\documentclass[10pt]{amsart}
\usepackage[utf8]{inputenc}
\usepackage{bm}
\usepackage{fontenc}
\usepackage{amsfonts}
\usepackage{amssymb}
\usepackage{amsmath}
\usepackage{amsthm}\usepackage{mathtools}
\usepackage{enumerate}
\usepackage{hyperref}\usepackage{enumitem}
\usepackage{mathrsfs}
\usepackage{tikz}\usetikzlibrary{calc}
\usepackage{marginnote}
\usepackage{xcolor,enumitem}
\usepackage{soul}\usepackage{tikz}
\usepackage[all,cmtip]{xy}
\usepackage{tikz,tikz-cd}

% macros

%% shorthand

\newcommand{\N}{\mathbb{N}}

\newcommand{\cY}{\mathcal{Y}}
\newcommand{\cX}{\mathcal{X}}

%% bb sets, etc

\newcommand{\NN}{\mathbb{N}}

\newcommand{\cstu}{\mathrm{C}^*_u}

\newtheorem*{rigprob*}{Rigidity Problem for uniform Roe Algebras}
\newtheorem*{rigprobcorona*}{Rigidity Problem for uniform Roe Coronas}

%% mathscripty

\newcommand{\cWEP}{\mathsf{cwEP}}

\newcommand{\cstar}{$\mathrm{C}^*$}

\newcommand{\bbN}{\mathbb{N}}

%% mathfrak

%% cardinal invariants of the continuum

\DeclareMathOperator{\OCA}{{\mathsf {OCA}}}
\DeclareMathOperator{\PFA}{{\mathsf {PFA}}}
\DeclareMathOperator{\WEP}{{\mathsf {wEP}}}

\DeclareMathOperator{\MA}{{\mathsf {MA}_{\aleph_1}}}

\DeclareMathOperator{\ZFC}{{\mathsf {ZFC}}}

%% ZFC, etc

\newcommand{\CH}{\mathsf{CH}}

\newtheorem{theorem}{Theorem} [section]
\newtheorem*{theorem*}{Theorem}
\newtheorem{prop}[theorem]{Proposition}

\newtheorem*{proposition*}{Proposition}
\newtheorem{lemma}[theorem]{Lemma}
\newtheorem*{lemma*}{Lemma}
\newtheorem{corollary}[theorem]{Corollary}
\newtheorem*{corollary*}{Corollar}

\newtheorem*{fact*}{Fact}
\theoremstyle{definition}
\newtheorem{definition}[theorem]{Definition}
\newtheorem*{definition*}{Definition}
\newtheorem{claim}[theorem]{Claim}
\newtheorem*{claim*}{Claim}

\newtheorem*{conjecture*}{Conjecture}

\newtheorem{question}[theorem]{Question}

\theoremstyle{remark}
\newtheorem{example}[theorem]{Example}
\newtheorem*{example*}{Example}
\newtheorem{remark}[theorem]{Remark}
\newtheorem*{remark*}{Remark}

\newtheorem*{note*}{Note}
\newtheorem*{question*}{Question}

%% delimiters

\newcommand{\norm}[1]{\left\lVert #1 \right\rVert}

%% operators

\DeclareMathOperator{\supp}{supp}

\DeclareMathOperator{\Fin}{Fin}

\newcounter{my_enumerate_counter}
\newcommand{\pushcounter}{\setcounter{my_enumerate_counter}{\value{enumi}}}
\newcommand{\popcounter}{\setcounter{enumi}{\value{my_enumerate_counter}}}

\usepackage{enumitem}

\title{Rigidity of Higson coronas}

\author{Alessandro Vignati}
\address[AV]{
 Institut de Math\'ematiques de Jussieu - Paris Rive Gauche (IMJ-PRG)\\
 Universit\'e Paris Cit\'e\\ Institut Universitaire de France\\
 B\^atiment Sophie Germain\\
 8 Place Aur\'elie Nemours \\ 75013 Paris, France}
\email{ale.vignati@gmail.com}
\urladdr{http://www.automorph.net/avignati}

\date{\today}

\begin{document}
\maketitle

\begin{abstract}
We show that under mild set theoretic hypotheses we have rigidity for algebras of continuous functions over Higson coronas, topological spaces arising in coarse geometry. In particular, we show that under $\mathsf{OCA}$ and $\mathsf {MA}_{\aleph_1}$, if two uniformly locally finite metric spaces $X$ and $Y$ have homeomorphic Higson coronas $\nu X$ and $\nu Y$, then $X$ and $Y$ are coarsely equivalent, a statement which provably does not follow from $\mathsf{ZFC}$ alone.
\end{abstract}

\section{Introduction}

The Higson compactification and corona were introduced by Higson in the late '80s in connection with a $K$-theoretic analysis of index theorems for noncompact Riemannian manifolds, even though they appeared in published form slightly later, in \cite{Higson.ExtDuality} and \cite[\S5]{Roe.CoarseCoh}, where they have been generalized to locally compact metric spaces. These topological spaces have in coarse geometry a similar role of that of the \v{C}ech--Stone compactification and remainder in topology, as they code certain asymptotic behaviour of the metric spaces of interest.

Higson coronas found profound applications within and outside coarse geometry: for example, their cohomology theory is key in the study of hypereuclidean manifolds (\cite{Roe.CoarseCoh} or \cite{Drani.HigsonRoe}) and they have been used in the study of asymptotic dimension (\cite{Drani.AsymDim}) and in applications to the Baum--Connes conjecture (\cite{FukayaOguni}). Their stable versions, introduced by Emerson and Meyer in \cite{EmersonMeyer.Dual}, were used for a Dirac-dual-Dirac approach to the Baum--Connes conjecture (\cite{EmersonMeyer.Dirac}), and are heavily present in the work of Willett for their homological properties, in relation with the Novikov conjecture (\cite{Willett.Thesis,Willett.Homological}).

Even though the definitions of Higson compactification and corona were given in the general locally compact setting, here we focus on discrete bounded geometry metric spaces, meaning that all spaces of interest are \emph{uniformly locally finite} (u.l.f.\ from now on) i.e., bounded balls of a given radius are uniformly bounded in size. Typical examples of u.l.f.\ spaces that are important for applications are finitely generated groups with word metrics, and discretizations of non-discrete spaces such as Riemannian manifolds.

\begin{definition}
Let $(X,d)$ be a u.l.f.\ metric space. A bounded function $f\colon X\to\mathbb C$ is \emph{slowly oscillating}\footnote{These are also called Higson functions, or functions of bounded variation.} if for every $\varepsilon>0$ and $R>0$ there is a finite set $F\subseteq X$ such that $|f(x)-f(x')|<\varepsilon$ whenever $x$ and $x'$ are points in $X\setminus F$ with $d(x,x')\leq R$. 

The algebra of slowly oscillating functions, denoted $C_h(X)$, is a unital \cstar-subalgebra of $\ell_\infty(X)$, and we call its spectrum $hX$ the \emph{Higson compactification} of $X$. The \emph{Higson corona}, $\nu X=hX\setminus X$, is the spectrum of the quotient algebra $C_\nu(X)=C_h(X)/C_0(X)$.
\end{definition}
The association from metric spaces to Higson coronas is functorial, in the setting of coarse categories. This is formalised by the following, which is \cite[Proposition 2.41]{RoeBook}. (For the appropriate terminology for maps in coarse geometry, see Definition~\ref{def:maps}.)
\begin{prop}\label{prop:roetrivial}
Let $X$ and $Y$ be u.l.f. spaces. A coarse proper map $\varphi\colon Y\to X$ gives a continuous map $\nu\varphi\colon\nu Y\to\nu X$. If $\varphi,\psi\colon Y\to X$ are close, then $\nu\varphi=\nu\psi$. Moreover, if $\varphi$ is a coarse embedding, $\nu\varphi$ is injective and if $\varphi$ is a coarse equivalence, $\nu\varphi$ is a homeomorphism.
\end{prop}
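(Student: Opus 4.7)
The plan is to work on the $\cst$-algebraic side via Gelfand duality: pull functions back along $\varphi$ via $\varphi^*(f)=f\circ\varphi$, and read off all four claims as properties of $\varphi^*$. The first task is to check that $\varphi^*$ maps $C_h(X)$ into $C_h(Y)$. Given $f\in C_h(X)$ and $\varepsilon,R>0$, coarseness of $\varphi$ produces $S$ so that $d_Y(y,y')\leq R$ forces $d_X(\varphi(y),\varphi(y'))\leq S$; a finite set $F\subseteq X$ witnessing slow oscillation of $f$ at scale $(S,\varepsilon)$ then pulls back to $\varphi^{-1}(F)$, which is finite by properness and witnesses slow oscillation of $f\circ\varphi$ at scale $(R,\varepsilon)$. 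The same reasoning shows $\varphi^*(C_0(X))\subseteq C_0(Y)$, so $\varphi^*$ descends to a unital $*$-homomorphism $\bar\varphi^*\colon C_\nu(X)\to C_\nu(Y)$, and Gelfand duality produces the continuous $\nu\varphi\colon\nu Y\to\nu X$.

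For the closeness statement, set $R:=\sup_y d_X(\varphi(y),\psi(y))$ and, given $f\in C_h(X)$ and $\varepsilon>0$, take $F\subseteq X$ witnessing slow oscillation of $f$ at scale $(R,\varepsilon)$. Outside the finite set $\varphi^{-1}(F)\cup\psi^{-1}(F)$ one has $|f\circ\varphi-f\circ\psi|<\varepsilon$, so $\varphi^*f-\psi^*f\in C_0(Y)$ and $\nu\varphi=\nu\psi$. The coarse-equivalence assertion then follows formally: if $\alpha\colon X\to Y$ is a coarse inverse of $\varphi$, then $\alpha\circ\varphi$ and $\varphi\circ\alpha$ are close to the respective identities, and since the assignment $\varphi\mapsto\nu\varphi$ is covariantly functorial (via the contravariant functoriality of $\varphi\mapsto\varphi^*$), one concludes $\nu\alpha\circ\nu\varphi=\id_{\nu Y}$ and $\nu\varphi\circ\nu\alpha=\id_{\nu X}$, so $\nu\varphi$ is a homeomorphism.

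Injectivity of $\nu\varphi$ under the coarse-embedding hypothesis is the main step. It corresponds to $\bar\varphi^*$ having dense range, hence---since $*$-homomorphisms between $\cst$-algebras have closed range---to surjectivity of $\bar\varphi^*$: for every $g\in C_h(Y)$ I must produce $f\in C_h(X)$ with $f\circ\varphi-g\in C_0(Y)$. I would build $f$ by nearest-point extension tempered by a Lipschitz cutoff: for each $x\in X$ pick $y_x\in Y$ approximately minimizing $d_X(\varphi(y),x)$, and set
\[
f(x)=g(y_x)\cdot\max\!\bigl(1-d_X(x,\varphi(Y))/M,\ 0\bigr)
\]
for a sufficiently large constant $M$. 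That $f\circ\varphi-g\in C_0(Y)$ follows from the lower expansion estimate $\rho_-$ of the coarse embedding, which uniformly bounds $d_Y(y,y_{\varphi(y)})$ and then lets slow oscillation of $g$ close out the argument. The main obstacle is verifying slow oscillation of $f$ itself: for $x,x'$ with $d_X(x,x')\leq R$ both lying in the $M$-neighborhood of $\varphi(Y)$, the coarse embedding controls $d_Y(y_x,y_{x'})$ so slow oscillation of $g$ transfers; in the far or mixed cases the Lipschitz cutoff contributes at most $R\|g\|_\infty/M<\varepsilon$ once $M$ is large. Finally u.l.f.\ of $X$ is needed to check that the finite exceptional set for $g$ in $Y$ pulls back along $x\mapsto y_x$ to a finite subset of $X$, as it is contained in the $(M+1)$-neighborhood of the image of a finite subset of $Y$.
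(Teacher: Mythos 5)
The paper does not prove this proposition --- it quotes it verbatim as \cite[Proposition~2.41]{RoeBook} --- so there is no internal proof to compare against; I will assess your argument on its own. Your handling of the first three assertions is correct and clean: $\varphi^*$ maps $C_h(X)$ into $C_h(Y)$ and $C_0(X)$ into $C_0(Y)$ by combining coarseness with properness, so it descends to a unital $*$-homomorphism with Gelfand dual $\nu\varphi$; closeness gives $\varphi^*f-\psi^*f\in C_0(Y)$; and the coarse-equivalence case follows formally from closeness and functoriality. The Gelfand reduction of injectivity of $\nu\varphi$ to surjectivity of $\bar\varphi^*$ is also right.

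The gap is in the extension step, and it is not a small one. Your estimate for the slow oscillation of $f=g(y_{\cdot})\,c$ with $c(x)=\max(0,1-d_X(x,\varphi(Y))/M)$ amounts to
\[
|f(x)-f(x')|\leq |g(y_x)-g(y_{x'})|+\|g\|_\infty\,|c(x)-c(x')|\leq |g(y_x)-g(y_{x'})|+\frac{\|g\|_\infty R}{M}
\]
for $d_X(x,x')\leq R$, and you correctly control the first term outside a finite set. But the second term $\|g\|_\infty R/M$ is a constant that does not shrink when you discard a finite set, and $M$ is fixed once and for all in the definition of $f$, while slow oscillation quantifies over \emph{every} $R>0$. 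So the argument proves slow oscillation only at scales $R<M\varepsilon/\|g\|_\infty$; the phrase ``once $M$ is large'' conceals a dependence of $M$ on $R$ and $\varepsilon$ that $f$ cannot have. The failure is genuine: take $g\equiv 1$, so $f=c$; if $\varphi(Y)$ is not cobounded and, say, $X$ is coarsely geodesic, the $1$-Lipschitz function $d_X(\cdot,\varphi(Y))$ passes through all values in $[0,M]$ along paths leaving the $M$-neighbourhood of $\varphi(Y)$ at arbitrarily large distance from the basepoint, so $c$ oscillates by $1$ at scale $M$ arbitrarily far out and $c\notin C_h(X)$. Moreover the obstruction is structural: any slowly oscillating $c$ with $c\equiv 1$ on $\varphi(Y)$ must, by slow oscillation applied at larger and larger scales, remain close to $1$ on arbitrarily thick neighbourhoods of $\varphi(Y)$ far out, and on those neighbourhoods $x\mapsto y_x$ is no longer controlled by the expansion of $\varphi$. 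A correct construction therefore has to work at multiple scales at once --- e.g.\ let the cutoff width $M_n\to\infty$ on annuli $\{x:s_n\leq d_X(x,x_0)<s_{n+1}\}$ but keep $M_n\leq\rho_-(n)/4$, so the cutoff Lipschitz constant $1/M_n\to 0$ while nearest-point selections in the $M_n$-neighbourhood of $\varphi(Y)$ at stage $n$ are still within $n$ of each other in $Y$, which slow oscillation of $g$ then handles. This multi-scale bookkeeping is precisely what your proposal leaves open.
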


In these notes, we focus on the converse problem.
\begin{question}\label{Q.main}
How much of the coarse geometry of u.l.f.\ metric spaces is remembered by Higson coronas? Specifically, for $X$ and $Y$ two u.l.f. metric spaces, if $C_{\nu}(X)$ is isomorphic to $C_{\nu}(Y)$, are $X$ and $Y$ coarsely equivalent?
\end{question}

While this question was studied for general locally compact metric spaces when one considers the $C_0$-coarse structure, all u.l.f.\ metric spaces have coarsely equivalent $C_0$ coarse structures, therefore the results of \cite{MineYamashita} do not apply here. In fact, no (even partial) positive answer to the second part of Question~\ref{Q.main} has been given so far. 

Question~\ref{Q.main} follows the pattern of \emph{rigidity problems for Roe-like algebras}. Named after John Roe who introduced prototypical versions of them in \cite{Roe:1988qy} for index theoretic purposes, Roe-like algebras are \cstar-algebras associated to metric spaces capable of catching algebraically some of the coarse geometric properties of metric spaces. These \cstar-algebras found applications in many different areas within and at the boundary of mathematics; we refer the reader to the introduction of \cite{SquareInventiones} for a thorough introduction on these objects and their applications. 

Example of Roe-like algebras are the uniform Roe algebra of $X$, $\cstu(X)$, the Roe algebra of $X$, $\mathrm{C}^*(X)$, the uniform Roe corona of $X$, denoted by $Q_u(X)$ and defined as the quotient of the uniform Roe algebra by the ideal of compact operators, and of course the algebra of slowly oscillating functions $C_h(X)$ and its quotient $C_\nu(X)$. The rigidity problem for Roe-like algebras essentially asks how much geometry is remembered by these \cstar-algebras. In recent year there have been important developments on these problems. Notably:
\begin{itemize}
\item It is easy to check that if $C_h(X)$ and $C_h(Y)$ are isomorphic, $X$ and $Y$ are bijectively coarsely equivalent\footnote{This is true even in the non-discrete setting, see \cite[\S4]{AlvarezCandel}.}. The reason for this is that an isomorphism between algebras of slowly oscillating functions necessarily maps minimal projections to minimal projections.
\item Following up work on Property A spaces of \v{S}pakula and Willett (\cite{SpakulaWillett2013}), it was shown in \cite{SquareInventiones} that if two u.l.f.\ metric space have isomorphic uniform Roe algebras, then the spaces are coarsely equivalent. (Indeed, it is conjectured that isomorphism of uniform Roe algebras gives bijective coarse equivalence between the underlying spaces. This conjecture has been verified for a large class of metric space, see the introduction of \cite{SquareBij}).
\item In the recent \cite{MartinezVigolo}, Martinez and Vigolo showed that if two Roe algebras are isomorphic, their underlying spaces are coarsely equivalent.
\end{itemize}

When dealing with quotient structures such as uniform Roe or Higson coronas set theory enters play. This should not be surprising, as often the behaviour of homomorphisms between massive quotient structure depends on the set theoretic ambient. This area of mathematics was developed following Shelah's proof (\cite{Sh:PIF}) that consistently with $\ZFC$ all automorphisms of the Boolean algebra $\mathcal P(\bbN)/\Fin$ are trivial, while the same statement is false if the Continuum Hypothesis $\CH$ is assumed (\cite{Ru}). Shelah's argument was generalised (\cite{ShSte:PFA,Ve:OCA,TrivIso}) to show that triviality of automorphisms of $\mathcal P(\bbN)/\Fin$ is a consequence of mild set theoretic hypotheses such as Todor\v{c}evi\`c's Open Colouring Axiom $\OCA$ (a combinatorial axiom \`a la Ramsey) and Martin's Axiom at level $\aleph_1$, $\MA$ (a generalisation of Baire category theorem to the uncountable). (For a brief introduction to these axioms see \S\ref{ss.ST}). These proofs were adapted to a variety of quotient structures (rings, Boolean algebras, ...) in the discrete setting, and the results of Farah (\cite{Fa:All}) and Phillips and Weaver (\cite{Phillips-Weaver}) on automorphisms of the Calkin algebra brought these ideas to the setting of \cstar-algebras and their quotients. We refer the reader to the long survey \cite{CoronaRigidity} for a nice overview of this exciting area of mathematics.

The analysis of massive quotient structures from the perspective of set theory was applied to Roe-like algebras. In \cite{BragaFarahV.Roecoronas} (see also \cite[\S3.4]{SquareInventiones}) it was shown that the uniform Roe corona remembers the geometry of the spaces of interest, once again if the axioms $\OCA$ and $\MA$ are assumed. On another direction, recent work of Brian and Farah (\cite{BrianFarah}) gives that $\CH$ implies the existence of $2^{\aleph_0}$ mutually non-coarsely equivalent u.l.f.\ metric spaces with isomorphic uniform Roe coronas, meaning in this setting the geometry is not remembered by the algebra of interest. These spaces have asymptotic dimension one\footnote{Here we refer to the asymptotic dimension in the sense of Gromov; this is the appropriate notion of dimension in the coarse setting, see \cite{Gromov93}. Relevant for us is that the asymptotic dimension of a metric space is remembered by the Higson corona, see \cite{Drani.AsymDim}}. When focusing on $C_\nu(X)$ (and therefore dually on Question~\ref{Q.main}), it has been noted in \cite{Protasov.Corona} that, again under $\CH$, all asymptotic dimension zero spaces have isomorphic Higson coronas\footnote{In retrospect, this is a fairly easy consequence of Parovi\v{c}enko's Theorem, see \cite{Pa:Universal}, see Example~\ref{example:nontrivial} for more details.}. Furthermore, since the \cstar-algebra $C_\nu(X)$ can be identified as the center of $Q_u^*(X)$ (\cite[Proposition 3.6]{SquareEmb}), the result of Brian and Farah gives, under $\CH$, a continuum of mutually non coarsely equivalent u.l.f.\ metric spaces of asymptotic dimension one whose Higson coronas are homeomorphic. Therefore, under $\CH$, Question~\ref{Q.main} has a negative answer.

The main result of this article proves that Question~\ref{Q.main} can have a positive answer.

\begin{theorem}\label{thm:main}
Assume $\OCA$ and $\MA$. Let $X$ and $Y$ be u.l.f.\ metric spaces. If $X$ and $Y$ have homeomorphic Higson coronas, then $X$ and $Y$ are coarsely equivalent.
\end{theorem}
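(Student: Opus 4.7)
The plan is to show, under $\OCA+\MA$, that any $*$-isomorphism $\Phi\colon C_\nu(X)\to C_\nu(Y)$ (dual to the given homeomorphism $\Lambda\colon\nu Y\to\nu X$) is \emph{topologically trivial}, meaning that there is a bijection $\varphi\colon Y'\to X'$ between cofinite subsets $Y'\subseteq Y$ and $X'\subseteq X$ such that $\Phi(f+C_0(X)) = (f\circ\varphi)+C_0(Y)$ for every $f\in C_h(X)$. Once this is established, Proposition~\ref{prop:roetrivial} delivers the theorem as soon as one checks that $\varphi$ is a coarse equivalence, and this last check uses only preservation of slow oscillation.

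I would first analyse the restriction of $\Phi$ to the Boolean algebra of projections of $C_\nu(X)$, which is the lattice $\mathcal H(X)/\Fin$ of Higson subsets of $X$ modulo finite differences (where $\mathcal H(X)$ denotes the family of $A\subseteq X$ with $\chi_A\in C_h(X)$). Following the template of proofs of triviality of automorphisms of $\mathcal P(\bbN)/\Fin$ (\cite{Sh:PIF,Ve:OCA,TrivIso}) and their \cstar-algebraic counterparts (\cite{Fa:All}), I would use $\OCA$ to produce topologically definable partial liftings of the Boolean isomorphism on countable subalgebras, and $\MA$ to amalgamate them into a single bijection $\varphi\colon Y'\to X'$ inducing the Boolean isomorphism modulo finite differences. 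Extending the lift from projections to all of $C_h(X)$ is nontrivial when $\nu X$ is not totally disconnected, because $C_\nu(X)$ is not then generated by projections and the Boolean data alone do not determine $\Phi$; one expects to handle this by running the lifting argument directly on slowly oscillating Lipschitz functions (which do generate $C_h(X)$ in norm) and using $\MA$ to coherently glue the two levels of data.

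Coarseness of $\varphi$ is then essentially automatic. If $\varphi$ is not bornologous, there are $R>0$ and infinitely many pairs $(y_n,y_n')$ with $d(y_n,y_n')\leq R$ and $d(\varphi(y_n),\varphi(y_n'))\to\infty$. One constructs a slowly oscillating function $f=\sum_n g_n$ on $X$, where each $g_n$ is a Lipschitz bump separating $\varphi(y_n)$ from $\varphi(y_n')$: the Lipschitz constant of $g_n$ may be taken to be $1/d(\varphi(y_n),\varphi(y_n'))\to 0$, and after thinning the sequence (using u.l.f., so that bounded balls are finite) the supports may be made pairwise coarsely disjoint, hence $f$ is slowly oscillating. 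The pullback $f\circ\varphi$ takes value approximately $1$ at $y_n$ and $0$ at $y_n'$, oscillating by roughly $1$ on pairs at distance at most $R$, contradicting the fact that $\Phi$ preserves the Higson algebra. A symmetric argument applied to $\Phi^{-1}$ yields bornologousness of $\varphi^{-1}$, so $\varphi$ is a coarse equivalence.

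The main obstacle is the trivialisation step. The $\OCA+\MA$ arguments for $\mathcal P(\bbN)/\Fin$ and its quotient-algebra cousins rely on the abundance of almost disjoint, splitting, and independent families freely constructed in the full power set. In $\mathcal H(X)/\Fin$ these combinatorial objects must be realised by Higson subsets, i.e., by sets with finite $R$-boundary for every $R$, which places a strong coarse-geometric constraint on the admissible combinatorics. Developing the Higson analogues of these classical tools for arbitrary u.l.f. spaces, and handling the extension from the Boolean level to the full \cstar-algebra when $\nu X$ is not totally disconnected, is where the bulk of the technical work is expected to lie.
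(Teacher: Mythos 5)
Your overall architecture is right at a coarse level (first prove triviality of the isomorphism $\Phi\colon C_\nu(X)\to C_\nu(Y)$ under $\OCA+\MA$, then extract a coarse equivalence from the inducing map $\varphi$), and your coarseness check at the end is essentially Proposition~\ref{prop:trivialhoms}. But the trivialisation step, which you yourself flag as the main obstacle, is where the proposal has a genuine gap, and the strategy you sketch for filling it would not work in general. You propose to first lift the Boolean isomorphism on the projection lattice $\mathcal H(X)/\Fin$ and then ``glue the two levels of data'' using $\MA$. The problem is not just that this is hard: when $\nu X$ has positive covering dimension (i.e.\ $X$ has positive asymptotic dimension), $C_\nu(X)$ is not generated by its projections, the Boolean algebra $\mathcal H(X)/\Fin$ can be far too small to determine $\Phi$, and there is no reason a lift of the Boolean reduct should extend to $C_\nu(X)$. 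Moreover you seek a \emph{bijection} $\varphi\colon Y'\to X'$ between cofinite subsets inducing $\Phi$ pointwise; the quotient $C_\nu$ has no minimal projections, so there is nothing forcing a bijective lift (bijectivity is special to the compactification algebras $C_h$, as the introduction notes), and aiming for it makes the problem strictly harder than it needs to be.

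The paper's resolution is structurally different and avoids projections entirely. It fixes two overlapping sparse decompositions $\bar X^{\mathrm e},\bar X^{\mathrm o}$ of $X$ (Definition~\ref{def:sparsesequences}), builds from them positive order-zero contractions $\sigma_i\colon\ell_\infty\to\ell_\infty(X)$ carrying bump functions, and applies the order-zero lifting theorem of \cite{mckenney2018forcing,vignati2018rigidity} (Theorem~\ref{thm:liftcpc}) to $\Phi\circ\sigma_i$ to manufacture a sequence of finite sets $Y_n^i\subseteq Y$ and a clopen $U\subseteq\nu Y$. After this, the restriction of $\Phi$ to functions supported on $\bigcup_n X_n^i$ is a \emph{coordinate-fixing} map between reduced products of finite metric spaces, and the metric lifting theorem of \cite{TrivIsoMetric} (Theorem~\ref{thm:liftben}) supplies product-form lifts on each stratum $M_f$, $f\in\N^{\N\uparrow}$, of the filtration $M_{n,m}$; a separate $\OCA$ colouring over $\N^{\N\uparrow}$ uniformises these into a single product-form lift (Theorem~\ref{thm:local}). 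The $\ZFC$ local-to-global theorem (Theorem~\ref{thm:localtoglobal}) then converts product form on $\bar X^{\mathrm e}$ and $\bar X^{\mathrm o}$ into triviality, via the marker functions $g_{x,m}$. None of this is a variant of the projection-lattice argument: the sparse sequences, the order-zero maps, the filtration $M_{n,m}$, and the reduced-product lifting theorem are the specific ingredients your proposal is missing, and without them I don't see how your ``lift on projections then extend'' plan can be pushed through outside asymptotic dimension zero.
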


To analyse continuous maps between two Higson coronas $\nu Y\to \nu X$, and ultimately prove Theorem~\ref{thm:main}, we study the structure of unital $^*$-homomorphisms $C_\nu(X)\to C_\nu(Y)$. We introduce a notion of triviality for these $^*$-homomorphisms (Definition~\ref{def:trivial}) which isolates those $^*$-homomorphisms arising from a coarse proper map as in Proposition~\ref{prop:roetrivial}. The idea is then to prove, under suitable axioms, that all isomorphisms between algebras of the form $C_\nu(X)$ are trivial. In fact, we do better: we isolate a principle which completely describes the structure of unital $^*$-homomorphisms between algebras of the form $C_\nu(X)$. This principle, which we call the \emph{coarse weak Extension Principle}, denoted $\cWEP$ and defined in Definition~\ref{def:cwep}, is the coarse version of the weak Extension Principle formulated by Farah (\cite{FarahBook2000}) for \v{C}ech--Stone remainders of zero-dimensional locally compact topological spaces and extended to locally compact spaces of arbitrary topological dimension in \cite{VY:wep}. 

The main technical content of this article goes in proving that this coarse weak Extension Principles holds under $\OCA$ and $\MA$. The proof uses powerful lifting results under $\OCA$ and $\MA$ obtained in \cite{mckenney2018forcing} and \cite{TrivIsoMetric}. As a byproduct, we get the following result characterising embeddings between Higson coronas. Recall that, for a u.l.f.\ metric space $Y$, if $U\subseteq\nu Y$ is clopen then there is a subspace $\tilde U\subseteq Y$ such that the the projection $\chi_{\tilde U}\in\ell_\infty(Y)$ is slowly oscillating and the image of $\chi_{\tilde U}$ in $\ell_\infty(Y)/C_0(Y)$ corresponds to the characteristic function on $U$ in $C_\nu(Y)$.

\begin{theorem}\label{thm:main2}
Assume $\OCA$ and $\MA$. Suppose that $\tilde\varphi\colon \nu Y\to \nu X$ is a continuous injection. Then there is $\tilde U\subseteq Y$ which gives a clopen $U\subseteq\nu Y$ such that there is a coarse embedding $\varphi\colon \tilde U\to X$ with $\nu\varphi=\tilde\varphi\restriction U$, and $\tilde\varphi[\nu Y\setminus U]$ is nowhere dense.
\end{theorem}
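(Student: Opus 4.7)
The plan is to dualize the continuous injection $\tilde\varphi$ to a surjective unital $^*$-homomorphism $\Phi\colon C_\nu(X)\to C_\nu(Y)$ and then to apply the coarse weak Extension Principle $\cWEP$ to $\Phi$. Since $\tilde\varphi$ is a continuous injection between compact Hausdorff spaces, $\tilde\varphi[\nu Y]$ is closed in $\nu X$ and $\tilde\varphi$ is a homeomorphism onto its image, so the pullback $\Phi(f):=f\circ\tilde\varphi$ is indeed a surjective unital $^*$-homomorphism. Under $\OCA+\MA$, $\cWEP$ describes the structure of all such $^*$-homomorphisms; once it is granted, Theorem~\ref{thm:main2} is essentially a translation statement, and the injectivity of $\tilde\varphi$ is what promotes the generic conclusions of $\cWEP$ to the specific ones about coarse embeddings and nowhere dense complements.

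Unpacking $\cWEP$ applied to $\Phi$ (Definition~\ref{def:cwep}), I would extract a subset $\tilde U \subseteq Y$ whose characteristic function is slowly oscillating---yielding the desired clopen $U \subseteq \nu Y$---together with a coarse proper map $\varphi\colon \tilde U \to X$ such that $\nu\varphi = \tilde\varphi\restriction U$. The complementary clause of $\cWEP$ controls the behavior of $\Phi$ on the clopen complement of $U$ by forcing the $^*$-homomorphism there to factor through functions with small support (in the coarse sense), which should directly yield that the image $\tilde\varphi[\nu Y \setminus U]$ has empty interior in $\nu X$: $\tilde\varphi[U]=\nu\varphi[\nu\tilde U]$ already accounts for the essential part of the image, and injectivity of $\tilde\varphi$ forbids the remainder from filling any open set that is not absorbed by this bulk.

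To upgrade $\varphi$ from a coarse proper map to a coarse embedding I would use the injectivity of $\tilde\varphi$, which forces $\nu\varphi$ to be injective on $U$. This is the converse of the last clause of Proposition~\ref{prop:roetrivial}: if $(x_n)$ and $(x'_n)$ were two sequences in $\tilde U$ escaping to infinity with unbounded pairwise distance but with $d(\varphi(x_n),\varphi(x'_n))$ bounded, one can build slowly oscillating functions on $\tilde U$ separating them to produce distinct cluster points $p\neq p'$ in $\nu\tilde U$, while $\nu\varphi(p)=\nu\varphi(p')$ in $\nu X$, contradicting injectivity. This separation step is a standard u.l.f.\ argument using partitions of unity subordinate to a large-scale cover.

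The main obstacle is of course establishing $\cWEP$ itself under $\OCA+\MA$, which is the bulk of the technical content of the article and relies on the lifting machinery of \cite{mckenney2018forcing} and \cite{TrivIsoMetric}. Granting $\cWEP$, the remaining work is the bookkeeping above, where the delicate point is not the extraction of $\tilde U$ and $\varphi$ but the passage from injectivity on the clopen $U$ to the two geometric consequences---that $\varphi$ is a coarse embedding and that the leftover image $\tilde\varphi[\nu Y\setminus U]$ is nowhere dense---both of which should be short arguments once the correct formulation of the complementary clause of $\cWEP$ is isolated.
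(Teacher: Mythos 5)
Your proposal takes essentially the same route as the paper: dualize the continuous injection $\tilde\varphi$ to a surjective unital $^*$-homomorphism $\Phi\colon C_\nu(X)\to C_\nu(Y)$, invoke $\cWEP(s)$ (Theorem~\ref{thm:OCAwep}) to extract the clopen $U$ with $\tilde\varphi[\nu Y\setminus U]$ nowhere dense (note this is stated directly in Definition~\ref{def:cwep}, not derived from a separate ``complementary clause'') together with a trivial $\chi_U\Phi\chi_U$, and conclude that $\varphi$ is a coarse embedding. The only difference is in the last step, where the paper is more economical than your cluster-point separation argument: since $\Phi$ is surjective so is $\chi_U\Phi\chi_U$, and Proposition~\ref{prop:trivialhoms}\eqref{trivialc1}--\eqref{trivialc2} already records that a trivial surjective $^*$-homomorphism is induced by a coarse, proper and expanding map, which is exactly the dual packaging of the sequence argument you sketch.
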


To prove Theorems~\ref{thm:main} and \ref{thm:main2}, and in general versions of the $\cWEP$, we use two main ingredients: first, we give a notion of local triviality for a unital $^*$-homomorphism $\Phi\colon C_\nu(X)\to C_\nu(X)$, and show that if $\Phi$ is surjective and locally trivial, then $\Phi$ is trivial. No set theory is involved here, i.e., this statement holds in $\ZFC$. Secondly, we show that indeed assuming $\OCA$ and $\MA$ all $^*$-homomorphisms between algebras of functions over Higson coronas are locally trivial.

The paper is structured as follows: in \S\ref{S.Triv}, after recalling some basic definitions, we introduce our main object of interest, trivial $^*$-homomorphisms for algebras of functions over Higson coronas, and prove a few properties about them. We also give the appropriate definition of the coarse weak Extension Principle. In \S\ref{S.localtoglobal} we give the notion of local triviality, and show that all locally trivial surjective $^*$-homomorphisms are trivial, while in \S\ref{S.locallifts} we prove how local triviality follows from our set theoretic hypotheses, after having set up the appropriate context for the lifting results of \cite{mckenney2018forcing} and \cite{TrivIsoMetric} to be applied. %\S\ref{S.Concl} contains some concluding remarks.

\subsection{Set theoretic preliminaries}\label{ss.ST}
In these notes, we use Todor\v cevi\'c's Open Colouring Axiom $\OCA$ and Martin's Axiom $\MA$. The combination of these axioms is consistent with the usual theory $\ZFC$, the Zermelo--Fraenkel theory together with the Axiom of Choice, without having to assume any large cardinal hypotheses\footnote{Indeed, every model of $\ZFC$ has a forcing extension which has the same $\omega_1$ and satisfies $\OCA$ and $\MA_{\aleph_1}$, see the sketch contained in \cite[\S2]{Velickovic.OCA1} resembling a technique already used in \cite{ARS}.}.

$\OCA$ was formalised in its current form by Todor\v cevi\'c in \cite{Todorcevic.PPIT}. The current version of this axiom has roots in the work of Baumgartner (\cite{Baum.Aleph1}), and it is a modification of several colouring axioms appearing in work of Abraham, Rubin, and Shelah (\cite{ARS}). As $\OCA$ will be used directly in the proof of Theorem~\ref{thm:local}: $\OCA$ is the assertion that for every separably metrizable space $\cX$ and an open $K_0\subseteq [\cX]^2$ either there exists an uncountable $\cY\subseteq \cX$ such that $[\cY]^2\subseteq K_0$ or there is a partition $\cX=\bigcup_n \cX_n$ such that each $[\cX_n]^2$ is disjoint from $K_0$. $\OCA$ contradicts $\CH$, and in fact it implies that $\mathfrak b = \omega_2$, $\mathfrak b$ being the bounding number (this will be defined properly when needed, see Claim~\ref{claim:uniformising}.)

Martin's Axiom at level $\aleph_1$, $\MA$, is the straightforward generalisation of the Baire category theorem for spaces with the countable chain condition, asserting that $\aleph_1$ many dense sets must intersect. (As we do not directly use $\MA$ in these notes, we will not define it.) $\MA$ has profound applications in many areas of set theory, especially set theoretic topology. $\MA$ obviously contradicts $\CH$. 

Veli\v ckovi\'c proved in \cite{Ve:OCA} that combining $\OCA$ and $\MA$ one has that all automorphisms of $\mathcal P(\bbN)/\Fin$ are trivial (this was known to be true under $\PFA$, see \cite{ShSte:PFA}). While this result was recently proved to hold under just $\OCA$ (\cite{TrivIso}), $\OCA$ and $\MA$ (or weakenings of their conjunction) were the running assumptions for almost all lifting results for quotient structures obtained in the last three decades\footnote{We conjecture that $\MA$ is not needed for most such results, but at moment we still need $\MA$ for the lifting theorems proved in \cite{mckenney2018forcing} and \cite{TrivIsoMetric}, which we will use.} (for example \cite{Fa:All, FarahBook2000, vignati2018rigidity}, see \cite{CoronaRigidity} for more details).

\subsection*{Acknowledgements}
The author is thankful to the participant of the SQuaRE project `Expanders, ghosts, and Roe algebras' held at the American Institute of Mathematics (AIM) from 2021 to 2024. The author is also thankful to the AIM for the support of the aforementioned SQuaRE project. The author is partially funded by the Institut Universitaire de France.

\section{Trivial $^*$-homomorphisms and the coarse weak Extension Principle}\label{S.Triv}

The main objects of study of these notes are $^*$-homomorphisms between Higson coronas. These often arise from (coarse) geometry-preserving maps between the underlying spaces, as in Proposition~\ref{prop:roetrivial}. 

\begin{definition}\label{def:maps}
Let $(X,d_X)$ and $(Y,d_Y)$ be metric spaces. A function $\varphi\colon Y\to X$ is
\begin{itemize}
\item \emph{proper} if the inverse image of any bounded set is bounded;
\item \emph{coarse} if for every $R>0$ there is $S>0$ such that $d_Y(y,y')\leq R$ implies $d_X(\varphi(y),\varphi(y'))\leq S$ ($\varphi$ maps close points to close points);
\item \emph{expanding} if for every $S>0$ there is $R>0$ such that if $d_Y(y,y')\geq R$ then $d_X(\varphi(y),\varphi(y'))\geq S$ ($\varphi$ maps far points to far points);
\item \emph{of cobounded range} if there is $R>0$ such that for every $x\in X$ there is $y\in Y$ with $d_X(x,\varphi(y))\leq R$ (the image of $\varphi$ is a net in $X$).
\end{itemize}
A coarse expanding map is a \emph{coarse embedding}, and a coarse embedding with cobounded range is a \emph{coarse equivalence}.
\end{definition}

 In the locally finite setting, as bounded and finite sets coincide, a map is proper if and only if it is finite-to-one.

We are ready to define our notion of triviality. If $X$ is a set, we denote by $\pi_X$ the quotient map $\pi_X\colon\ell_\infty(X)\to \ell_\infty(X)/c_0(X)$.

\begin{definition}\label{def:trivial}
Let $X$ and $Y$ be u.l.f.\ metric spaces. A unital $^*$-homomorphism $\Phi\colon C_\nu(X)\to C_\nu(Y)$ is \emph{trivial} if there is a function $\varphi\colon Y\to X$ such that for every $A\subseteq Y$ and every positive contraction $g\in C_h(X)$, we have that 
\[
\norm{\pi_Y(\chi_{A})\Phi(\pi_X(g))}=\norm{\pi_X(\chi_{\varphi[A]}g)}.
\]
In this case, we say that $\varphi$ induces $\Phi$.
\end{definition}
All unital $^*$-homomorphisms between algebras of functions over Higson coronas constructed using a proper coarse map as in Proposition~\ref{prop:roetrivial} are trivial. The following simplifies checking whether a given $^*$-homomorphism is trivial.
\begin{lemma}\label{lem:trivialbetter}
Let $X$ and $Y$ be u.l.f.\ metric spaces, and let $\Phi\colon C_\nu(X)\to C_\nu(Y)$ be a unital $^*$-homomorphism. Suppose that $\varphi\colon Y\to X$ is a proper function such that for all positive contractions $g\in C_h(X)$ and $A\subseteq Y$, 
\[
\text{ if } \norm{\pi_X(\chi_{\varphi[A]}g)}=1 \text{ then } \norm{\pi_Y(\chi_{A})\Phi(\pi_X(g))}\neq 0.
\] 
Then $\Phi$ is trivial and $\varphi$ induces it.
\end{lemma}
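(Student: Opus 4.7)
The plan is to work Gelfand-dually throughout. Let $\tilde\Phi\colon \nu Y\to\nu X$ be the continuous map dual to $\Phi$, and let $p_X\colon \beta X\setminus X\to\nu X$ and $p_Y\colon \beta Y\setminus Y\to\nu Y$ be the continuous surjections dual to the inclusions $C_\nu(X)\hookrightarrow\ell_\infty(X)/c_0(X)$ and $C_\nu(Y)\hookrightarrow\ell_\infty(Y)/c_0(Y)$. Since $\varphi$ is proper, its Stone--\v{C}ech extension $\beta\varphi\colon\beta Y\to\beta X$ sends $\beta Y\setminus Y$ into $\beta X\setminus X$. For $A\subseteq Y$ write $A^*:=\overline{A}^{\beta Y}\cap(\beta Y\setminus Y)$, and set
\[
S_A:=p_X(\beta\varphi(A^*)) \qquad\text{and}\qquad T_A:=\tilde\Phi(p_Y(A^*)),
\]
both compact subsets of $\nu X$.

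First I would reformulate the two norms appearing in the definition of triviality. Using properness of $\varphi$ one checks $\beta\varphi(A^*)=\overline{\varphi[A]}^{\beta X}\cap(\beta X\setminus X)$, and since every $g\in C_h(X)$ extends to $\beta X$ as a function that factors as $g_\nu\circ p_X$ on $\beta X\setminus X$ (where $g_\nu\in C(\nu X)$ is the class of $g$ in $C_\nu(X)$), a direct computation yields
\[
\norm{\pi_X(\chi_{\varphi[A]}g)}=\sup_{v\in S_A}g_\nu(v) \qquad\text{and}\qquad \norm{\pi_Y(\chi_A)\Phi(\pi_X(g))}=\sup_{w\in T_A}g_\nu(w).
\]
Thus triviality of $\Phi$ via $\varphi$ amounts to the equality of these two suprema for every $A\subseteq Y$ and every positive contraction $g_\nu\in C(\nu X)$.

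The main obstacle is to upgrade the one-sided hypothesis to the set inclusion $S_A\subseteq T_A$ for every $A$; this is where Urysohn's lemma enters. Suppose for contradiction that some $v\in S_A\setminus T_A$. Since $\nu X$ is compact Hausdorff (hence normal) and $T_A$ is closed, Urysohn's lemma produces a positive contraction $g_\nu\in C(\nu X)$ with $g_\nu(v)=1$ and $g_\nu\equiv 0$ on $T_A$. Lifting $g_\nu$ to a positive contraction $g\in C_h(X)$ (possible since the quotient $C_h(X)\to C_\nu(X)$ is surjective and positive contractions lift) gives $\norm{\pi_X(\chi_{\varphi[A]}g)}=\sup_{S_A}g_\nu=1$, so the hypothesis forces $\norm{\pi_Y(\chi_A)\Phi(\pi_X(g))}=\sup_{T_A}g_\nu\neq 0$, contradicting $g_\nu|_{T_A}\equiv 0$.

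With $S_A\subseteq T_A$ in hand, I would then promote this to the pointwise identity $p_X\circ\beta\varphi=\tilde\Phi\circ p_Y$ on $\beta Y\setminus Y$. Fix $z\in\beta Y\setminus Y$ corresponding to an ultrafilter $\mathcal{U}_z$ on $Y$, so that $\{z\}=\bigcap_{A\in\mathcal{U}_z}A^*$ as a decreasing intersection of nonempty compact sets. Continuous maps between compact Hausdorff spaces commute with intersections of filtered families of compacta, so $\bigcap_{A\in\mathcal{U}_z}S_A=\{p_X(\beta\varphi(z))\}$ and $\bigcap_{A\in\mathcal{U}_z}T_A=\{\tilde\Phi(p_Y(z))\}$, and the inclusions $S_A\subseteq T_A$ force $p_X(\beta\varphi(z))=\tilde\Phi(p_Y(z))$ for every $z$. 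Substituting this identity back into the norm formulas from the second paragraph gives $\norm{\pi_Y(\chi_A)\Phi(\pi_X(g))}=\norm{\pi_X(\chi_{\varphi[A]}g)}$ for every $A$ and every positive contraction $g$, establishing that $\varphi$ induces $\Phi$.
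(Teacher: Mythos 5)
Your proof is correct, and it takes a genuinely different route from the paper. The paper argues by contradiction directly on the norms: setting $r=\norm{\pi_Y(\chi_A)\Phi(\pi_X(g))}$ and $s=\norm{\pi_X(\chi_{\varphi[A]}g)}$, the case $r<s$ is reduced to the hypothesis by functional calculus (producing $h$ with $\norm{\pi_X(\chi_{\varphi[A]}h)}=1$ and $\norm{\pi_Y(\chi_A)\Phi(\pi_X(h))}=0$), and the case $r>s$ is reduced to the first case by replacing $g$ with $1-g$ and shrinking $A$ to a suitable subset $B$. Your argument instead works entirely on the Gelfand spectra: you reformulate the two norms as $\sup_{S_A} g_\nu$ and $\sup_{T_A} g_\nu$, extract the inclusion $S_A\subseteq T_A$ from the hypothesis via Urysohn, and then upgrade this family of inclusions to the pointwise identity $p_X\circ\beta\varphi=\tilde\Phi\circ p_Y$ by using the fact that continuous maps between compact Hausdorff spaces commute with intersections of downward-directed families of compacta (applied to $\bigcap_{A\in\mathcal{U}_z}A^*=\{z\}$). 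This makes structurally transparent why the one-sided hypothesis forces the two-sided conclusion, at the cost of invoking the Stone--\v{C}ech apparatus and positive-contraction lifting, whereas the paper's proof is shorter and stays inside elementary C*-algebra manipulations. (Incidentally, the paper's text has a small sign typo in the second case --- it should read $\varepsilon=(r-s)/2$ and $\limsup_{y\in A}h(y)>r-\varepsilon$ --- but the intended argument is clear; your proof sidesteps this entirely.)
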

\begin{proof}
By contradiction, suppose there is a positive contraction $g\in C_h(X)$ and $A\subseteq Y$ such that $r:=\norm{\pi_Y(\chi_{A})\Phi(\pi_X(g))}\neq \norm{\pi_X(\chi_{\varphi[A]}g)}=:s$.
\begin{enumerate}
\item If $r<s$, a simple functional calculus gives a positive contraction $h$ such that
\[
\norm{\pi_X(\chi_{\varphi[A]}h)}=1 \text{ and } \norm{\pi_Y(\chi_{A})\Phi(\pi_X(h))}=0,
\]
leading to a contradiction. 
\item If $r>s$, let $\varepsilon=(s-r)/2$. As $r>0$, then $A$ is infinite, and by properness of $\varphi$ so is $\varphi[A]$. 
Let $h\in C_h(Y)$ be a positive contraction such that $\pi_Y(h)=\Phi(\pi_X(g))$, so that $\limsup_{y\in A}h(y)>r+\varepsilon$. Pick an infinite $B\subseteq A$ such that $h(y)>r+\varepsilon$ for all $y\in B$. Fix $g'=1-g$, so that $\pi_Y(1-h)=\Phi(\pi_Y(g'))$. Since $\varphi[B]$ is infinite, then $\norm{\pi_X(\chi_{\varphi[B]}g')}\geq 1-\norm{\pi_X(\chi_{\varphi[B]}g)}\geq 1-s$ (the last inequality is given by that $B\subseteq A$). On the other hand, for every $y\in B$ we have that $(1-h)(y)\leq 1-r-\varepsilon$, and so $\norm{\pi_Y(\chi_B)\Phi(\pi_X(1-h))}\leq 1-r-\varepsilon<1-s$. We have thus found a contraction such that the first case applies, a contradiction. \qedhere
\end{enumerate}
\end{proof}

We now study geometric properties of the map $\varphi$ obtained from our notion of triviality.
\begin{prop}\label{prop:trivialhoms}
Let $(X,d_X)$ and $(Y,d_Y)$ be u.l.f.\ metric spaces, and suppose that $\Phi\colon C_\nu(X)\to C_\nu(Y)$ is a trivial $^*$-homomorphism as induced by $\varphi\colon Y\to X$. Then
\begin{enumerate}
\item\label{trivialc1} $\varphi$ is coarse and proper;
\item\label{trivialc2} if $\Phi$ is surjective then $\varphi$ is expanding;
\item\label{trivialc3} if $\Phi$ is injective then $\varphi$ has cobounded range.
\end{enumerate}
\end{prop}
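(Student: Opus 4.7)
The plan is to prove all three items by contradiction, with each argument reducing to the construction of a slowly oscillating ``tent function'' in a u.l.f.\ metric space. Specifically, given points $z_n$ and radii $r_n \to \infty$ in a u.l.f.\ space $Z$ with the balls $B_Z(z_n, 3r_n)$ pairwise disjoint, the function
\[
g(z) := \sum_n \max\bigl(0, 1 - d_Z(z, z_n)/r_n\bigr)
\]
is a positive contraction in $C_h(Z)$ with $g(z_n) = 1$ and $g$ vanishing outside $\bigcup_n B_Z(z_n, r_n)$: slow oscillation follows from each summand being $(1/r_n)$-Lipschitz and the supports being well-separated with $r_n \to \infty$. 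In each of the three arguments, the key step will be to extract a subsequence, via uniform local finiteness of the ambient space, for which such a tent function exists with prescribed values on finitely many auxiliary points.

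For~(\ref{trivialc1}), properness is immediate: taking $g = 1$ in Definition~\ref{def:trivial} gives $\|\pi_Y(\chi_A)\| = \|\pi_X(\chi_{\varphi[A]})\|$ for every $A \subseteq Y$, so $A$ is infinite iff $\varphi[A]$ is, whence $\varphi^{-1}[F]$ is finite for every finite $F \subseteq X$. For coarseness, I will assume there exist $R > 0$ and sequences $y_n, y_n' \in Y$ with $d_Y(y_n, y_n') \leq R$ and $d_X(\varphi(y_n), \varphi(y_n')) \to \infty$. Thinning via u.l.f.\ of $Y$ and then of $X$, I can arrange that $y_n, y_n'$ are all distinct and go to infinity, and (by properness) the same for $\varphi(y_n), \varphi(y_n')$, with all pairwise distances in $\{\varphi(y_n), \varphi(y_n')\}_n$ tending to infinity. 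The tent construction centred at each $\varphi(y_n)$ with radius $r_n := d_X(\varphi(y_n), \varphi(y_n'))/3$ then yields $g \in C_h(X)$ with $g(\varphi(y_n)) = 1$ and $g(\varphi(y_n')) = 0$. Setting $A = \{y_n\}$ and $A' = \{y_n'\}$ in the triviality identity, any positive contraction lift $h \in C_h(Y)$ of $\Phi(\pi_X(g))$ must have $\limsup_{y \in A} h(y) = 1$ and $\limsup_{y \in A'} h(y) = 0$, so I can extract $n_k$ with $h(y_{n_k}) - h(y_{n_k}') \to 1$ while $d_Y(y_{n_k}, y_{n_k}') \leq R$, contradicting slow oscillation of $h$.

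Items~(\ref{trivialc2}) and~(\ref{trivialc3}) are handled by dual arguments. For~(\ref{trivialc2}), assuming $\Phi$ surjective but $\varphi$ not expanding, I get $S_0 > 0$ and $y_n, y_n' \in Y$ with $d_Y(y_n, y_n') \to \infty$ but $d_X(\varphi(y_n), \varphi(y_n')) \leq S_0$; properness forces all four sequences to infinity. A tent function on $Y$ produces a positive contraction $h \in C_h(Y)$ with $h(y_n) = 1$, $h(y_n') = 0$, which by surjectivity of $\Phi$ lifts to a positive contraction $g \in C_h(X)$ with $\Phi(\pi_X(g)) = \pi_Y(h)$. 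The triviality identity then forces $g$ to oscillate by essentially $1$ on the pairs $(\varphi(y_n), \varphi(y_n'))$ at bounded distance $S_0$, contradicting $g \in C_h(X)$. For~(\ref{trivialc3}), if $\Phi$ is injective but $\varphi[Y]$ is not a net, pick $x_n \in X$ with $d_X(x_n, \varphi[Y]) \to \infty$ and run the tent construction at the $x_n$ with radii $r_n \sim d_X(x_n, \varphi[Y])/2$, obtaining $g \in C_h(X)$ with $\pi_X(g) \neq 0$ but $g$ vanishing on $\varphi[Y]$. Applying the triviality identity with $A = Y$ then gives $\|\Phi(\pi_X(g))\| = \|\pi_X(\chi_{\varphi[Y]} g)\| = 0$, contradicting injectivity.

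The main technical hurdle is the subsequence extraction underpinning the tent construction: I need the radii $r_n$ to tend to infinity (to guarantee slow oscillation of the sum) while keeping the supports pairwise well-separated (so that $g$ takes the prescribed values and is genuinely well-defined), and this balance is exactly what uniform local finiteness of the ambient space allows in each of the three settings.
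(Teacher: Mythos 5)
Your proposal is correct and follows essentially the same route as the paper: properness via $g=1$, and coarseness/expandingness/coboundedness each by building a slowly oscillating ``separating'' function (your tent functions are exactly the functions the paper invokes implicitly with ``Consider now a function $f\in C_h(X)$ such that $f(x_n)=1$ and $f(x'_n)=0$'') after thinning to well-separated subsequences using properness, then deriving a contradiction from the triviality identity. The only cosmetic differences are that you contradict slow oscillation of the lift directly where the paper phrases the same point as the norm equality $\norm{\pi_Y(\chi_A g)}=\norm{\pi_Y(\chi_B g)}$, and you spell out the tent construction explicitly.
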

\begin{proof}
Let us prove \eqref{trivialc1}. We reason by contradiction. First assume $\varphi$ is not proper, so that there is an infinite $A\subseteq Y$ such that $\varphi[A]$ is finite. Since $\Phi$ is unital, then $\Phi(1)=1$. Since $\Phi$ is induced by $\varphi$, then
\[
1=\norm{\pi_Y(\chi_A)}=\norm{\pi_Y(\chi_{A})\Phi(\pi_X(1))}=\norm{\pi_X(\chi_{\varphi[A]}1)}=0,
\]
a contradiction.

Assume now $\varphi$ is not coarse, and let $(y_n)$ and $(y'_n)$ be sequences of distinct points such that $\sup_nd_Y(y_n,y'_n)<\infty$ yet $d_X(x_n,x_n')\geq n$, where $x_n=\varphi(y_n)$ and $x'_n=\varphi(y'_n)$. Since $\varphi$ is proper, by passing to a subsequence, we can assume that for all $n\neq m$ we have that
\[
\max\{d_X(x_n,x_m),d_X(x_n,x_m'),d_X(x_n',x_m')\}\geq m+n.
\]
Let $A=\{y_n\}$ and $B=\{y'_n\}$. Note that for every $g\in C_h(Y)$ we have that $\norm{\pi_Y(\chi_Ag)}=\norm{\pi_Y(\chi_Bg)}$. Consider now a function $f\in C_h(X)$ such that $f(x_n)=1$ and $f(x'_n)=0$ for all $n\in\mathbb N$. Since $\Phi(\pi_X(f))\in C_\nu(Y)$, the contradiction comes from that
\[
1=\norm{\pi_X(\chi_{\varphi[A]}f)}=\norm{\pi_Y(\chi_A)\Phi(\pi_X(f))}=\norm{\pi_Y(\chi_B)\Phi(\pi_X(f))}=\norm{\pi_X(\chi_{\varphi[B]}f)}=0.\]

The proof of \eqref{trivialc2} is almost the same. Suppose now that $\Phi$ is surjective, and assume that $\varphi$ is not expansive, so that there are two sequences of distinct points $(y_n)$ and $(y'_n)$ such that $d_Y(y_n,y_n')\geq n$ yet $\sup_nd_X(x_n,x_n')<\infty$ where $x_n=\varphi(y_n)$ and $x'_n=\varphi(y'_n)$. Again by going to a subsequence, using properness of $\varphi$, we can assume that for all $n\neq m$
\[
\max\{d_X(x_n,x_m),d_Y(y_n,y_m),d_Y(y_n,y_m'),d_Y(y'_n,y'_m)\}\geq m+n.
\]
Let $A=\{y'_n\}$ and $B=\{y'_n\}$, so that for all $f\in C_h(X)$ we have that $\norm{\pi_X(\chi_{\varphi[A]}f)}=\norm{\pi_X(\chi_{\varphi[B]}f)}$. Let $g\in C_h(Y)$ be a such that $g(y_n)=1$ and $g(y'_n)=0$ for all $n$, and pick $f\in C_h(X)$ such that $\Phi(\pi_X(f))=\pi_Y(g)$. Then
\[
1=\norm{\pi_Y(\chi_{A}g)}=\norm{\pi_X(\chi_{\varphi[A]}f)}=\norm{\pi_X(\chi_{\varphi[B]f)}}=\norm{\pi_Y(\chi_Bg)}=0.
\]
Let us now prove \eqref{trivialc3}. Assume this is not the case, and suppose that there is a sequence $(x_n)$ such that $d_X(x_n,\varphi[Y])\geq n$. Let $f\in C_h(X)$ be a contraction such that $f\restriction \varphi[Y]=0$ yet $f(x_n)=1$ for all $n$. Putting all of this together we get that
\[
0=\norm{\pi_X(\chi_{\varphi[Y]}f)}=\norm{\pi_Y(\chi_Y)\Phi(\pi_X(f))}=\norm{\Phi(\pi_X(f))}=\norm{\pi_X(f)}=1,
\] where the second to last equality follows from injectivity of $\Phi$. This is a contradiction.
\end{proof}
\begin{corollary}
Let $X$ and $Y$ be u.l.f.\ metric spaces. Let $\Phi\colon C_\nu(X)\to C_\nu(Y)$ be a trivial unital $^*$-homomorphism as induced by $\varphi\colon Y\to X$. Let $\tilde\varphi\colon \nu Y\to \nu X$ be the dual of $\Phi$. Then $\nu\varphi=\tilde\varphi$. If $\Phi$ is surjective, then $\varphi$ is a coarse embedding and if $\Phi$ is an isomorphism, $\varphi$ is a coarse equivalence.
\end{corollary}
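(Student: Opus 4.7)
The second and third assertions are immediate from Proposition~\ref{prop:trivialhoms} together with Definition~\ref{def:maps}: if $\Phi$ is surjective, then $\varphi$ is coarse, proper, and expanding, and thus a coarse embedding; if $\Phi$ is additionally injective (hence an isomorphism), $\varphi$ also has cobounded range and is therefore a coarse equivalence.

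The plan for $\nu\varphi = \tilde\varphi$ is to show that $\Phi$ coincides with the canonical $^*$-homomorphism $\Phi_\varphi\colon C_\nu(X)\to C_\nu(Y)$ dual to $\nu\varphi$, namely the one with $\Phi_\varphi(\pi_X(g)) = \pi_Y(g\circ\varphi)$. This $\Phi_\varphi$ is well-defined because $\varphi$ is coarse and proper, so $g\circ\varphi\in C_h(Y)$ whenever $g\in C_h(X)$ (this is essentially the content of Proposition~\ref{prop:roetrivial}). Once $\Phi = \Phi_\varphi$, Gelfand duality gives $\tilde\varphi = \nu\varphi$. Since both $\Phi$ and $\Phi_\varphi$ are unital $^*$-homomorphisms between commutative $C^*$-algebras, it suffices to check agreement on positive contractions, so I fix a positive contraction $g\in C_h(X)$, choose a positive contraction lift $h\in C_h(Y)$ of $\Phi(\pi_X(g))$, and set $h':= g\circ\varphi$; the goal reduces to proving $h - h'\in c_0(Y)$.

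The triviality hypothesis gives $\norm{\pi_Y(\chi_A h)} = \norm{\pi_X(\chi_{\varphi[A]}g)}$ for every $A\subseteq Y$, while properness of $\varphi$ (each $x\in X$ has finite preimage) yields $\norm{\pi_Y(\chi_A h')} = \limsup_{y\in A} g(\varphi(y)) = \limsup_{x\in\varphi[A]} g(x) = \norm{\pi_X(\chi_{\varphi[A]}g)}$. Suppose, toward a contradiction, that $h - h'\notin c_0(Y)$. Then there exist $\varepsilon > 0$ and an infinite $A\subseteq Y$ with $|h(y) - h'(y)|\geq \varepsilon$ on $A$; splitting $A$ by sign and relabelling, I may assume $h(y)\geq h'(y) + \varepsilon$ on $A$. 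But then $\sup_{B} h \geq \sup_B h' + \varepsilon$ for every $B\subseteq A$, whence $\norm{\pi_Y(\chi_A h)}\geq \norm{\pi_Y(\chi_A h')} + \varepsilon$, contradicting the equality above.

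The main technical point is precisely this final contradiction: translating the definition of triviality, which is phrased as a condition on norms of products against characteristic functions, into genuine pointwise (modulo $c_0$) information identifying $\Phi$ with $\Phi_\varphi$. Everything else is formal.
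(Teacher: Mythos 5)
Your proof is correct. The paper states this corollary without proof, treating it as an immediate consequence of Proposition~\ref{prop:trivialhoms}, and your argument fills in the details in the natural way: the last two claims follow directly from Proposition~\ref{prop:trivialhoms} as you say, and for $\nu\varphi=\tilde\varphi$ the essential point you isolate correctly is that the norm condition in Definition~\ref{def:trivial} actually pins $\Phi$ down uniquely once $\varphi$ is fixed. Two small remarks, neither of which affects correctness: the identity $\limsup_{y\in A} g(\varphi(y)) = \limsup_{x\in\varphi[A]} g(x)$ deserves the one-line justification that properness makes $\varphi^{-1}(F')$ finite for finite $F'\subseteq X$ (for one direction) and that $\varphi[F]$ is finite for finite $F\subseteq Y$ (for the other); and the inequality $\sup_B h \ge \sup_B h' + \varepsilon$ should be applied over all cofinite $B\subseteq A$ and then passed to the infimum to get the required $\limsup$ comparison, which is implicitly what you are doing when you say ``for every $B\subseteq A$''.
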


We now give some examples of nontrivial $^*$-homomorphisms.

\begin{example}\label{example:nontrivial}
\begin{itemize}
\item Fix u.l.f.\ metric spaces $X$ and $Y$, and consider a maximal ideal $I\subseteq C_\nu(X)$. The quotient map $\pi_I\colon C_\nu(X)\to C_\nu(X)/I\cong\mathbb C$ composed with the inclusion $\mathbb C\cdot 1\subseteq C_\nu(Y)$, $\pi_I$ gives a nontrivial $^*$-homomorphism $C_\nu(X)\to C_\nu(Y)$.
\item The $^*$-homomorphism above has indeed very small image (just the canonical copy of $\mathbb C$), but we can do better. Consider $X=\{n^2\mid n\in\mathbb N\}$ with the metric induced from $\mathbb R$. In this case, $C_h(X)=\ell_\infty(X)$ and $C_\nu(X)=\ell_\infty(X)/c_0(X)\cong C(\beta\mathbb N\setminus\mathbb N)$, $\beta\mathbb N$ being the \v{C}ech--Stone compactification of $\mathbb N$. Using the nontrivial copy of $\beta\bbN\setminus \bbN$ inside $\beta\bbN\setminus \bbN$ constructed by Dow (\cite{dow2014non}), we get a nontrivial unital $^*$-homomorphism $C_\nu(X)\to C_\nu(X)$. Since Dow's construction, performed in $\ZFC$, gives a copy of $\beta\mathbb N\setminus\mathbb N$ inside itself which is nowhere dense, the kernel of its dual $^*$-homomorphism is an essential ideal; on the other hand, the (everywhere) nontrivial $^*$-homomorphism is surjective.
\item The following was originally remarked by Protasov in \cite{Protasov.Corona}. Suppose that $X$ is a u.l.f.\ metric space of asymptotic dimension zero. This is equivalent to that the metric on $X$ is coarsely equivalent to an ultrametric, or to that $\nu X$ is a zero dimensional topological space). It is a fairly instructive exercise to show that the Boolean algebra of clopen sets of $\nu X$ is countably saturated (in the language of model theory, see for example \cite[\S6.1]{CoronaRigidity}) and is atomless. Therefore, if one assumes the Continuum Hypothesis $\CH$, $C_\nu(X)$ is isomorphic to $\ell_\infty/c_0$, even though there $2^{\aleph_0}$ mutually non-coarsely equivalent spaces of asymptotic dimension zero. This gives rise to an abundance of nontrivial $^*$-isomorphisms. Using a similar model theoretic argument, recently Brian and Farah (\cite{BrianFarah}) showed the existence of $2^{\aleph_0}$ mutually non-coarsely equivalent spaces each of which has asymptotic dimension one and whose Higson coronas are all isomorphic, again if one assumes $\CH$.
\end{itemize}
\end{example}

As we have just seen, in $\ZFC$ it is possible to construct nontrivial surjective $^*$-homomorphisms between algebras of functions over Higson coronas, but all examples (unless one is willing to assume additional axioms as $\CH$) have a large kernel (an essential ideal). (For a unital $^*$-homomorphism between abelian \cstar-algebras, the kernel being an essential ideal corresponds precisely to the image of the dual map being nowhere dense.) We code this behaviour in the following definition.

\begin{definition}\label{def:cwep}
Let $X$ and $Y$ be u.l.f.\ metric spaces, and let $\Phi\colon C_\nu(X)\to C_\nu (Y)$ be a unital $^*$-homomorphism with dual $\varphi\colon\nu Y\to \nu X$. We say that the coarse weak Extension Principle holds for $\Phi$, and write $\cWEP(\Phi)$, if there exists $\tilde U\subseteq Y$ such that the corresponding $U\subseteq\nu Y$ is clopen, $\tilde\varphi[\nu Y\setminus U]$ is nowhere dense, and
\[
\Phi^U=\chi_{U}\Phi\chi_{U}\colon C_\nu(X)\to C_\nu(\tilde U)
\]
is trivial.

We say that the coarse weak Extension Principle holds, and write $\cWEP$, if $\cWEP(\Phi)$ holds for all unital $^*$-homomorphisms between algebras of functions over Higson coronas, and we say the the surjective coarse weak Extension Principle holds, and write $\cWEP(s)$, if $\cWEP(\Phi)$ holds for all unital surjective $^*$-homomorphisms between algebras of functions over Higson corona
\end{definition}

The coarse weak Extension Principle $\cWEP$ takes its name from the weak Extension Principle $\WEP$ as introduced by Farah in \cite{FarahBook2000} to study maps between \v{C}ech--Stone remainders of locally compact zero dimensional spaces, and later extended (and proved to follow from $\OCA$ and $\MA$) to all remainders of locally compact metrizable spaces in \cite{VY:wep}. The necessity of the set $U$ such that the image of $\nu Y\setminus U$ is nowhere dense is precisely to take in account Dow's $\ZFC$ example of a nontrivial copy of $\beta\mathbb N\setminus\mathbb N$ inside itself (Example~\ref{example:nontrivial}), which precisely shows that the strong version of the Extension Principle (see \cite[\S4.11]{FarahBook2000}) fails in $\ZFC$. Clearly, under the Continuum Hypothesis $\CH$ the $\cWEP$ fails, but, as we will show in Theorem~\ref{thm:OCAwep}, it is consistent that the the $\cWEP$ holds.

\section{Local triviality}\label{S.localtoglobal}
In this section, we introduce `locally trivial' $^*$-homomorphisms and show that local triviality implies triviality, at least when surjectivity is assumed. We start with a key definition.
\begin{definition}
Let $(X,d)$ be a u.l.f.\ metric space. A sequence $\bar X=(X_n)$ of pairwise disjoint finite subsets of $X$ is said to be sparse\footnote{This is not the usual definition of sparse sequence, which requires that $\lim_{m+n\to\infty}d(X_n,X_m)=\infty$. If $\bar X=(X_n)$ is a sequence of finite subsets of $X$ which is sparse according to this (weaker) definition, one can take finite unions of the $X_n$ and obtain a sequence which is sparse according to our definition. We do not require the sets $X_n$ to be necessarily nonempty.} if $d(X_n,X_m)>2\max\{n,m\}$ for all $n,m\in\mathbb N$.
\end{definition}

We record an easy lemma on functions supported on sparse sequences. This will be used in \S\ref{S.locallifts}. 

Recall that a linear function between \cstar-algebras is positive if it maps positive elements to positive elements, and order zero if it preserves orthogonality (see for example \cite[\S3.2.1]{Fa:STCstar}). 

\begin{lemma}\label{lem:orderzero}
Let $(X,d)$ be a u.l.f.\ metric space, and let $\bar X=(X_n)$ be a sequence of finite disjoint subsets of $X$. Let $g\in \ell_\infty(X)$ be supported on $\bigcup X_n$, and write $g=\sum g_n$ where $\supp(g_n)\subseteq X_n$ for all $n$. If $S\subseteq\bbN$, let 
\[
g_S=\sum_{n\in S} g_n.
\]
Then:
\begin{enumerate}
\item\label{o0c1} $g\in C_h(X)$ if and only if for every infinite $T\subseteq \bbN$ there is an infinite $S\subseteq T$ such that $g_S\in C_h(X)$, and 
\item\label{o0c2} if each $g_n$ is positive and contractive, the map $S\mapsto g_S$ induces an order zero positive contractive map $\ell_\infty\to \ell_\infty(X)$ which induces an order zero positive contractive map $\ell_\infty/c_0\to \ell_\infty(X)/C_0(X)$.
\end{enumerate}
\end{lemma}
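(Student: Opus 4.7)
The plan is to exploit the sparseness of $\bar X$—which I read as the intended hypothesis, given the preceding discussion and the fact that the forward direction of~\eqref{o0c1} fails for merely disjoint sequences (take $X_n=\{n\}\subseteq\mathbb N$ and a slowly oscillating $g$ bounded away from zero)—to reduce slow oscillation at scale $R$ to behavior within individual blocks $X_n$.

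For~\eqref{o0c1}, the forward direction will actually be stronger than stated: $g_T\in C_h(X)$ for \emph{every} infinite $T$, not just some infinite $S\subseteq T$. Given $\varepsilon,R>0$, I pick $N$ with $2N>R$; sparseness gives $d(X_n,X_m)>R$ whenever $n\neq m$ and $\max\{n,m\}\geq N$. Off the finite set $F_0:=\bigcup_{n<N}X_n$, any $R$-close pair $(x,x')$ either lies in a common block $X_n$ with $n\geq N$ or has at least one point outside $\bigcup_k X_k$, and in either case $|g_T(x)-g_T(x')|\leq|g(x)-g(x')|$, which is controlled by $g\in C_h(X)$.

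For the reverse direction, which I expect to be the main technical step, I argue by contrapositive. Assuming $g\notin C_h(X)$, fix $\varepsilon,R>0$ and a sequence of pairs $(x_k,x'_k)$ with $x_k\to\infty$, $d(x_k,x'_k)\leq R$, and $|g(x_k)-g(x'_k)|\geq\varepsilon$. Since $g$ vanishes off $\bigcup X_n$, after passing to a subsequence each $x_k$ lies in some $X_{n_k}$ with the $n_k$ distinct and $n_k\to\infty$. Sparseness then pins $x'_k$ either to the same block $X_{n_k}$ or to $X\setminus\bigcup X_m$, so $g_S(x'_k)=g(x'_k)$ whenever $n_k\in S$. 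Setting $T=\{n_k\}$, any infinite $S\subseteq T$ still contains infinitely many of the original witnessing pairs, contradicting $g_S\in C_h(X)$. The delicate point is arranging, via successive thinning, that $x'_k$ is indeed pinned to the same block or off-support; this is precisely where sparseness (rather than mere disjointness) is used.

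For~\eqref{o0c2}, I will define $\Phi\colon\ell_\infty\to\ell_\infty(X)$ by $\Phi(\xi)=\sum_n\xi(n)g_n$. The disjoint supports together with $\|g_n\|\leq 1$ give $\|\Phi(\xi)\|\leq\|\xi\|$, and positivity is immediate. The order zero property reduces to the identity $\Phi(\xi)\Phi(\eta)=\sum_n\xi(n)\eta(n)g_n^2$ coming from $g_ng_m=0$ for $n\neq m$, which vanishes whenever $\xi\eta=0$. Finiteness of each $X_n$ yields $\Phi(c_0)\subseteq C_0(X)$: if $|\xi(n)|<\varepsilon$ for $n\geq N$, then $|\Phi(\xi)|<\varepsilon$ outside the finite set $\bigcup_{n<N}X_n$. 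Hence $\Phi$ descends to the required positive contractive order-zero map $\ell_\infty/c_0\to\ell_\infty(X)/C_0(X)$, the three properties passing automatically to the quotient.
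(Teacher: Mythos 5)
Your reading of the hypothesis is correct: $\bar X$ must be \emph{sparse}, not merely a sequence of disjoint finite sets. This is consistent with the surrounding discussion and with the paper's own proof; as you note, with $X=\mathbb N$, $X_n=\{n\}$, and $g\equiv 1$, taking $T$ the even integers gives that $g_S=\chi_S$ is never slowly oscillating for any infinite $S\subseteq T$, so both the strengthened claim (every $g_S$ is slowly oscillating) and the biconditional of \eqref{o0c1} fail for merely disjoint sequences. The paper uses sparseness exactly where it asserts that multiplying a slowly oscillating $g$ by $\chi_{\bigcup_{n\in S}X_n}$ yields a slowly oscillating function, without spelling out why; your case analysis is what justifies that step.

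Beyond the hypothesis fix, your proof is correct and follows essentially the same path as the paper's. For the forward direction of \eqref{o0c1} you both prove the strengthened statement (every $g_T$ is slowly oscillating) by the same block-decomposition argument. For the reverse direction the paper first records that $\mathcal I_g=\{S: g_S\in C_h(X)\}$ is a (dense) ideal and then passes to a suitable $T\in\mathcal I_g$ containing both index sequences $\{k_n\}$ and $\{j_n\}$; you instead argue directly by contrapositive, exhibiting an infinite $T=\{n_k\}$ such that \emph{every} infinite $S\subseteq T$ witnesses non-slow-oscillation, with sparseness alone pinning $x'_k$ to the same block or off the support. This is a modest but genuine streamlining, since it avoids needing to establish first that $\mathcal I_g$ is an ideal (which itself requires the forward direction). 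The step you flag as ``delicate'' is in fact immediate from sparseness once $n_k\to\infty$, so no further thinning beyond making the $n_k$ distinct is required. For \eqref{o0c2} both arguments produce the same map; your explicit formula $\Phi(\xi)=\sum_n\xi(n)g_n$ with direct verification of positivity, contractivity, order zero, and $\Phi(c_0)\subseteq C_0(X)$ is more concrete than the paper's appeal to $\ell_\infty$ being generated by its projections, and is the cleaner route.
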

\begin{proof}
We prove \eqref{o0c1}. Suppose that $g$ is slowly oscillating. Since $g_S=g\chi_{\bigcup_{n\in S}X_n}$, each $g_S$ is slowly oscillating. This also shows that if $S\subseteq T$ and $g_T$ is slowly oscillating so is $T$. This, together with the fact that for disjoint $S$ and $S'$ we have that $g_{S\cup S'}=g_S+g_{S'}$ shows that $\mathcal I_g=\{S\subseteq \bbN\mid g_S\text{ is slowly oscillating}\}$ is an ideal on $\bbN$.

Vice versa, suppose that $g$ is not slowly oscillating, and fix $\varepsilon>0$ and two infinite sequences $(x_n),(x'_n)\subseteq X$ such that $d(x_n,x_n')\leq R$ but $|g(x_n)-g(x_n')|\geq\varepsilon$. Since $g=\sum g_n$ where $\supp(g_n)\subseteq X_n$ for every $n$, and the supports of the functions $g_n$ are pairwise disjoint, we can assume there are two sequences $(k_n)$ and $(j_n)$ such that $g(x_n)=g_{k_n}(x_n)$ and $g(x_n')=g_{j_n}(x_n')$. By going (twice if needed) to a subsequence we can assume that $T=\{k_n\mid n\in\mathbb N\}\cup\{j_n\mid n\in\mathbb N\}$ belongs to the dense ideal $\mathcal I_g$. Since $g_T$ is slowly oscillating, then 
\[
\lim_n|g_T(x_n)-g_T(x'_n)|=0.
\]
As $g_T(x_n)=g_{k_n}(x_n)=g(x_n)$ and $g_{T}(x_n')=g_{j_n}(x_n')=g(x_n')$, we have a contradiction.

We now show \eqref{o0c2}. Consider the map sending $\chi_S\mapsto g_S$, $\chi_S$ being the characteristic function on $S$. This map preserves orthogonality, since the sets $X_n$ are pairwise disjoint, and it maps every projection in $\ell_\infty$ to a positive contraction in $\ell_\infty(X)$. Since $\ell_\infty$ is generated (as a \cstar-algebra) by its projections (\cite[3.1.11]{Fa:STCstar}), the association $\chi_S\mapsto g_S$ extends to an order zero positive contractive map $\ell_\infty\to\ell_\infty(X)$. Furthermore, if $S$ is finite, then $g_S\in C_0(X)$, therefore the image of $c_0$ is contained in $C_0(X)$. This concludes the proof. 
\end{proof}

$\mathbb D$ denotes the closed unit disk. If $A$ is a set, $\mathbb D^A$ corresponds to contractions in $\ell_\infty(A)$. If $A\subseteq B$, we see $\mathbb D^A$ as embedded in $\mathbb D^B$, extending $f\in \mathbb D^A$ to an element of $\mathbb D^B$ by setting $f\restriction B\setminus A=0$. If $A$ is empty, $\mathbb D^A$ is the singleton whose element is the empty function.

\begin{definition}\label{def:productformspseq}
Let $X$ and $Y$ be sets, and let $\bar X=(X_n)$ be a sequence of pairwise disjoint finite subsets of $X$.
 
 A function $\Lambda\colon \prod_n\mathbb D^{X_n}\to \mathbb D^Y$ is \emph{of product form} on $\bar X$ if there exist pairwise disjoint finite sets $Y_n\subseteq Y$ and functions $\alpha_n\colon\mathbb D^{X_n}\to\mathbb D^{Y_n}$ such that $\Lambda=\sum\alpha_n$, that is, 
\[
\Lambda((g_n))=\sum_n\alpha_n(g_n)\ \ \text{ for all }(g_n)\in\prod_n\mathbb D^{X_n}.
\]
If $X$ and $Y$ are u.l.f.\ metric spaces, a $^*$-homomorphism $\Phi\colon C_\nu(X)\to C_\nu(Y)$ is of product form on $\bar X$ if there is $\Lambda$, a map of product form on $\bar X$, such that for all $g\in C_h(X)$ with $\supp(g)\subseteq\bigcup X_n$ we have
\[
\pi_Y(\Lambda(g))=\Phi(\pi_X(g)).
\]
\end{definition}
$^*$-homomorphisms of product form on a sparse sequence $\bar X$ give forms of `local triviality'. The main focus of this section is to show that local triviality suffices for our purposes, as proved in \S\ref{ss:localtoglobal} and specifically Theorem~\ref{thm:localtoglobal}. Before doing so, we give a few properties of $^*$-homomorphisms of product form. For this, we shall need the following filtration of $C_h(X)$

Let $(X,d)$ be a u.l.f.\ metric space. If $Z\subseteq X$ and $k\in\mathbb N$, we write $Z^{+k}$ for the ball of radius $k$ around $Z$, that is,
\[
Z^{+k}=\{x\in X\mid d(x,Z)\leq k\}.
\]
Let $\bar X=(X_n)$ be a sparse sequence in $X$, and define, for $n,m\in\bbN$,
\begin{align*}
M_{n,m,\bar X}&:=\{g\in\mathbb D^{(X_n)^{+n}}\mid \supp(g)\subseteq X_n\text{ and } \\
&\forall x,x'\in (X_n)^{+n} \,\,(d(x,x')\leq m\Rightarrow |g(x)-g(x')|\leq \frac{1}{m+1})\}.
\end{align*}
For $f\in \NN^\NN$, define
\[
M_{f,\bar X}:=\prod_n M_{n,f(n),\bar X}.
\]
When $\bar X$ is clear from the context, we simply write $M_{n,m}$ and $M_f$ for $M_{n,m,\bar X}$ and $M_{f,\bar X}$. $\N^{\N\uparrow}$ is the set of all functions $f\colon\N\to\N$ such that $f(n)\to\infty$ as $n\to\infty$.
\begin{prop}\label{prop:Mf}
Let $(X,d)$ be a u.l.f.\ metric space and let $\bar X=(X_n)$ be a sparse sequence in $X$. Then for all naturals $n$, $m$ and $m'$ and $f,f'\in\N^\N$ we have that:
\begin{enumerate}
\item\label{propMf0} If $m\leq m'$, then $M_{n,m'}\subseteq M_{n,m}$. If $f'\leq f$, then $M_{f}\subseteq M_{f'}$.
%\item\label{propMf0'} If $Z_n\setminus X_n\neq\emptyset$ then $\bigcap_m M_{n,m}=\{0\}$, else $\bigcap_m M_{n,m}=\{g\mid g\text{ is constant}\}$.
%\item\label{propMf1} If $\{x\mid d(x,X\setminus X_n)\geq (m+1)^2\}$ is nonempty, then there is $g\in M_{n,m}$ with $\norm{g}=1$.
\item\label{propMf3} For every contraction $g\in C_h(X)$ with $\supp(g)\subseteq\bigcup X_n$ there is $f\in\N^{\N\uparrow}$ such that $g\in M_f$.
\item\label{propMf4} If $f\in\N^{\N\uparrow}$ then $M_{f}\subseteq C_h(X)$.
\end{enumerate}
\end{prop}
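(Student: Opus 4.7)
The first claim is essentially definitional. When $m \le m'$, the constraint $d(x,x') \le m'$ is weaker than $d(x,x') \le m$ while the bound $1/(m'+1) \le 1/(m+1)$ is tighter, so every $g \in M_{n,m'}$ automatically satisfies the $M_{n,m}$ condition; the monotonicity $M_f \subseteq M_{f'}$ when $f' \le f$ then follows coordinatewise. The engine powering the remaining two parts is a single consequence of sparseness: the fattened sets $(X_n)^{+n}$ are pairwise disjoint, since $x \in (X_n)^{+n} \cap (X_k)^{+k}$ with $n \neq k$ would give $d(X_n, X_k) \le n + k \le 2\max\{n,k\}$, contradicting the sparseness inequality. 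In particular $(X_n)^{+n} \cap \bigcup_k X_k = X_n$, so a function on $X$ supported in $\bigcup_k X_k$, restricted to $(X_n)^{+n}$, is automatically supported in $X_n$.

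For (2), given a contraction $g \in C_h(X)$ with $\supp(g) \subseteq \bigcup_n X_n$, I view $g$ as the tuple $(g_n)_n$ with $g_n := g\rs(X_n)^{+n}$; by the observation above each $g_n$ lies in $\mathbb D^{(X_n)^{+n}}$ and is supported in $X_n$. It remains to produce $f \in \bbN^{\bbN\uparrow}$ with $g_n \in M_{n,f(n)}$ for every $n$. Slow oscillation of $g$ provides, for each $m$, a finite set $F_m \subseteq X$ such that $|g(x) - g(x')| \le 1/(m+1)$ whenever $x, x' \in X \setminus F_m$ and $d(x,x') \le m$. Because the $(X_n)^{+n}$ are pairwise disjoint finite sets and $F_m$ is finite, only finitely many $n$ satisfy $(X_n)^{+n} \cap F_m \neq \emptyset$; choose $N_m$, which I may arrange to be non-decreasing in $m$, so that this intersection is empty for all $n \ge N_m$, and set $f(n) := \max\{m : N_m \le n\}$. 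Then $f(n) \to \infty$, and for $x, x' \in (X_n)^{+n}$ with $d(x,x') \le f(n)$ both points lie outside $F_{f(n)}$, so $|g_n(x) - g_n(x')| = |g(x) - g(x')| \le 1/(f(n)+1)$, placing $g$ in $M_f$.

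For (3), given $(g_n) \in M_f$ with $f \in \bbN^{\bbN\uparrow}$, set $g := \sum_n g_n$, a well-defined contraction in $\ell_\infty(X)$ since the supports are pairwise disjoint. Fix $\varepsilon, R > 0$ and choose $N$ large enough that $f(n) \ge R$, $1/(f(n)+1) < \varepsilon$, and $n > R$ for all $n \ge N$; set $F := \bigcup_{n < N} X_n$, a finite set. For $x, x' \in X \setminus F$ with $d(x,x') \le R$ I split into cases. If $x \in X_n$ and $x' \in X_k$ with $n \neq k$, then $n, k \ge N$ and sparseness forces $d(x,x') > 2\max\{n,k\} > R$, a contradiction. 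If $x, x' \in X_n$, or $x \in X_n$ and $x' \notin \bigcup_k X_k$ (in which case $x' \in (X_n)^{+R} \subseteq (X_n)^{+n}$ since $n > R$), the oscillation bound for $g_n \in M_{n,f(n)}$ applies and yields $|g(x) - g(x')| = |g_n(x) - g_n(x')| \le 1/(f(n)+1) < \varepsilon$. Finally, if neither point lies in $\bigcup_k X_k$, then $g(x) = g(x') = 0$.

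None of the three statements is conceptually difficult; they are the technical scaffolding making the filtration $M_f$ usable. The most delicate step is (2), which requires converting the data from slow oscillation (one finite exceptional set per scale $m$) into a single function $f \to \infty$ that succeeds simultaneously at every coordinate. The u.l.f.\ hypothesis, combined with the specific constant $2\max\{n,m\}$ in the definition of sparseness — tuned precisely so that the fattenings $(X_n)^{+n}$ remain pairwise disjoint — is doing all the work here.
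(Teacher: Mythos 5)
Your proof is correct and follows essentially the same approach as the paper's: part (1) is definitional, part (2) converts slow oscillation into a single function $f$ by recording, at each scale $m$, a threshold past which the exceptional finite set is cleared, and part (3) uses sparseness to confine any $R$-close pair $x,x'$ (outside a finite exceptional set) to a single $(X_n)^{+n}$ where the $M_{n,f(n)}$ oscillation bound applies. One small slip in (2): after choosing $N_m$ "non-decreasing," the definition $f(n)=\max\{m: N_m\le n\}$ only makes sense if $N_m\to\infty$, which non-decreasing alone does not guarantee; simply insist, say, $N_m\ge m$ (always possible) so that $f$ is a well-defined element of $\N^{\N\uparrow}$. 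The explicit observation that the $(X_n)^{+n}$ are pairwise disjoint (so that $g\restriction (X_n)^{+n}$ is automatically supported in $X_n$) is a clean way to package what the paper uses implicitly.
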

\begin{proof}
\eqref{propMf0}: Let $g\in M_{n,m'}$, and pick $x,x'\in (X_n)^{+n}$ with $d(x,x')\leq m\leq m'$. Since $g\in M_{n,m'}$, then $|g(x)-g(x')|\leq\frac{1}{m'+1}\leq\frac{1}{m+1}$, hence $g\in M_{n,m}$. The second part of \eqref{propMf0} follows from the definition.

%For \eqref{propMf1}, take $x$ such that $d(x,X\setminus X_n)\geq (m+1)^2$. $g_{x,m}$ is as required by Lemma~\ref{lem:markers1}.

\eqref{propMf3}: Pick a contraction $g\in C_h(X)$ with $\supp(g)\subseteq\bigcup X_n$. We define a strictly increasing sequence $n_i$, for $i\in\N$. Let $n_0=0$. If $n_{i-1}$ has been defined, let $n_i>n_{i-1}$ be the minimum natural such that if $d(x,x')\leq i$, then $|g(x)-g(x')|\leq\frac{1}{i+1}$ for all $x,x'\in\bigcup_{j\geq n_i}(X_j)^{+j}$. This natural exists as $g$ is slowly oscillating and the sequence $\bar X$ is sparse. Let now, for $n\in\N$, $f(n)=i$ where $n_i\leq n<n_{i+1}$, so that $f\in\N^{\N\uparrow}$. We want to show that $g\in M_{f}$, so fix $n$ and $x,x'\in (X_n)^{+n}$ with $d(x,x')\leq f(n)$. Since $n\geq n_{f(n)}$, then by construction $|g(x)-g(x')|\leq\frac{1}{f(n)+1}$, hence $g\restriction X_n\in M_{n,f(n)}$.

\eqref{propMf4}: Fix $f\in\N^{\N\uparrow}$ and let $g\in M_f$. Fix $R,\varepsilon>0$. To prove that $g$ is slowly oscillating, we have to find a finite $Z\subseteq X$ such that if $x,x'\notin Z$ are such that $d(x,x')\leq R$ then $|g(x)-g(x')|\leq\varepsilon$. Let $m>R$ be large enough so that $f(n)\geq R$ and $\frac{1}{f(n)+1}<\varepsilon$ for every $n\geq m$, and let $Z=\bigcup_{i\leq m}X_i$. Since each $X_i$ is finite, so is $Z$. We claim $Z$ is as required: Fix then $x,x'\notin Z$ with $d(x,x')\leq R$. If $x,x'\notin\bigcup X_n$ then $g(x)=g(x')=0$, so there is nothing to prove. Otherwise, assume $x\in X_n$. Since $x\notin Z$ then $n>m$, so, as $d(x,x')<R<n$, we have that $x'\in (X_n)^{+n}$. Since $R\leq f(n)$, we have $d(x,x')\leq R\leq f(n)$, and therefore $|g(x)-g(x')|\leq \frac{1}{f(n)+1}\leq \varepsilon$. This concludes the proof.
\end{proof}

The following proposition shows that liftings of product form do not only lift slowly oscillating functions, but also behave approximately on $M_{f}$ for a large enough constant $f$. 

\begin{prop}\label{prop:them1}
Let $(X,d_X)$ and $(Y,d_Y)$ be u.l.f.\ metric spaces. Let $\Phi\colon C_\nu(X)\to C_\nu(Y)$ be a $^*$-homomorphism. Suppose that $\Phi$ is of product form on a sparse sequence $\bar X=(X_n)$, as witnessed by $\Lambda=\sum\alpha_n$ where $\alpha_n\colon \mathbb D^{X_n}\to\mathbb D^{Y_n}$. 
Then for all $\varepsilon>0$ there are naturals $m$ and $\bar n$ such that for all $n\geq \bar n$ and $g_n,g'_n\in M_{n,m}$ we have that
\begin{enumerate}
%\item $\limsup_n\norm{\alpha_n(g^*_n)-\alpha_n(g_n)^*}<\varepsilon$;
\item\label{clause3} $\norm{\alpha_n(g_n+g_n')-\alpha_n(g_n)-\alpha_n(g_n')}\leq\varepsilon$, 
\item\label{clause2} $\norm{\alpha_{n}(g_ng_n')-\alpha_n(g_n)\alpha_n(g_n')}\leq\varepsilon$,
\item\label{clause1} if $\Phi$ is injective, then $|\norm{\alpha_{n}(g_n)}-\norm{g_n}|\leq\varepsilon$.
%\item for every $\lambda\in\mathbb D$, $\limsup_n\norm{\lambda\alpha_n(g_n)-\alpha_n(\lambda g_n)}<\varepsilon$.
\end{enumerate}
In particular, if $g\in\prod_nM_{n,m}$ and $g'\in C_h(X)$ is such that $\supp(g')\subseteq\bigcup X_n$, then
\[
%\tag{$\ast$}
\norm{\pi_X(\Lambda(g+g')-\Lambda(g)-\Lambda(g'))}, \norm{\pi_X(\Lambda(gg')-\Lambda(g)\Lambda(g'))}\leq \varepsilon.
\]
\end{prop}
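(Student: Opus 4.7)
The proof is by contradiction, handling the three inequalities by the same scheme. Fix $\varepsilon>0$ and suppose one of (1), (2), (3) fails for every candidate $(m,\bar n)$. A standard diagonal extraction produces a strictly increasing sequence $(n_k)$ together with contractions $g_k,g'_k\in M_{n_k,k}$ (only $g_k$ in the case of (3)) witnessing the failure at level $n_k$ with slack $>\varepsilon$. The key observation is that the aggregate functions $g=\sum_k g_k$ and $g'=\sum_k g'_k$ lie in $C_h(X)$: sparseness of $\bar X$ makes the neighbourhoods $(X_{n_k})^{+n_k}$ pairwise disjoint, and the $\tfrac{1}{k+1}$--Lipschitz control forced by $M_{n_k,k}$ then ensures slow oscillation, by the same argument used in Proposition~\ref{prop:Mf}\eqref{propMf4}. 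By the product form of $\Lambda$, we have $\Lambda(g)=\sum_k\alpha_{n_k}(g_k)$ and analogously for $g'$, $g+g'$ and $gg'$ (taking care, where needed, to scale so that sums remain in $\prod_n\mathbb{D}^{X_n}$).

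Because $\Phi$ is a $^*$-homomorphism and $\pi_Y\circ\Lambda=\Phi\circ\pi_X$ on functions supported in $\bigcup_n X_n$, additivity and multiplicativity give
\[
\Lambda(g+g')-\Lambda(g)-\Lambda(g'),\ \ \Lambda(gg')-\Lambda(g)\Lambda(g')\in C_0(Y).
\]
On the other hand, each summand in the product-form expansion of either of these two functions is supported on the disjoint finite set $Y_{n_k}$ and, by construction, has sup-norm $>\varepsilon$ for every $k$. Picking a single point $y_k\in Y_{n_k}$ at which the modulus exceeds $\varepsilon$ produces an infinite subset of $Y$ on which the relevant difference has modulus $>\varepsilon$, contradicting its membership in $C_0(Y)$. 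This proves (1) and (2). For (3), note that injective $\Phi$ is isometric, and that disjointness of supports yields $\|\pi_X(g)\|=\limsup_k\|g_k\|$ and $\|\pi_Y(\Lambda(g))\|=\limsup_k\|\alpha_{n_k}(g_k)\|$; the uniform gap of size $>\varepsilon$ between the $k$-th terms survives under $\limsup$, contradicting $\|\Phi(\pi_X(g))\|=\|\pi_X(g)\|$.

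For the ``in particular'' clause, let $g\in\prod_n M_{n,m}$ and let $g'\in C_h(X)$ with support in $\bigcup_n X_n$; decompose $g'=\sum_n g'_n$ with $\supp(g'_n)\subseteq X_n$. By Proposition~\ref{prop:Mf}\eqref{propMf3} there is $f\in\N^{\N\uparrow}$ with $g'\in M_f$, so $g'_n\in M_{n,f(n)}$. For all $n$ with $f(n)\geq m$, which is all but finitely many $n$, we have $g'_n\in M_{n,m}$ by Proposition~\ref{prop:Mf}\eqref{propMf0}. Applying (1) or (2) componentwise for $n\geq\bar n$ large enough, and using that the $Y_n$ are pairwise disjoint so the finitely many exceptional indices disappear after projecting by $\pi_Y$, delivers the two stated inequalities.

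The technical core of the argument is the first paragraph: building the aggregate $g,g'$ that witness a global failure while remaining slowly oscillating. This crucially uses both sparseness of $\bar X$ (to place the bad behaviour on disjoint patches of $Y$, so that $C_0(Y)$ can be contradicted) and the strengthened $\tfrac{1}{k+1}$-Lipschitz constant in $M_{n_k,k}$ (to keep the aggregate in $C_h(X)$, so that $\Phi$ can be applied to it). One should also remember the minor bookkeeping of scaling down combinations such as $g+g'$ to land inside $\prod_n\mathbb{D}^{X_n}$, where $\Lambda$ is defined.
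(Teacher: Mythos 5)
Your proof is correct and follows essentially the same strategy as the paper's: negate a clause to extract a strictly increasing sequence $(n_k)$ and witnesses $g_k,g'_k$ with increasing Lipschitz control, aggregate them into $g=\sum g_k$, $g'=\sum g'_k\in C_h(X)$ via Proposition~\ref{prop:Mf}\eqref{propMf4}, and use that $\Lambda$ lifts the $^*$-homomorphism $\Phi$ on functions supported in $\bigcup X_n$ to force the offending differences into $C_0(Y)$, contradicting the uniform $\varepsilon$-gap on the disjoint supports $Y_{n_k}$; the ``in particular'' clause is also handled the same way via Proposition~\ref{prop:Mf}\eqref{propMf3} and \eqref{propMf0}. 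One small remark: your parenthetical about scaling $g+g'$ back into $\prod_n\mathbb D^{X_n}$ would not actually help, since the $\alpha_n$ are not homogeneous; the cleaner reading (consistent with how the paper uses the notion) is that $\Lambda$ and the $\alpha_n$ are tacitly extended to all bounded functions supported on $\bigcup X_n$.
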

\begin{proof}
We find naturals $m_i$ and $\bar n_i$ such that the pair $(m_i,\bar n_i)$ satisfies clause (i), for $i=1,2,3$, and then take their maxima. Since the proofs are precisely the same, we only show the details for \eqref{clause3}. 

Suppose \eqref{clause3} fails, meaning that for all $m$ and $n$ there is $n'\geq n$ and $g,g'\in M_{n',m}$ such that $\norm{\alpha_{n'}(gg')-\alpha_{n'}(g)\alpha_{n'}(g')}>\varepsilon$. We can thus construct two strictly increasing sequences $(m_k)$ and $(n_k)$ and contractions $g_k,g'_k\in M_{n_k,m_k}$ such that for all $k$
\[
\norm{\alpha_{n_k}(g_k+g'_k)-\alpha_{n_k}(g_k)-\alpha_{n_k}(g'_k)}>\varepsilon.
\]
Let $g=\sum g_k$ and $g'=\sum g'_k$, and note that $g,g'\in C_h(X)$, by \eqref{propMf4} of Proposition~\ref{prop:Mf}. Since $\Lambda$ is a lift of a linear map on slowly oscillating functions supported on $\bigcup X_n$, we have that
\[
\Lambda(g+g')-\Lambda(g)-\Lambda(g')=\sum\alpha_{n_k}(g_k+g'_k)-\sum\alpha_{n_k}(g_k)-\sum\alpha_{n_k}(g'_k)\in C_0(X).
\]
The contradiction now arises from that \begin{align*}
0&=\norm{\pi_Y(\Lambda(g+g')-\Lambda(g)-\Lambda(g'))}\\&=\limsup_k\norm{\alpha_{n_k}(g_k+g'_k)-\alpha_{n_k}(g_k)-\alpha_{n_k}(g'_k)}>\varepsilon.
\end{align*}
This ends the proof for clause \eqref{clause3}.

We now prove the last statement. Fix $g$ and $g'$ as in the hypotheses, and write $g=\sum g_n$ and $g'=g'_n$ where $\supp(g_n),\supp(g'_n)\subseteq X_n$ for all $n$. Since $g'\in C_h(X)$, by \eqref{propMf3} of Proposition~\ref{prop:Mf} for all sufficiently large $n$ we have that $g'_n\in M_{n,m}$. By how $m$ was chosen, after a given natural we have that $\norm{\alpha_{n}(g_n+g'_n)-\alpha_{n}(g_n)-\alpha_{n}(g'_n)}\leq\varepsilon$. Noticing that 
\[
\norm{\pi_Y(\Lambda(g+g')-\Lambda(g)-\Lambda(g'))}=\limsup_n\norm{\alpha_{n}(g_n+g'_n)-\alpha_{n}(g_n)-\alpha_{n}(g'_n)}
\] 
ends the proof. The proof for products is precisely the same.
\end{proof}
\begin{remark}\label{remark:subseq}
Note that since $M_{n,m}\supseteq M_{n,m'}$ whenever $m\leq m'$, if $m$ satisfies the thesis of proposition so does any $m'\geq m$. Further, the choice of $m$ remains the same if one takes a subsequence of $\bar X$ and applies the same map $\Lambda$ (only this time restricted to functions supported on the subsequence).
\end{remark}
We now show approximate coherence for liftings of product form.
\begin{prop}\label{prop:them2}
Let $(X,d_X)$ and $(Y,d_Y)$ be u.l.f.\ metric spaces. Let $\Phi\colon C_\nu(X)\to C_\nu(Y)$ be a $^*$-homomorphism. Suppose that $\Phi$ is of product form on the sparse sequences $\bar X=(X_n)$ and $\bar W=(W_n)$, as witnessed by $\Lambda_{\bar X}=\sum\alpha_n$ where $\alpha_n\colon \mathbb D^{X_n}\to\mathbb D^{Y_n}$ and $\Lambda_{\bar W}=\sum\beta_n$ where $\beta_n\colon \mathbb D^{W_n}\to\mathbb D^{Z_n}$. Then for every $\varepsilon>0$ there are $m$ and $\bar n$ such that if $g\in M_{n,m,\bar X}\cap M_{n,m,\bar W}$ for $n\geq \bar n$ then 
\[
\norm{\alpha_n(g)-\beta_n(g)}<\varepsilon.
\]
\end{prop}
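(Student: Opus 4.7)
\begin{proofsketch}
The plan is to argue by contradiction, in the same spirit as Proposition~\ref{prop:them1}: if the conclusion fails, assemble a ``diagonal'' function $g=\sum_k g_k\in C_h(X)$ whose images under $\Lambda_{\bar X}$ and $\Lambda_{\bar W}$ lift the same element of $C_\nu(Y)$, hence must agree modulo $C_0(Y)$, yet actually disagree by at least $\varepsilon$ on an infinite discrete set of points.

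\textbf{Step 1 (setting up the contradiction).} Suppose towards contradiction that there is $\varepsilon>0$ such that for every $m$ and $\bar n$ there exist $n\geq \bar n$ and $g\in M_{n,m,\bar X}\cap M_{n,m,\bar W}$ with $\|\alpha_n(g)-\beta_n(g)\|\geq \varepsilon$. By recursion, build strictly increasing sequences $(n_k)$ and $(m_k)$ together with contractions $g_k\in M_{n_k,m_k,\bar X}\cap M_{n_k,m_k,\bar W}$ witnessing this. Note that $\supp(g_k)\subseteq X_{n_k}\cap W_{n_k}$.

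\textbf{Step 2 (a disjointness refinement).} The sets $Y_{n_k}$ are pairwise disjoint and so are the $Z_{n_k}$, but $Y_{n_k}$ and $Z_{n_j}$ may overlap for $k\neq j$. To prevent unwanted cancellation, recursively thin $(n_k)$ to a subsequence $(n_k')$ so that for all $k\neq j$,
\[
Y_{n_k'}\cap Z_{n_j'}=\emptyset.
\]
This is possible because at stage $k$, the finite set $\bigcup_{j<k}Y_{n_j'}$ meets only finitely many of the pairwise disjoint $Z_{n_i}$, and the finite set $\bigcup_{j<k}Z_{n_j'}$ meets only finitely many $Y_{n_i}$, so for sufficiently large $i$ the choice $n_k'=n_i$ satisfies both disjointness requirements.

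\textbf{Step 3 (forming the diagonal function).} Set $g=\sum_k g_k$. Since $g_k\in M_{n_k',m_k,\bar X}$ with $m_k\to\infty$, Proposition~\ref{prop:Mf}\eqref{propMf3}--\eqref{propMf4} (applied via a dominating $f\in\N^{\N\uparrow}$) yields $g\in C_h(X)$. The support of $g$ is contained in both $\bigcup_n X_n$ and $\bigcup_n W_n$, so by the definition of being of product form applied to each of $\bar X$ and $\bar W$,
\[
\pi_Y(\Lambda_{\bar X}(g))=\Phi(\pi_X(g))=\pi_Y(\Lambda_{\bar W}(g)).
\]
Therefore $h:=\Lambda_{\bar X}(g)-\Lambda_{\bar W}(g)=\sum_k(\alpha_{n_k'}(g_k)-\beta_{n_k'}(g_k))\in C_0(Y)$.

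\textbf{Step 4 (deriving the contradiction).} For each $k$ pick $y_k\in Y_{n_k'}\cup Z_{n_k'}$ with $|\alpha_{n_k'}(g_k)(y_k)-\beta_{n_k'}(g_k)(y_k)|\geq\varepsilon$. The disjointness properties of Step~2, combined with the pairwise disjointness of the $Y_{n_k'}$ and of the $Z_{n_k'}$, ensure that for $j\neq k$ neither $\alpha_{n_j'}(g_j)$ nor $\beta_{n_j'}(g_j)$ is nonzero at $y_k$. Hence $h(y_k)=\alpha_{n_k'}(g_k)(y_k)-\beta_{n_k'}(g_k)(y_k)$ and $|h(y_k)|\geq\varepsilon$. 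Since the $y_k$ are distinct points in the u.l.f.\ space $Y$, this contradicts $h\in C_0(Y)$.

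The genuinely delicate step is Step~2: ensuring that when one combines two different ``localizations'' of $\Phi$, the images $Y_{n_k}$ and $Z_{n_k}$ do not mutually interfere and destroy the pointwise lower bound $\varepsilon$. Everything else is bookkeeping combined with the already-established properties of the filtration $M_{f}$.
\end{proofsketch}
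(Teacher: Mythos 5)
Your proof is correct and follows essentially the same diagonalization-by-contradiction route as the paper: build $g_k\in M_{n_k,m_k,\bar X}\cap M_{n_k,m_k,\bar W}$ witnessing failure, thin the sequence to make the output sets non-interfering, form $g=\sum g_k\in C_h(X)$ via Proposition~\ref{prop:Mf}, and observe that $\Lambda_{\bar X}(g)-\Lambda_{\bar W}(g)$ must lie in $C_0(Y)$ yet stays bounded away from zero on infinitely many points. Your Step~2 spells out the disjointness bookkeeping (only the condition $Y_{n_k'}\cap Z_{n_j'}=\emptyset$ is actually needed, as you correctly note; the paper also thins for $X_{n_i}\cap W_{n_j}=\emptyset$, which is harmless but superfluous since $\supp(g_k)\subseteq X_{n_k}\cap W_{n_k}$ automatically), but the argument is otherwise identical.
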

\begin{proof}
We argue by contradiction and suppose that for every $m$ there is arbitrarily large $n$ and $g\in M_{m,n,\bar X}\cap M_{m,n,\bar W}$ such that $\norm{\alpha_n(g)-\beta_n(g)}\geq\varepsilon$. We construct two increasing sequences of naturals $(m_i)$ and $(n_i)$, and functions $g_i\in M_{m_i,n_i,\bar X}\cap M_{m_i,n_i,\bar W}$ such that for each $i$ we have that
\[
\norm{\alpha_{n_i}(g_i)-\beta_{n_i}(g_i)}\geq\varepsilon.
\]
By passing to a subsequence, as all sets $X_n$, $W_n$, $Y_n$ and $Z_n$ are finite, we can assume that for all $i\neq j$ we have that $X_{n_i}\cap W_{n_j}=\emptyset=Y_{n_i}\cap Z_{n_j}$. Let $g=\sum g_i$. Since each $g_i$ is a contraction, $g\in\ell_\infty(X)$, and its support is contained in $(\bigcup X_n)\cap (\bigcup W_n)$. Since $Y_{n_i}\cap Z_{n_j}=\emptyset$ for all $i\neq j$, then
\[
\norm{\pi_Y(\Lambda_{\bar X}(g)-\Lambda_{\bar W}(g))}=\limsup_i\norm{\alpha_{n_i}(g_i)-\beta_{n_i}(g_i)}\geq\varepsilon.
\]
Since $m_i\to\infty$, clause \ref{propMf4} of Proposition~\ref{prop:Mf} gives that $g\in C_h(X)$. Since $\supp(g)\subseteq(\bigcup X_n)\cap (\bigcup W_n)$, $\Lambda_{\bar X}$ and $\Lambda_{\bar W}$ are lifting for $\Phi$ on $g$, the contradiction comes from that
\[
0=\norm{\pi_Y(\Lambda_{\bar X}(g)-\Lambda_{\bar W}(g))}\geq\varepsilon>0.\qedhere
\]
\end{proof}
\subsection{From local triviality to triviality}\label{ss:localtoglobal}
We can now state and prove the main result of this section, showing that locally trivial $^*$-homomorphisms are indeed trivial (Theorem~\ref{thm:localtoglobal}). First, a definition.

\begin{definition}\label{def:markers}
Let $(X,d)$ be a u.l.f.\ metric space and let $x\in X$. If $m\in\mathbb N$, the function $g_{x,m}\colon X\to [0,1]$ is defined as 
\[
g_{x,m}(x')=\max \{0,1-\frac{d(x',x)}{(m+1)^2}\}.
\]
\end{definition}
We call these `marker functions'. 
If $Z\subseteq X$ and $k\in\mathbb N$, we let
\[
\mathrm{int}(Z,k)=\{x\mid d(x,X\setminus Z)\geq k\}.
\]
The proof of the following is a simple verification.
\begin{lemma}\label{lem:markers1}
Let $(X,d)$ be a u.l.f.\ metric space and let $\bar X=(X_n)$ be a sequence of disjoint finite subsets of $X$. Let $m\in\mathbb N$. If $x\in\mathrm{int}(X_n,(m+1)^2)$, then $g_{x,m}\in M_{n,m}$.\qed
\end{lemma}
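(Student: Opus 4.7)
The plan is to verify directly the three conditions defining membership in $M_{n,m,\bar X}$ for the function $g_{x,m}$ (viewed as an element of $\mathbb D^{(X_n)^{+n}}$ by restriction).

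First, $g_{x,m}$ is a $[0,1]$-valued contraction, immediate from the formula $g_{x,m}(x')=\max\{0,1-d(x',x)/(m+1)^2\}$.

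Next, I would check $\supp(g_{x,m})\subseteq X_n$. The hypothesis $x\in\mathrm{int}(X_n,(m+1)^2)$ gives $d(x,X\setminus X_n)\geq (m+1)^2$, so for any $x'\notin X_n$ one has $d(x',x)\geq (m+1)^2$, hence $1-d(x',x)/(m+1)^2\leq 0$ and $g_{x,m}(x')=0$.

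Finally, for the oscillation bound: given $x',x''\in (X_n)^{+n}$ with $d(x',x'')\leq m$, the reverse triangle inequality yields $|d(x',x)-d(x'',x)|\leq d(x',x'')\leq m$. Since $t\mapsto\max\{0,1-t/(m+1)^2\}$ is $1/(m+1)^2$-Lipschitz,
\[
|g_{x,m}(x')-g_{x,m}(x'')|\leq \frac{|d(x',x)-d(x'',x)|}{(m+1)^2}\leq \frac{m}{(m+1)^2}\leq \frac{1}{m+1},
\]
which is exactly the condition required.

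There is no real obstacle; the statement is essentially a computation, and the only subtlety worth flagging is that the exponent in the radius $(m+1)^2$ appearing in the definition of $g_{x,m}$ is calibrated precisely so that a Lipschitz slope of $1/(m+1)^2$ against a spatial gap of at most $m$ returns the required bound $1/(m+1)$, while simultaneously being large enough (compared to the depth $(m+1)^2$ from $X\setminus X_n$ imposed by the hypothesis) to force the support into $X_n$.
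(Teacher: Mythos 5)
Your proof is correct and is precisely the "simple verification" the paper alludes to (the paper states the lemma with a \verb+\qed+ and no written argument). You correctly identify the three items to check (unit-ball membership, support in $X_n$, and the modulus-of-oscillation bound), and the Lipschitz estimate $\tfrac{m}{(m+1)^2}\leq\tfrac{1}{m+1}$ together with $d(x,X\setminus X_n)\geq(m+1)^2$ is exactly the calibration the definition of $g_{x,m}$ is designed to make work.
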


%\begin{proof}
%If $z\notin X_n$ then $d(x,z)>(m+1)^2$ and so $g_x(z)=0$; this shows that $\supp(g_x)\subseteq X_n$. Fix now $y,z\in Z_n$ such that $d(y,z)\leq m$. By the triangular inequality, $|d(y,x)-d(z,x)|\leq m$, and therefore $|g_x(y)-g_x(z)|\leq\frac{1}{m+1}$. This shows that $g_x\in M_{n,m}$.
%\end{proof}

\begin{definition}\label{def:sparsesequences}
Let $(X,d)$ be a u.l.f.\ metric space, and fix $x_0\in X$. Define
\[
X^{\mathrm{e}}_n=\{x\mid 2^{6n+1}\leq d(x,x_0)\leq 2^{6n+5}\}\text{ and }X_n^{\mathrm{o}}=\{x\mid 2^{6n+4}\leq d(x,x_0)\leq 2^{6n+8}\},
\] 
and let
\[
\bar X^{\mathrm{e}}=(X^{\mathrm{e}}_n) \text{ and }\bar X^{\mathrm{o}}=(X^{\mathrm{o}}_n).
\]
\end{definition}
Both $\bar X^{\mathrm{e}}$ and $\bar X^{\mathrm{o}}$ are sparse sequences. Note it is possible that some of the $X_n^i$s, for $i\in\{\mathrm e,\mathrm o\}$, are empty.
\begin{lemma}\label{lem:decomposition}
Let $(X,d)$ be a u.l.f.\ metric space, and let $x_0\in X$. Let $\bar X^\mathrm e$ and $\bar X^\mathrm o$ be defined as above. For any $g\in C_h(X)$ there are functions $g_{\mathrm e}$ and $g_{\mathrm o}$ in $C_h(X)$ such that $\supp(g_i)\subseteq \bigcup X^{i}_n$, for $i\in\{\mathrm e,\mathrm o\}$, and withe property that $g-g_{\mathrm e}-g_\mathrm o\in C_0(X)$. Moreover, if $g$ is a contraction, we can assume that if $x\in X_n^i$ is such that $|g_i(x)|\geq 1/2$ then $x\in \mathrm{int}(X_n^i,n)$.
\end{lemma}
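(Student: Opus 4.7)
The strategy is standard: build a partition of unity $\varphi_{\mathrm{e}},\varphi_{\mathrm{o}}\in C_h(X)$ with $\supp(\varphi_i)\subseteq\bigcup_n X_n^i$ and $\varphi_{\mathrm{e}}+\varphi_{\mathrm{o}}=1$ outside a finite set, then set $g_i:=g\varphi_i$. Since products of slowly oscillating contractions are slowly oscillating, the $g_i$ automatically lie in $C_h(X)$ with the desired supports, and $g-g_{\mathrm{e}}-g_{\mathrm{o}}$ is supported on a bounded set, hence (by u.l.f.) finite, hence in $C_0(X)$.

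\smallskip
\noindent\emph{Step 1: the overlap picture.} Write $r(x)=d(x,x_0)$. A direct computation using the definitions of $X_n^{\mathrm{e}}$ and $X_n^{\mathrm{o}}$ shows that for $r\geq 2$ each point lies in either one or two of the sets. Per period there are three overlap intervals -- $r\in[2^{6n+1},2^{6n+2}]$ ($X_n^{\mathrm{e}}\cap X_{n-1}^{\mathrm{o}}$), $r\in[2^{6n+4},2^{6n+5}]$ ($X_n^{\mathrm{e}}\cap X_n^{\mathrm{o}}$), $r\in[2^{6n+7},2^{6n+8}]$ ($X_n^{\mathrm{o}}\cap X_{n+1}^{\mathrm{e}}$) -- separated by a ``deep even'' interval $[2^{6n+2},2^{6n+4}]$ and a ``deep odd'' interval $[2^{6n+5},2^{6n+7}]$.

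\smallskip
\noindent\emph{Step 2: the partition of unity.} Define $\varphi_{\mathrm{e}}$ on $X$ by composing $r$ with the piecewise linear function on $[0,\infty)$ that equals $1$ on each deep even interval, equals $0$ on each deep odd interval, and interpolates linearly on each overlap interval; on $\{r<2\}$ set it to $0$. Set $\varphi_{\mathrm{o}}=1-\varphi_{\mathrm{e}}$ on $\{r\geq 2\}$ and $\varphi_{\mathrm{o}}=0$ on $\{r<2\}$. By construction $\supp(\varphi_{\mathrm{e}})\subseteq\bigcup X_n^{\mathrm{e}}$, $\supp(\varphi_{\mathrm{o}})\subseteq\bigcup X_n^{\mathrm{o}}$, and $\varphi_{\mathrm{e}}+\varphi_{\mathrm{o}}=1$ off the finite set $\{r<2\}$.

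\smallskip
\noindent\emph{Step 3: slow oscillation.} The transition intervals at level $n$ all have length at least $2^{6n+1}$, so the slope of $\varphi_{\mathrm{e}}$ (as a function of $r$) on them is at most $2^{-(6n+1)}$. Given $R,\varepsilon>0$, take $N$ with $R\cdot 2^{-(6N+1)}<\varepsilon$ and let $F=\{x:r(x)\leq 2^{6N+8}\}$, a bounded and hence (by u.l.f.) finite set. For $x,x'\notin F$ with $d_X(x,x')\leq R$, the $1$-Lipschitz property of $r$ gives $|r(x)-r(x')|\leq R$, and both values lie in the high-$n$ range where the slope is $\leq 2^{-(6N+1)}$, so $|\varphi_{\mathrm{e}}(x)-\varphi_{\mathrm{e}}(x')|<\varepsilon$. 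Similarly for $\varphi_{\mathrm{o}}$.

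\smallskip
\noindent\emph{Step 4: the interior condition.} Assume $g$ is a contraction. Since $|g|\leq 1$, $|g_i(x)|\geq 1/2$ forces $\varphi_i(x)\geq 1/2$. By the piecewise-linear description, the set $\{\varphi_i\geq 1/2\}\cap X_n^i$ lies strictly inside the two transition intervals bounding $X_n^i$, at distance at least (half of) the shortest transition length $2^{6n+1}/2=2^{6n}$ from the endpoints of $X_n^i$ (measured in $r$). Since $r$ is $1$-Lipschitz, this is also a lower bound in the metric $d_X$; and $2^{6n}\geq n$ for every $n\geq 0$, so $x\in\mathrm{int}(X_n^i,n)$.

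\smallskip
There is nothing hard here: the only bookkeeping point is getting the overlap arithmetic in Step~1 right, after which Steps~2--4 are routine. The exponent $6$ in the definition of $X_n^{\mathrm{e}},X_n^{\mathrm{o}}$ is comfortably large enough to give both slow oscillation and the interior buffer.
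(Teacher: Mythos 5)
Your proposal is correct and reaches the conclusion by a genuinely different construction from the paper's. The paper builds the partition of unity intrinsically in the metric: for each $n$ it sets $h_{\mathrm e,n}(x)=\max\{0,\,1-d(x,\mathrm{int}(X_n^{\mathrm e},2n))/2n\}$, takes $h_{\mathrm e}=\sum_n h_{\mathrm e,n}$ and $h_{\mathrm o}=1-h_{\mathrm e}$, and gets the interior property for free because $h_{\mathrm e,n}(x)\geq 1/2$ forces $d(x,\mathrm{int}(X_n^{\mathrm e},2n))\leq n$ and hence $x\in\mathrm{int}(X_n^{\mathrm e},n)$. You instead factor everything through the $1$-Lipschitz radial function $r(x)=d(x,x_0)$ and a one-variable piecewise-linear bump, which turns both slow oscillation and the interior buffer into transparent slope estimates. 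Both routes ultimately depend on the explicit exponential geometry in Definition~\ref{def:sparsesequences}: the paper's at the ``routine'' verification that $h_{\mathrm o}$ is supported on $\bigcup X_n^{\mathrm o}$, yours in the overlap and deep-interval bookkeeping, which you carried out correctly. One small gap to patch: the annulus $2\leq r<4$ is neither a deep interval nor an overlap (there is no $X_{-1}^{\mathrm o}$), so your interpolation rule leaves $\varphi_{\mathrm e}$ undefined there; declare $\varphi_{\mathrm e}=1$ on that stretch, which is forced anyway by the requirement $\supp(\varphi_{\mathrm o})\subseteq\bigcup X_n^{\mathrm o}$. With that, Steps 1--4 are sound, and your radial construction is arguably cleaner than the paper's since it sidesteps the $n=0$ degeneracy of the bound $2n$ in $h_{\mathrm e,n}$.
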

\begin{proof}
Fix $n$, and define 
\[
h_{\mathrm e,n}(x)=\max \{0,1-\frac{d(x,\mathrm{int}(X_n^\mathrm e,2n))}{2n}\}.
\]
$h_{\mathrm e,n}$ is $1$ on $\mathrm{int}(X_n^\mathrm e,2n)$, and $0$ outside of $X_n^\mathrm e$. 

Fix $i\in\{\mathrm e,\mathrm o\}$. If $n\neq m$ we have $d(X_n^i,X_m^i)>m+n$, the functions $h_{i,n}$, for $n\in\mathbb N$, are pairwise orthogonal. Moreover, these get more and more slowly oscillating as $n\to\infty$, and therefore $h_i=\sum h_{i,n}$ is slowly oscillating.

Note that if $h_{\mathrm e,n}(x)\geq 1/2$ then $x\in \mathrm{int}(X_n^\mathrm e,n)$. Let $h_\mathrm e=\sum_n h_{\mathrm e,n}$ and $h_\mathrm o=1-h_\mathrm e$. It is routine to show that $h_\mathrm o$ is supported on $\bigcup X_n^\mathrm o$ and that if $x\in X_n^\mathrm o$ is such that $h_\mathrm o(x)\geq 1/2$ then $x\in\mathrm{int}(X_n^\mathrm o,n)$. For every contraction $g\in C_h(X)$ the decomposition $g=h_\mathrm e g+h_\mathrm og$ is as required.
\end{proof}

\begin{theorem}\label{thm:localtoglobal}
Let $(X,d_X)$ and $(Y,d_Y)$ be u.l.f.\ metric spaces, and suppose that $\Phi\colon C_\nu(X)\to C_\nu(Y)$ is a surjective $^*$-homomorphism. If $\Phi$ is of product form on both $\bar X^{\mathrm{e}}$ and $\bar X^{\mathrm{o}}$, then $\Phi$ is trivial.
\end{theorem}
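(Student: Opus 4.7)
The plan is to use the product form structure on both $\bar X^{\mathrm e}$ and $\bar X^{\mathrm o}$ to locate, for each $y \in Y$, a canonical preimage $\varphi(y)\in X$ by inspecting where the marker functions of Definition~\ref{def:markers} get their mass under the liftings $\alpha_n^{\mathrm e}$ and $\alpha_n^{\mathrm o}$, and then to invoke Lemma~\ref{lem:trivialbetter} to verify that this $\varphi$ induces $\Phi$. First I would fix $\Lambda^{\mathrm e}=\sum_n\alpha_n^{\mathrm e}$ and $\Lambda^{\mathrm o}=\sum_n\alpha_n^{\mathrm o}$ witnessing the product form hypothesis, pick $\varepsilon>0$ small (much smaller than $1/K$, where $K$ is the u.l.f.\ bound on the size of a ball of radius $(m+1)^2$), and choose $m,\bar n$ so that Propositions~\ref{prop:them1} and~\ref{prop:them2} apply with error $\varepsilon$ for both sequences; I would also use the immediate generalisation of Proposition~\ref{prop:them2} to pairs of indices $(n,n')$ with $X_n^{\mathrm e}\cap X_{n'}^{\mathrm o}\neq\emptyset$, needed to handle the overlap bands between the two sparse sequences.

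The definition of $\varphi$ comes next. For $y\in Y$, if $y$ lies in the target set $Y_n^{\mathrm e}$ of $\alpha_n^{\mathrm e}$ for some $n\ge\bar n$ with $\alpha_n^{\mathrm e}(h_{\mathrm e,n})(y)\ge 1/2$ (here $h_{\mathrm e,n}$ is as in the proof of Lemma~\ref{lem:decomposition}), I would set $\varphi(y)$ to be a point $x\in\mathrm{int}(X_n^{\mathrm e},(m+1)^2)$ maximising $\alpha_n^{\mathrm e}(g_{x,m})(y)$; otherwise I would try the analogous $\mathrm{o}$-procedure; and if both fail, I would assign $\varphi(y)$ injectively to a point of $X$. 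Since $\Phi(1)=1$ forces $\Lambda^{\mathrm e}(1_{\mathrm e})+\Lambda^{\mathrm o}(1_{\mathrm o})\equiv 1\pmod{C_0(Y)}$, the ``both fail'' set is finite, so properness of $\varphi$ is automatic from the bounded support of each $g_{x,m}$ (hitting only finitely many layers $X_n^i$), the finiteness of each $Y_n^i$, and the injectivity on the fallback.

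The verification via Lemma~\ref{lem:trivialbetter} then proceeds as follows. Given $A\subseteq Y$ and a positive contraction $g\in C_h(X)$ with $\|\pi_X(\chi_{\varphi[A]}g)\|=1$, pick $y_k\in A$ with $x_k=\varphi(y_k)\to\infty$ and $g(x_k)\to 1$; decompose $g=g_{\mathrm e}+g_{\mathrm o}+g_0$ via Lemma~\ref{lem:decomposition} and pass to a subsequence with $g_{\mathrm e}(x_k)\ge 1/2$, which by the moreover clause of that lemma places $x_k\in\mathrm{int}(X_{n_k}^{\mathrm e},n_k)$ for some $n_k\to\infty$. Using approximate multiplicativity to argue that the functional $f\mapsto\alpha_{n_k}^{\mathrm e}(f)(y_k)$ on $M_{n_k,m,\bar X^{\mathrm e}}$ behaves as an approximate character of total mass $\alpha_{n_k}^{\mathrm e}(h_{\mathrm e,n_k})(y_k)$, concentrating on a ball of radius $(m+1)^2$ and hence on at most $K$ points, I would deduce $\alpha_{n_k}^{\mathrm e}(g_{x_k,m})(y_k)\ge c_0$ for an absolute constant $c_0=c_0(K)>0$. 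Then approximate multiplicativity of $\alpha_{n_k}^{\mathrm e}$ applied to $g_{\mathrm e}\chi_{X_{n_k}^{\mathrm e}}$ and $g_{x_k,m}$, combined with the slow oscillation of $g_{\mathrm e}$ yielding $g_{\mathrm e}g_{x_k,m}\approx g_{\mathrm e}(x_k)g_{x_k,m}$ pointwise (and scalar multiplication handled by iterating Proposition~\ref{prop:them1}\eqref{clause3} on rational scalars), gives
\[
\bigl|\Lambda^{\mathrm e}(g_{\mathrm e})(y_k)-g_{\mathrm e}(x_k)\bigr|\cdot\alpha_{n_k}^{\mathrm e}(g_{x_k,m})(y_k)=O(\varepsilon).
\]
Dividing by $c_0$ forces $\Lambda^{\mathrm e}(g_{\mathrm e})(y_k)\ge 1/2-O(\varepsilon/c_0)$, whence $\|\pi_Y(\chi_A)\Phi(\pi_X(g))\|>0$.

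The hard part will be the case where $\varphi(y_k)=x_k$ was selected by the $\mathrm{o}$-procedure even though $g_{\mathrm e}(x_k)\ge 1/2$: here $x_k$ sits in the overlap $X_{n_k}^{\mathrm e}\cap X_{n'_k}^{\mathrm o}$, and one must use the generalised Proposition~\ref{prop:them2} to transfer the positivity of $\alpha_{n'_k}^{\mathrm o}(g_{x_k,m})(y_k)$ to $\alpha_{n_k}^{\mathrm e}(g_{x_k,m})(y_k)$; this also forces $y_k\in Y_{n_k}^{\mathrm e}$ via the support constraint on the $\alpha$'s, permitting the $\mathrm{e}$-side computation above. This two-sequence coherence, together with the approximate scalar-multiplication step inside each $\alpha_n^i$, is what I expect to be the technically most delicate part of the argument.
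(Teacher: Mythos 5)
Your overall strategy is the right one — use the marker functions $g_{x,m}$ together with the product-form liftings $\alpha^{\mathrm e}_n$ and $\alpha^{\mathrm o}_n$ to define $\varphi$, then verify triviality via Lemma~\ref{lem:trivialbetter} — and several pieces (the decomposition $g=g_{\mathrm e}+g_{\mathrm o}$, the choice of $m$ via Propositions~\ref{prop:them1} and~\ref{prop:them2}, the need to glue $\varphi^{\mathrm e}$ and $\varphi^{\mathrm o}$ coherently on overlap bands) match the paper's proof. But the central step of your argument contains a real gap.

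You define $\varphi(y)$ by taking the argmax of $x\mapsto\alpha_n^{\mathrm e}(g_{x,m})(y)$ when $\alpha_n^{\mathrm e}(h_{\mathrm e,n})(y)\ge 1/2$, and you then assert, via an ``approximate character'' heuristic, that this maximum is bounded below by a constant $c_0(K)$ depending only on the u.l.f.\ bound $K$ on balls of radius $(m+1)^2$. This is not correct. Approximate additivity and multiplicativity of $\alpha_n^{\mathrm e}$ on $M_{n,m}$ do not force the ``mass'' $\alpha_n^{\mathrm e}(h_{\mathrm e,n})(y)$ to concentrate on a single ball of radius $(m+1)^2$. Concretely: tile the support of $h_{\mathrm e,n}$ (which grows with $n$) by $N$ pieces $p_1,\dots,p_N$ each of size $\sim K$, and set $a_j=\alpha_n^{\mathrm e}(p_j)(y)$. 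Iterating clause~\eqref{clause3} of Proposition~\ref{prop:them1} gives $\sum_j a_j\ge 1/2 - N\varepsilon$, while clause~\eqref{clause2} gives $|a_ja_{j'}|\le\varepsilon$ for $j\neq j'$. These constraints are compatible with every $a_j$ being of order $\sqrt{\varepsilon}$ once $N\sim 1/\sqrt{\varepsilon}$; since $N$ is on the order of $|X_n^{\mathrm e}|/K$, which is unbounded as $n\to\infty$, no constant lower bound $c_0(K)$ on $\max_j a_j$ follows. In particular your ``concentrating on at most $K$ points'' step has no justification, and the verification via Lemma~\ref{lem:trivialbetter} collapses because you cannot assert $\alpha_{n_k}^{\mathrm e}(g_{x_k,m})(y_k)\ge c_0$.

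Closely related is the fact that you never use the hypothesis that $\Phi$ is \emph{surjective}, which the paper uses in an essential way. The paper avoids the concentration problem by defining $\varphi(y)$ only on the set $Y'=\bigcup_{n,i}W_n^i$ where $W_n^i=\bigcup\{Y_{\alpha_n^i,g_{x,m},1/4}\mid x\in\mathrm{int}(X_n^i,n)\}$; on $Y'$ the lower bound $\alpha_n^i(g_{\varphi(y),m})(y)>1/4$ holds by fiat, so no concentration lemma is needed. The price is that one must show $Y'$ is cobounded in $Y$ (Claim~\ref{claim:cob}), and this is exactly where surjectivity enters: one pulls back a bump far from $Y'$ through $\Phi$, decomposes the preimage, and uses approximate multiplicativity to manufacture a point of $Y'$ near the bump, giving a contradiction. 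Your substitute claim that the ``both fail'' set is finite (via $\Lambda^{\mathrm e}(h_{\mathrm e})+\Lambda^{\mathrm o}(h_{\mathrm o})\equiv 1$) is fine as far as it goes, but the set where $\alpha_n^i(h_{i,n})(y)\ge 1/2$ is strictly larger than $Y'$, and on the excess part you have no control over $\alpha_n^i(g_{x,m})(y)$. To repair the argument you would need to either establish a genuine concentration estimate (which I don't believe holds) or abandon the argmax definition in favour of the paper's ``pick any $x$ with a guaranteed $1/4$ mass on $y$'' and then prove coboundedness using surjectivity.
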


If $W\subseteq X$, $\alpha\colon \mathbb D^W\to\mathbb D^Y$ is a map, $\varepsilon>0$ and $g\in \mathbb D^W$, we let 
\[
Y_{\alpha,g,\varepsilon}=\{y\in Y\mid |\alpha(g)(y)|>\varepsilon\}.
\]
The set $Y_{\alpha,g,\varepsilon}$ measures in some way the essential support of $\alpha(g)$. These are going to be used to `correct' the sets $Y_n$ arising from `local triviality'.
\begin{proof}[Proof of Theorem~\ref{thm:localtoglobal}]
For $i\in\{\mathrm e,\mathrm o\}$, we let $(Y^{i}_n)$ be a sequence of disjoint finite subsets of $Y$ and let $\Lambda_i=\sum\alpha^i_n$, where $\alpha^i_n\colon \mathbb D^{X_n^i}\to \mathbb D^{Y^i_n}$, be witnessing that $\Phi$ is of product form on $\bar X^i$. In particular, $\Lambda_i$ lifts $\Phi$ on slowly oscillating functions supported on $\bigcup X^i_n$.

The main idea of the proof is to use the marker functions $g_{x,m}$ for a suitable natural $m$, and to send $y\in Y$ to some (any) $x$ such that $\max\{\alpha^\mathrm e_n(g_{x,m}),\alpha^\mathrm o_n(g_{x,m})\}$ is large on $y$. We thus find a natural $m$ which is large enough for our purposes, prove a few properties of the sets $Y_{\alpha^i_n,g_{x,m},\delta}$, and then show that it is indeed possible to define a suitable $\varphi\colon Y\to X$ which induces $\Phi$ as in Definition~\ref{def:trivial}.
 
Let $\varepsilon=1/24$, and fix naturals $m$ and $\bar n$ such that the conclusion of Proposition~\ref{prop:them1} is satisfied for $\Phi$, $\varepsilon$, and both pairs $(\bar X^{\mathrm{e}},\Lambda_{\mathrm{e}})$ and $(\bar X^\mathrm{o},\Lambda_{\mathrm{o}})$, and the conclusion of Proposition~\ref{prop:them2} is satisfied for $\Phi$, $\varepsilon$, and the pair $\{(\bar X^\mathrm{e},\Lambda_{\mathrm{e}}),(\bar X^\mathrm{o},\Lambda_{\mathrm{o}})\}$. In particular for all $n\geq \bar n$ and $i\in\{\mathrm e,\mathrm o\}$, $\alpha^i_n$ is approximately linear and approximately multiplicative on $M_{n,m,\bar X^i}$, and $\alpha^\mathrm e_n$ and $\alpha^\mathrm o_n$ approximately agree on $M_{n,m,\bar X^\mathrm e}\cap M_{n,m,\bar X^\mathrm o}$. By Remark~\ref{remark:subseq}, we can assume that $\bar n\geq (m+1)^2$ and that $1/\varepsilon^2<m$.

%The following two claims show the sets $Y_{\alpha^i_n,g_{x,m},\delta}$ have some desirable properties.
%
%\begin{claim}
%Let $n\geq \bar n^2$, and $i\in\{\mathrm{e},\mathrm{o}\}$. If $x\in \mathrm{int}(X_n^{i},n)$, then 
%\[
%Y_{\alpha^i_n,g_{x,m},1/4}\subseteq Y_{\alpha^i_n,g_{x,m^2},1/2}.
%\]
%\end{claim}
%\begin{proof}
%Since $n\geq \bar n^2$, then $\supp(g_{x,m^2})\subseteq X_n^i$, and therefore $g_{x,m^2}\in M_{n,m,\bar X^i}$. Since $\norm{g_{x,m^2}g_{x,m}-g_{x,m}}<1/m<\varepsilon$ and our choice of $m$ and $\varepsilon$, we have that 
%\[
%\norm{\alpha_n^i(g_{x,m^2})\alpha_n^i(g_{x,m})-\alpha_n^i(g_{x,m})}\leq\norm{\alpha_n^i(g_{x,m^2}g_{x,m}-g_{x,m})}+2\varepsilon\leq 3\varepsilon.
%\]
%Now, if $y\in Y_{\alpha^i_n,g_{x,m},1/4}$ then $|\alpha_n^i(g_{x,m})(y)|\geq 1/4$, but if $y\notin Y_{\alpha^i_n,g_{x,m^2},1/2}$, then 
%\[
%|\alpha_n^i(g_{x,m^2})\alpha_n^i(g_{x,m})(y)|<|\alpha_n^i(g_{x,m})(y)|/2.
%\]
%Putting all of this together gives that $|\alpha_n^i(g_{x,m^2})\alpha_n^i(g_{x,m})(y)-\alpha_n^i(g_{x,m})(y)|> 1/8\geq 3\varepsilon$. This is a contradiction.
%\end{proof}

\begin{claim}\label{claim:close}
There is $R>0$ such that for all $i,j\in\{\mathrm e,\mathrm o\}$ and large enough naturals $k$ and $k'$, if $y\in Y_{\alpha^i_k,g_{x,m},1/4}\cap Y_{\alpha^j_{k'},g_{x',m},1/4}$ for $x\in \mathrm{int}(X_k^i,k)$ and $x'\in \mathrm{int}(X_{k'}^j,k')$, then $d_X(x,x')\leq R$.
\end{claim}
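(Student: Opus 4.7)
I propose to take $R := 2(m+1)^2$ (twice the diameter of a marker's support) and argue by contradiction via a single infinite-sum construction that handles all four pairs $(i,j) \in \{\mathrm{e}, \mathrm{o}\}^2$ uniformly. Assume the claim fails: then for every $N \geq \bar n$ there is a bad configuration $k, k' \geq N$, $i, j, x, x', y$ with $d_X(x,x') > 2(m+1)^2$. Enumerate these as $(k_n, k_n', i_n, j_n, x_n, x_n', y_n)$ with $k_n, k_n' \geq n$. Pass to a subsequence so that $(i_n,j_n) = (i,j)$ is fixed. Each $y_n$ lies in the finite set $Y_{k_n}^i$, and the sets $Y_k^i$ are pairwise disjoint for distinct $k$; having infinitely many distinct $y_n$'s (available since $k_n \to \infty$) then lets us pass to a further subsequence making $k_n$ strictly increasing, and symmetrically for $k_n'$.

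The key technical step is to extract yet a further subsequence along which the entire family of marker supports $\{\mathrm{supp}(g_{x_n, m})\}_n \cup \{\mathrm{supp}(g_{x_n', m})\}_n$ is pairwise disjoint. Each marker has support a ball of radius $(m+1)^2$, sitting inside $X_{k_n}^i$ or $X_{k_n'}^j$ (using $k_n, k_n' \geq \bar n \geq (m+1)^2$ and the interior hypothesis). Disjointness within each of the two subfamilies comes from sparseness of $\bar X^i$ resp.\ $\bar X^j$; the diagonal cross-pairs $(\mathrm{supp}(g_{x_n,m}), \mathrm{supp}(g_{x_n',m}))$ are disjoint by $d_X(x_n,x_n') > 2(m+1)^2$; the off-diagonal cross-pairs separate because the annuli of Definition~\ref{def:sparsesequences} are exponentially spaced in $d(\cdot, x_0)$, so one can arrange $k_n, k_n'$ to grow fast enough that distinct annuli, even across the two families, are mutually far apart.

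Set $g := \sum_n g_{x_n, m}$ and $g' := \sum_n g_{x_n', m}$. Each summand is $1/(m+1)^2$-Lipschitz and the supports form sparse families, so $g, g' \in C_h(X)$ (either directly, or invoking Lemma~\ref{lem:orderzero}(1)). The disjointness just arranged gives $g \cdot g' = 0$, so $\Phi(\pi_X(g)) \Phi(\pi_X(g')) = \Phi(\pi_X(g g')) = 0$. Since $g$ is supported on $\bigcup X_n^i$ and $g'$ on $\bigcup X_n^j$, Definition~\ref{def:productformspseq} yields $\pi_Y(\Lambda^i(g)) = \Phi(\pi_X(g))$ and $\pi_Y(\Lambda^j(g')) = \Phi(\pi_X(g'))$, whence $\Lambda^i(g) \cdot \Lambda^j(g') \in C_0(Y)$. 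At each $y_n$, disjointness of the families $\{Y_k^i\}_k$ and $\{Y_k^j\}_k$ collapses both infinite sums to their $n$-th terms, so $\Lambda^i(g)(y_n) = \alpha_{k_n}^i(g_{x_n,m})(y_n)$ and $\Lambda^j(g')(y_n) = \alpha_{k_n'}^j(g_{x_n',m})(y_n)$, each of modulus exceeding $1/4$. Hence $|\Lambda^i(g) \Lambda^j(g')(y_n)| > 1/16$ at infinitely many distinct $y_n$, contradicting membership in $C_0(Y)$.

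The main obstacle is the subsequence extraction enforcing pairwise disjointness of all marker supports. This is routine when $i = j$, thanks to the sparseness of a single sequence $\bar X^i$; but in the mixed case $i \neq j$ the annuli $X_k^{\mathrm e}$ and $X_\ell^{\mathrm o}$ genuinely overlap when $|k-\ell| \leq 1$, so cross-family separation is not automatic from sparseness alone. One must actively control the growth of $\{k_n\}$ and $\{k_n'\}$, for instance by interleaving them with large gaps, to exploit the exponential spacing of the shells around $x_0$. Getting this argument written cleanly is the most delicate component.
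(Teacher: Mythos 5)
There is a genuine gap: the functions $g := \sum_n g_{x_n, m}$ and $g' := \sum_n g_{x_n', m}$ you build are in general \emph{not} slowly oscillating. Each marker $g_{x_n, m}$ is $\tfrac{1}{(m+1)^2}$-Lipschitz with $m$ \emph{fixed}, so the steepness of the bumps does not decay as $n\to\infty$. Concretely, take $R=(m+1)^2$ and $\varepsilon=1/2$: for every $n$ there are points $x,x'$ near $x_n$ with $d(x,x')\leq R$ yet $|g(x)-g(x')|$ close to $1$, and these configurations escape every finite set. Disjointness of supports ensures the sum is well defined and orthogonal, but it does nothing to make the variation over $R$-balls tend to zero; nor does Lemma~\ref{lem:orderzero}(1) help, since every $g_S$ with $S$ infinite has the same defect. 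Consequently you cannot assert $\pi_Y(\Lambda^i(g))=\Phi(\pi_X(g))$, and the contradiction collapses.

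The paper avoids this by taking $g,g'$ to be \emph{genuinely} slowly oscillating orthogonal positive contractions with $g(x_n)=1=g'(x_n')$ (possible because $d(x_n, X\setminus X_{k_n}^i)\to\infty$, so one can use bumps of growing width), and then relating $g$ to the markers rather than equating them. Writing $g=\sum g_k$ with $\supp(g_k)\subseteq X_k^i$, one has $\norm{g_{k_n}\,g_{x_n,m}-g_{x_n,m}}\to 0$ because $g$ is slowly oscillating and $g(x_n)=1$. Then Proposition~\ref{prop:them1} (approximate multiplicativity of $\alpha^i_{k_n}$ on $M_{k_n,m}$) gives $\alpha^i_{k_n}(g_{k_n})\,\alpha^i_{k_n}(g_{x_n,m})\approx \alpha^i_{k_n}(g_{x_n,m})$; evaluating at $y_n$, where $|\alpha^i_{k_n}(g_{x_n,m})(y_n)|>1/4$, forces $|\alpha^i_{k_n}(g_{k_n})(y_n)|$ to stay bounded away from $0$. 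The same for $g'$, and then orthogonality of $g$ and $g'$ yields the contradiction. So the missing ideas in your proposal are exactly these two interlocking steps: replacing the sum of markers by a legitimately slowly oscillating $g$, and using approximate multiplicativity to transfer the $>1/4$ lower bound from the marker to $g_{k_n}$. Without them the argument does not go through.
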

\begin{proof}
Suppose that this is not the case. The negation of the thesis gives $i,j\in\{\mathrm e,\mathrm o\}$ and, for every $n$, two naturals $k_n$ and $k'_n$ and points $x_n,x'_n\in X$ with $x_n\in \mathrm{int}(X_{k_n}^i,k_n)$ and $x'_n\in \mathrm{int}(X_{k'_n}^j,k'_n)$ and $y_n\in Y$ such that 
\[
y_n\in Y_{\alpha^i_{k_n},g_{x_n,m},1/4}\cap Y_{\alpha^j_{k_n'},g_{x'_n,m},1/4}
\]
where $d_X(x_n,x_n')\geq n$. We can assume that $(k_n)$ and $(k'_n)$ are strictly increasing. Let $g$ and $g'$ be slowly oscillating orthogonal positive contractions such that $g(x_n)=1=g'(x_n')$ for all $n$. Since $d_X(x_n,X\setminus X_{k_n}^i)\to\infty$ as $n\to\infty$, we can assume that $\supp(g)\subseteq\bigcup X_n^i$ and write $g=\sum g_n$ where $\supp(g_n)\subseteq X_n^i$ for all $n$. Likewise, we write $g'=\sum g'_n$ where $\supp(g'_n)\subseteq X_n^j$ for all $n$. Since $g$ is slowly oscillating and $g(x_n)=1$, we have that $\limsup\norm{g_{k_n}g_{x_n,m}-g_{x_n,m}}=0$. Likewise $\limsup\norm{g'_{k'_n}g_{x'_n,m}-g_{x'_n,m}}=0$. By our choice of $m$ and $\varepsilon$, we have that 
\[
\limsup\norm{\alpha_n^i(g_{k_n})\alpha_n^i(g_{x_n,m})-\alpha_n^i(g_{x_n,m})}\leq 3\varepsilon
\] 
and 
\[ 
\limsup\norm{\alpha_n^j(g'_{k'_n})\alpha_n^j(g_{x'_n,m})-\alpha_n^j(g_{x'_n,m})}\leq 3\varepsilon.
\]
Since for every $n$ we have that $|\alpha_{k_n}^i(g_{x_n,m})(y)|, |\alpha_{k_n}^j(g_{x_n,m})(y)|>1/4$, we have that
\[
\min \{\limsup|\alpha_{k_n}^i(g_{k_n})(y)|, \limsup|\alpha_{k'_n}^j(g'_{k'_n})(y)|\} >0,
\]
 which implies that $\Lambda_i(g)$ and $\Lambda_j(g)$ do not have orthogonal images in $\ell_\infty(Y)/c_0(Y)$. As these lift $\Phi(\pi_X(g))$ and $\Phi(\pi_X(g'))$ respectively, and $gg'=0$, we have a contradiction.
\end{proof}

 %Notice that $X\setminus \bigcup_{n\geq \bar n}\mathrm{int}(X_n^\mathrm e,(m+1)^2) \cup \bigcup_{n\geq\bar n} \mathrm{int}(X_n^\mathrm o,(m+1)^2)\subseteq\bigcup_{n\leq\bar n}X_n^\mathrm{e}\cup X_n^\mathrm o$ is finite. 
 
 Define, for $n\geq\bar n$ and $i\in\{\mathrm e,\mathrm o\}$,
\[
W^i_n= \bigcup \{Y_{\alpha^i_n,g_{x,m},1/4}\mid x\in \mathrm{int}(X_n^i,n)\},
\]
and let
\[
Y'=\bigcup_{n\geq\bar n}W_n^\mathrm e\cup W_n^\mathrm o.
\]
\begin{claim}\label{claim:cob}
$Y'$ is cobounded in $Y$.
\end{claim}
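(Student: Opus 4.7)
The plan is to argue by contradiction: assume $Y\setminus Y'$ is infinite (equivalently unbounded in $Y$, since $Y$ is u.l.f.), and derive a contradiction using surjectivity of $\Phi$. First I would extract an infinite subset $A\subseteq Y\setminus Y'$ whose elements are mutually far apart in $Y$, so that $\chi_A\in C_h(Y)$. By thinning I may further assume every $y_k\in A$ lies in some shell $Y_{n_k}^{\mathrm e}$ with $(n_k)$ strictly increasing; the case of infinitely many $y_k$ in $Y_{n_k}^{\mathrm o}$ shells is handled symmetrically via the pair $(\bar X^{\mathrm o},\Lambda_{\mathrm o})$.

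By surjectivity of $\Phi$, there exists a positive contraction $f\in C_h(X)$ with $\Phi(\pi_X(f))=\pi_Y(\chi_A)$. Applying Lemma~\ref{lem:decomposition}, write $f\equiv f_{\mathrm e}+f_{\mathrm o}\pmod{C_0(X)}$ with $\supp(f_i)\subseteq\bigcup_n X_n^i$ and the crucial property that $\{x\in X_n^i : f_i(x)\ge 1/2\}\subseteq\mathrm{int}(X_n^i,n)$. Since $\Lambda_{\mathrm e}(f_{\mathrm e})+\Lambda_{\mathrm o}(f_{\mathrm o})$ lifts $\pi_Y(\chi_A)$, and the $Y_n^i$ are pairwise disjoint, evaluating at $y_k$ yields
\[
\alpha_{k^{\mathrm e}}^{\mathrm e}(f_{\mathrm e}|_{X_{k^{\mathrm e}}^{\mathrm e}})(y_k)+\alpha_{k^{\mathrm o}}^{\mathrm o}(f_{\mathrm o}|_{X_{k^{\mathrm o}}^{\mathrm o}})(y_k)\to 1,
\]
where $k^i$ is the unique index (if any) with $y_k\in Y_{k^i}^i$. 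Passing to a subsequence, I may assume $\alpha_{n_k}^{\mathrm e}(f_{\mathrm e}|_{X_{n_k}^{\mathrm e}})(y_k)\ge 1/2-\eta$ for a chosen small $\eta$.

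The core task is to transfer this large value from $\phi_k:=f_{\mathrm e}|_{X_{n_k}^{\mathrm e}}$ to a single marker $g_{x,m}$ with $x\in\mathrm{int}(X_{n_k}^{\mathrm e},n_k)$, exhibiting the desired $x$ satisfying $|\alpha_{n_k}^{\mathrm e}(g_{x,m})(y_k)|>1/4$ and contradicting $y_k\notin Y'$. For this I would use the u.l.f.\ bound on balls of radius $(m+1)^2$ together with a maximal $2(m+1)^2$-separated net $S_k\subseteq\mathrm{int}(X_{n_k}^{\mathrm e},n_k)$ to decompose $\phi_k$ (up to a residual contraction of norm $\le 1/2$ supported outside $\mathrm{int}(X_{n_k}^{\mathrm e},n_k)$, by the Lemma's interior property) into a disjointly-supported combination of markers. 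Approximate multiplicativity on $M_{n_k,m}$ (Proposition~\ref{prop:them1}) then forces approximately orthogonal $\alpha$-images of these markers; so at the single point $y_k$ at most one marker-image contributes significantly, and it is bounded by the hypothesized $1/4$. Combining with approximate linearity for the residual summand, this forces $\alpha_{n_k}^{\mathrm e}(\phi_k)(y_k)<1/2-\eta$, the desired contradiction.

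The main obstacle is controlling the accumulated approximation error in this last step, since each invocation of approximate linearity of Proposition~\ref{prop:them1} carries an $O(\varepsilon)$ error and the decomposition of $\phi_k$ has many summands. The u.l.f.\ bound is what makes the marker multiplicity uniform in $k$, and the parameters $\varepsilon=1/24$ and $m>1/\varepsilon^2$ set up at the beginning of the theorem are designed precisely so that all accumulated errors from Propositions~\ref{prop:them1} and \ref{prop:them2} stay well below the $1/4$ gap between $1/2-\eta$ and the hypothesized $1/4$ upper bound. A subsidiary technical point is verifying that the combinations of markers used actually lie in the appropriate $M_{n_k,m'}$ classes where approximate linearity applies, which (as in Remark~\ref{remark:subseq}) permits passing to smaller $m'$ without invalidating the Proposition~\ref{prop:them1} bounds.
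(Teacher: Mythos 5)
There is a genuine gap. Two issues stand out, and the second is the fatal one.

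First, your contradiction hypothesis is the wrong negation. Coboundedness of $Y'$ is not equivalent to $Y\setminus Y'$ being finite (or bounded): $Y\setminus Y'$ can be infinite while every point of $Y$ stays within fixed distance $M$ of $Y'$. The correct negation is that there is a sequence $(y_k)$ with $d_Y(y_k,Y')\to\infty$. Your reformulation attempts to prove the strictly stronger statement that $Y\setminus Y'$ is finite, which there is no reason to expect. Related to this, your claim that far-apart points in $Y\setminus Y'$ give $\chi_A\in C_h(Y)$ is false: for $\chi_A$ to be slowly oscillating you need $A$ to be asymptotically far from its \emph{complement}, not merely to have its own points spread out, and points of $Y\setminus A$ (in particular points of $Y'$) may sit arbitrarily close to $A$. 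The paper instead builds a bump function $g\in C_h(Y)$ with $g(y_k)=1$ and $g\restriction Y'=0$, which uses precisely the hypothesis $d_Y(y_k,Y')\to\infty$.

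Second, and more seriously, the core step --- decomposing $\phi_k$ into disjointly supported markers and then summing the approximate orthogonality/linearity bounds to control $\alpha_{n_k}^\mathrm e(\phi_k)(y_k)$ pointwise --- does not go through. A general element of $M_{n_k,m}$ is not a combination of the marker functions $g_{x,m}$, and $\phi_k$ is not approximately constant on balls of radius $(m+1)^2$ (membership in $M_{n,m}$ only controls oscillation over distance $m$, and over distance $(m+1)^2$ the total variation can be of order $1$). Even granting some decomposition with residual, Proposition~\ref{prop:them1} only gives a \emph{pairwise} error of $\varepsilon$, so combining $N$ summands costs $O(N\varepsilon)$; the number of net points in $X_{n_k}^\mathrm e$ grows with $k$, so the error is not controlled by the uniform u.l.f.\ multiplicity bound (that bound only controls how many markers overlap at a single point of $X$, not how many are needed to cover $X_{n_k}^\mathrm e$). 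The pairwise approximate orthogonality $|\alpha(g)(y)||\alpha(g')(y)|\le\varepsilon$ likewise does not force the \emph{sum} of the images at $y_k$ to be small when there are many terms. The paper avoids all of this: it never attempts to bound $\alpha_{n_k}^\mathrm e(\phi_k)$ at the given bad point $y_k$. It instead picks a single $x_n$ with $h_{\mathrm e,k_n}(x_n)\ge 1/2$, uses approximate multiplicativity for the \emph{single} pair $(h_{\mathrm e,k_n},g_{x_n,m})$ to show $\|\alpha^\mathrm e_{k_n}(h_{\mathrm e,k_n})\alpha^\mathrm e_{k_n}(g_{x_n,m})\|>1/3$, extracts from this a (new) point $y$ at which both factors exceed $1/4$, observes that the marker factor forces $y\in W^\mathrm e_{k_n}\subseteq Y'$, and derives the contradiction from the fact that $\Lambda_\mathrm e(h_\mathrm e)$ --- being a lift of something dominated by $\pi_Y(g)$ with $g\restriction Y'=0$ --- must vanish asymptotically on $W^\mathrm e_n$. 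One marker, one application of approximate multiplicativity, no decomposition, no accumulated error. If you want to fix your argument, you should restate the hypothesis as $d_Y(y_k,Y')\to\infty$, replace $\chi_A$ by the bump function, and replace the marker decomposition step with this norm-of-product argument.
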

\begin{proof}
We argue by contradiction and assume there is a sequence $(y_n)\subseteq Y$ such that $d_Y(y_n,Y')\geq n$ for all $n$. Let $g\in C_h(Y)$ be a positive contraction such that $g(y_n)=1$ and $g\restriction Y'=0$. Since $\Phi$ is surjective, there is a positive contraction $h\in C_h(X)$ such that $\Phi(\pi_X(h))=\pi_Y(g)$. Let $h_\mathrm e=\sum h_{\mathrm e,n}$ and $h_\mathrm o=\sum h_{\mathrm o,n}$ be positive slowly oscillating contractions where, for $i\in\{\mathrm e,\mathrm o\}$ and $n\in\mathbb N$, $\supp(h_{ i,n})\subseteq X_n^{i}$ and such that we have $h-h_\mathrm e-h_\mathrm o\in C_0(X)$. These are given by Lemma~\ref{lem:decomposition}. Since $\Lambda_i$ lifts $\Phi$ on functions supported on $\bigcup X_n^i$, we have that $\Lambda_\mathrm e(h_\mathrm e)+\Lambda_\mathrm o(h_\mathrm o)-g\in C_0(Y)$. Since $\norm{\pi_Y(g)}=1$ and 
\[
\pi_Y(g)=\pi_Y(\Lambda_\mathrm e(h_\mathrm e))+\pi_Y(\Lambda_\mathrm o(h_\mathrm o)),
\]
there is $i\in \{\mathrm e,\mathrm o\}$ such that $\norm{\pi_Y(\Lambda_i(h_i))}\geq 1/2$. Assume $i=\mathrm e$. (The case where $\pi_Y(\Lambda_\mathrm o(h_\mathrm o))$ has large norm is treated in the same exact way.). Since $\Lambda_\mathrm e=\sum\alpha_n^\mathrm e$, we have that 
\[
\liminf_n\norm{\alpha^\mathrm e_n(h_{\mathrm e,n})}=\norm{\Lambda_\mathrm e(h_\mathrm e)}\geq 1/2.
\]
Since $\Phi(\pi_X(h_\mathrm e))\leq\pi_Y(g)$ and $\Lambda_\mathrm e(h_\mathrm e)$ lifts $\Phi(\pi_X(h_\mathrm e))$, we have that $|\Lambda_\mathrm e(h_\mathrm e)\restriction W_n^\mathrm e|\to 0$ as $n\to\infty$, and so $\limsup_n|\alpha^\mathrm e_n(h_\mathrm e)(W_n^\mathrm{e})|=0$. 
Since $\norm{\pi_X(h_\mathrm e)}\geq \norm{\pi_Y(\Lambda_\mathrm e(h_\mathrm e))}\geq 1/2$, we can find an increasing sequence of naturals $(k_n)$ with $k_0\geq (m+1)^2$ and points $x_n\in X^\mathrm e_{k_n}$ such that $h_{\mathrm e,k_n}(x_n)\geq 1/2-\varepsilon$. By Lemma~\ref{lem:decomposition} we have that $x_n\in \mathrm{int}(X_{k_n}^\mathrm e,k_n)$ and so $g_{x_n,m}\in M_{k_n,m,\bar X^{\mathrm e}}$. In particular, by our choice of $m$,
\[
\norm{\alpha_{k_n}^\mathrm e(h_{\mathrm e,k_n}) \alpha_{k_n}^\mathrm e(g_{x_n,m})}\geq \norm{h_{\mathrm e,k_n}g_{x_n,m}}-2\varepsilon \geq 1/2-3\varepsilon>1/3,
\]
which implies there is $y$ such that $|\alpha_{k_n}^\mathrm e(h_{\mathrm e,k_n})(y)|, | \alpha_{k_n}^\mathrm e(g_{x_n,m})(y)|>1/4$. Since $y\in Y_{k_n}^{\mathrm e}\subseteq Y'$ and $n$ is arbitrary, this contradicts that $\limsup_n|\alpha^\mathrm e_n(h_\mathrm e)(W_n^\mathrm{e})|=0$.
\end{proof}

We are ready to define the desired map $\varphi\colon Y\to X$. Using Claim~\ref{claim:cob}, we can fix $M$ such that for every $y\in Y$ there is $y'\in Y'$ with $d_Y(y,y')\leq M$. For $i\in\{\mathrm{e},\mathrm o\}$ define $\varphi^i\colon \bigcup W_n^i \to X$ by setting, for $y\in W_n^i$,
\[
\varphi^i(y)= x \text{ where } x \text{ is any point such that } y\in Y_{\alpha_n,g_{x,m},1/4}.
\]
Such $x$ exists as $W_n^i= \bigcup \{Y_{\alpha^i_n,g_{x,m},1/4}\mid x\in \mathrm{int}(X_n^i,n)\}$.

By Claim~\ref{claim:close}, $\varphi^\mathrm e$ and $\varphi^\mathrm o$ are close on the intersection of their domains, and therefore the map $\varphi$ defined by
\[
\varphi(y)=\begin{cases}\varphi^\mathrm e(y) & \text{ if } y\in \bigcup W_n^\mathrm e\\
\varphi^\mathrm o(y) &\text{ if } y\in Y'\setminus \bigcup W_n^\mathrm e
\end{cases}
\]
has domain $Y'$, codomain $X$, and it is close to both $\varphi^\mathrm e$ and $\varphi^\mathrm o$ on their respective domains. We extend $\varphi$ to $Y$, by sending $y\in Y\setminus Y'$ to $\varphi(y')$ where $y'\in Y'$ is such that $d_Y(y,y')\leq M$. Since, for $i\in\{\mathrm e,\mathrm o\}$, each $W_n^i$ is finite, $\varphi$ is proper.

\begin{claim}
The function $\varphi$ induces $\Phi$ as in Definition~\ref{def:trivial}. 
\end{claim}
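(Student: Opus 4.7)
By Lemma~\ref{lem:trivialbetter}, and given that $\varphi$ has just been shown to be proper, it suffices to verify that for every positive contraction $g\in C_h(X)$ and every $A\subseteq Y$ with $\norm{\pi_X(\chi_{\varphi[A]}g)}=1$, we have $\norm{\pi_Y(\chi_A)\Phi(\pi_X(g))}>0$. The strategy is to produce a sequence $(y_k)\subseteq A$ along which every lift of $\Phi(\pi_X(g))$ stays bounded below in absolute value. Fix such $A$ and $g$ and pick $x_k\in\varphi[A]$ with $g(x_k)\to 1$ and $x_k\to\infty$; lift these to $y_k\in A$ with $\varphi(y_k)=x_k$, arranging $y_k\to\infty$ by properness. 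Using the cobounded-range constant $M$ from Claim~\ref{claim:cob}, associate to each $y_k$ a point $y'_k\in Y'$ with $d_Y(y_k,y'_k)\leq M$ and $\varphi(y'_k)=x_k$. After extracting a subsequence we may assume $y'_k\in W_{n_k}^{i}$ for a fixed $i\in\{\mathrm{e},\mathrm{o}\}$, say $i=\mathrm{e}$ (the other case is symmetric); by the construction of $\varphi^{\mathrm{e}}$ this forces $x_k\in\mathrm{int}(X_{n_k}^{\mathrm{e}},n_k)$ and $|\alpha_{n_k}^{\mathrm{e}}(g_{x_k,m})(y'_k)|>1/4$.

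Decompose $g=g_{\mathrm{e}}+g_{\mathrm{o}}$ modulo $C_0(X)$ via Lemma~\ref{lem:decomposition}, and set $h:=\Lambda_{\mathrm{e}}(g_{\mathrm{e}})+\Lambda_{\mathrm{o}}(g_{\mathrm{o}})$, a lift of $\Phi(\pi_X(g))$. Since $h$ is slowly oscillating and $d_Y(y_k,y'_k)\leq M$, we have $h(y_k)-h(y'_k)\to 0$, so it suffices to bound $|h(y'_k)|$ away from $0$. Because $\Phi$ is positivity-preserving, $\Lambda_{\mathrm{o}}(g_{\mathrm{o}})(y'_k)\geq -o(1)$; and disjointness of the $Y_n^{\mathrm{e}}$ gives $\Lambda_{\mathrm{e}}(g_{\mathrm{e}})(y'_k)=\alpha_{n_k}^{\mathrm{e}}(g_{\mathrm{e},n_k})(y'_k)$. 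By a further pass to a subsequence---and, in the exceptional case where $x_k$ lies in the transition zone of its $\mathrm{e}$-annulus, switching to the symmetric $\mathrm{o}$-analysis, permitted on the $\mathrm{e}$--$\mathrm{o}$ overlap by Proposition~\ref{prop:them2}---we may assume $g_{\mathrm{e},n_k}(x_k)\to 1$. Hence $g_{\mathrm{e},n_k}\to 1$ uniformly on $\supp(g_{x_k,m})$, and the ``error'' $e_k:=(1-g_{\mathrm{e},n_k})g_{x_k,m}$ is dominated pointwise by $\eta_k\,g_{x_k,m}$ for some $\eta_k\to 0$.

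Apply approximate additivity and multiplicativity (Proposition~\ref{prop:them1}, clauses \eqref{clause3} and \eqref{clause2}) to the identity $g_{x_k,m}=g_{\mathrm{e},n_k}g_{x_k,m}+e_k$, and use multiplicativity on the first summand to obtain, up to an error of order $\varepsilon$,
\[
\alpha_{n_k}^{\mathrm{e}}(g_{x_k,m})(y'_k)\approx \alpha_{n_k}^{\mathrm{e}}(g_{\mathrm{e},n_k})(y'_k)\,\alpha_{n_k}^{\mathrm{e}}(g_{x_k,m})(y'_k)+\alpha_{n_k}^{\mathrm{e}}(e_k)(y'_k).
\]
The main technical hurdle is upper-bounding $\norm{\alpha_{n_k}^{\mathrm{e}}(e_k)}$: starting from $e_k\leq \eta_k\,g_{x_k,m}$ pointwise, approximate positivity-preservation of $\alpha_{n_k}^{\mathrm{e}}$ (derived from the positivity of $\Phi$ together with approximate additivity) and approximate scalar linearity together yield $\norm{\alpha_{n_k}^{\mathrm{e}}(e_k)}\leq \eta_k+O(\varepsilon)=o(1)+O(\varepsilon)$. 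Substituting into the displayed approximation and using $|\alpha_{n_k}^{\mathrm{e}}(g_{x_k,m})(y'_k)|>1/4$, one concludes that $|\alpha_{n_k}^{\mathrm{e}}(g_{\mathrm{e},n_k})(y'_k)|$---hence $|h(y'_k)|$, and in turn $|h(y_k)|$---is bounded below by a positive constant for $k$ large. This gives $\norm{\pi_Y(\chi_A)\Phi(\pi_X(g))}>0$, completing the proof that $\varphi$ induces $\Phi$.
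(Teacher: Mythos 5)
Your overall strategy mirrors the paper's (pick $y_k\in A$ with $g(\varphi(y_k))\to 1$, use the marker functions $g_{x_k,m}$ and Proposition~\ref{prop:them1} to force a lower bound on the lift of $\Phi(\pi_X(g))$ along $y_k$), but the way you handle the parity decomposition $g=g_{\mathrm e}+g_{\mathrm o}$ introduces a genuine gap. You set $h=\Lambda_{\mathrm e}(g_{\mathrm e})+\Lambda_{\mathrm o}(g_{\mathrm o})$ and need to control the error $e_k=(1-g_{\mathrm e,n_k})g_{x_k,m}$, for which you claim that, after a subsequence, $g_{\mathrm e,n_k}(x_k)\to 1$. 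This is simply not available: from $g(x_k)\to 1$ one only gets $g_{\mathrm e}(x_k)+g_{\mathrm o}(x_k)\to 1$, and both summands can hover around $1/2$, so that neither $g_{\mathrm e,n_k}(x_k)$ nor $g_{\mathrm o,n_k}(x_k)$ tends to $1$. In that regime $\norm{e_k}$ does not go to zero, the approximate-multiplicativity estimate no longer pins down $\alpha_{n_k}^{\mathrm e}(g_{\mathrm e,n_k})(y'_k)$, and the argument stalls. Your fallback, ``in the exceptional case $\dots$ switching to the symmetric $\mathrm o$-analysis, permitted $\dots$ by Proposition~\ref{prop:them2},'' does not rescue this: Proposition~\ref{prop:them2} only compares $\alpha_n^{\mathrm e}$ and $\alpha_n^{\mathrm o}$ on $M_{n,m,\bar X^{\mathrm e}}\cap M_{n,m,\bar X^{\mathrm o}}$ at the \emph{same} index $n$, while the relevant $\mathrm o$-annulus containing $x_k$ may be $X_{n_k-1}^{\mathrm o}$ rather than $X_{n_k}^{\mathrm o}$ (when $x_k$ is near the inner boundary of $X_{n_k}^{\mathrm e}$), and even when the indices do match, the ``exceptional case'' you describe is precisely the one where neither parity has $g_{i,n_k}(x_k)\to 1$, so switching buys nothing.

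The paper avoids this entirely by not keeping both parity parts in play: after locating $x_k$ in $\mathrm{int}(X_{n_k}^{\mathrm e},n_k)$, it replaces $g$ by a positive contraction supported on $\bigcup X_n^{\mathrm e}$ alone (decompose via Lemma~\ref{lem:decomposition}, then bump up $g_{\mathrm e}$ by functional calculus, justified because $\norm{\pi_Y(\chi_A)\Phi(\pi_X(g_{\mathrm e}))}=0$ follows from positivity of $\Phi$). That guarantees that the new $g$ satisfies $g_{n_k}g_{x_k,m}\approx g_{x_k,m}$, which is the multiplicative identity the estimate actually needs. To repair your version you would have to perform an analogous functional-calculus reduction \emph{before} forming $h$, at which point the $\mathrm o$-term disappears and your argument collapses to the paper's. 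Two minor further points: your appeal to ``approximate positivity-preservation of $\alpha_{n_k}^{\mathrm e}$'' is not among the clauses of Proposition~\ref{prop:them1} and would need its own derivation; and the lower bound you obtain is on $|h(y'_k)|$ for auxiliary points $y'_k$, so you still need to transfer back to $y_k\in A$ via slow oscillation of a lift of $\Phi(\pi_X(g))$, which you gesture at but whose lift must be chosen consistently with $h$.
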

\begin{proof}
Suppose this is not the case. By Lemma~\ref{lem:trivialbetter} we can find a positive contraction $g\in C_h(X)$ and $A\subseteq Y$ such that 
\[
\norm{\pi_X(\chi_{\varphi[A]}g)}=1 \text{ but } \norm{\pi_Y(\chi_{A})\Phi(\pi_X(g))}=0.
\]
Without loss of generality we can assume that $A\subseteq Y'$, as $Y'$ is cobounded in $Y$. Let $(y_k)\subseteq A$ be a sequence of distinct points such that $g(x_k)\to 1$ as $k\to\infty$, where $x_k=\varphi(y_k)$. Since, modulo finite, $X$ equals $\bigcup_n\mathrm{int}(X_n^\mathrm e,n)\cup \mathrm{int}(X_n^\mathrm o,n)$ we can assume, by eventually passing to a subsequence, that there is an infinite increasing sequence $(n_k)$ such that $x_k\in\mathrm{int}(X_{n_k}^\mathrm e,n_k)$ for some $k$. (The other case, where we have to deal with sets of the form $X_n^\mathrm{o}$, is treated in the same exact way noticing that $\varphi^\mathrm e$ and $\varphi^o$ are close on their common domain.) By writing $g=g_\mathrm e+g_\mathrm o$ as in Lemma~\ref{lem:decomposition}, and possibly applying functional calculus, we can assume that $\supp(g)\subseteq\bigcup X_n^\mathrm e$. Write $g=\sum g_n$ where $\supp(g_n)\subseteq X_n^\mathrm e$ for all $n$. Since $\norm{\pi_Y(\chi_{A})\Phi(\pi_X(g))}=0$, $\lim_k|\alpha_{n_k}^\mathrm e(g_{n_k})(y_k)|=0$. 

Since $g(x_k)\to1$ and $g$ is slowly oscillating, $\lim_k\norm{g_{n_k}g_{x_k,m}-g_{x_k,m}}= 0$. By our choice of $m$ and $\varepsilon$, then, eventually, 
\[
\norm{\alpha_{n_k}^\mathrm e(g_{n_k})\alpha_{n_k}^\mathrm e(g_{x_k,m})-\alpha_{n_k}^\mathrm e(g_{x_k,m})}\leq 3\varepsilon.
\]
Since $\varphi(y_k)=x_k$, we have $y_k\in Y_{\alpha^\mathrm e_{n_k},g_{x_k,m},1/4}$, which implies that $|\alpha_{n_k}^\mathrm e(g_{x_k,m})(y_k)|>1/4$, and therefore $|\alpha_{n_k}^\mathrm e(g_{n_k})\alpha_{n_k}^\mathrm e(g_{x_k,m})(y_k)|\geq 1/4-3\varepsilon\geq 1/8$. As $|\alpha_{n_k}^\mathrm e(g_{n_k})(y_k)|\to 0$ when $k\to\infty$ we have a contradiction.
\end{proof}
This shows that the function $\varphi$ induces $\Phi$, which is therefore trivial.
\end{proof}

\section{Nice liftings: Local existence}\label{S.locallifts}

In this section, we show how under $\OCA$ and $\MA$s unital $^*$-homomorphisms between algebras of functions over Higson coronas admit well-behaved local liftings, and in particular that the coarse weak Extension Principle for surjective maps holds. 

Recall that if $X$ is a u.l.f.\ metric space with a distinguished point $x_0$, the sparse sequences $\bar X^\mathrm e$ and $\bar X^\mathrm o$ were defined in Definition~\ref{def:sparsesequences}. If $U\subseteq \nu Y$ is clopen as witnessed by the projection $\chi_U\in C_\nu(Y)$, we let $\tilde U\subseteq Y$ be such that $\pi_Y(\chi_{\tilde U})=\chi_U$. Such $\tilde U$ always exists, as every projection in $\ell_\infty(Y)/c_0(Y)$ lifts to a projection in $\ell_\infty(Y)$, see e.g. \cite[Lemma 3.1.13]{Fa:STCstar}. 

\begin{theorem}\label{thm:local}
Assume $\OCA$ and $\MA$. Let $\Phi\colon C_\nu(X)\to C_\nu(Y)$ be a unital $^*$-homomorphism whose dual is the continuous function $\tilde\varphi\colon \nu Y\to \nu X$. Then there is a clopen set $U\subseteq\nu Y$ such that
\begin{itemize}
\item $\chi_U\Phi\chi_U\colon C_\nu(X)\to C_\nu(\tilde U)$ is of product form on the sparse sequences $\bar X^\mathrm e$ and $\bar X^\mathrm o$, and
\item $\tilde\varphi[\nu Y\setminus U]$ is nowhere dense in $\nu X$.
\end{itemize} 
\end{theorem}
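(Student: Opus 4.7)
The plan is to reduce the statement to the $\OCA+\MA$ lifting theorems for $^*$-homomorphisms between reduced products of finite dimensional C*-algebras, as established in \cite{mckenney2018forcing} and refined in \cite{TrivIsoMetric}. For $i\in\{\mathrm e,\mathrm o\}$, let $A_i\subseteq C_\nu(X)$ denote the $^*$-subalgebra of classes $\pi_X(g)$ with $g\in C_h(X)$ supported on $\bigcup_n X_n^i$, and set $\Phi_i=\Phi\restriction A_i$. Since $\bar X^i$ is sparse and each $X_n^i$ is finite, Proposition~\ref{prop:Mf} and Lemma~\ref{lem:orderzero} together show that $A_i$ naturally embeds into the reduced product $\prod_n\mathbb C^{X_n^i}/\bigoplus_n\mathbb C^{X_n^i}$, and the sets $M_{f,\bar X^i}$ for $f\in\mathbb N^{\mathbb N\uparrow}$ exhaust the slowly oscillating contractions supported on $\bigcup X_n^i$.

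First I would, for each fixed $i$, produce Borel-measurable liftings of $\Phi_i$ on the compact separable metrizable sets $M_{n,m,\bar X^i}$, and then apply $\OCA$ to an open coloring on $[M_{n,m,\bar X^i}]^2$ designed to witness failure of approximate product-form behavior of the lifts. This is the standard dichotomy strategy of \cite[\S4]{mckenney2018forcing} and \cite{TrivIsoMetric}, which, in the present abelian setting, produces a countable partition of $\mathbb N$ each of whose pieces carries a product-form lift. An $\MA$/bounding argument, invoking $\mathfrak b=\omega_2$ as anticipated by Claim~\ref{claim:uniformising}, then uniformizes these countably many pieces into a single cofinal $S_i\subseteq\mathbb N$ on which $\Phi_i$ admits a lift $\Lambda_i=\sum_{n\in S_i}\alpha_n^i$ with $\alpha_n^i\colon\mathbb D^{X_n^i}\to\mathbb D^{Y_n^i}$ and the $Y_n^i$ pairwise disjoint finite subsets of $Y$. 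The complementary indices correspond to a projection in $C_\nu(Y)$, that is, to a clopen $U_i\subseteq\nu Y$ such that $\chi_{U_i}\Phi_i\chi_{U_i}$ is of product form on $\bar X^i$ while $\tilde\varphi[\nu Y\setminus U_i]$ is nowhere dense in the closure of $\bigcup_n X_n^i$ in $\nu X$.

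Second, I would take $U=U_{\mathrm e}\cap U_{\mathrm o}$. Then $\chi_U\Phi\chi_U$ is still of product form on both $\bar X^{\mathrm e}$ and $\bar X^{\mathrm o}$: passage to a smaller clopen subset only replaces each $Y_n^i$ by its intersection with $\tilde U$, and by Proposition~\ref{prop:them1} this does not disturb approximate linearity or multiplicativity. For nowhere density of $\tilde\varphi[\nu Y\setminus U]$ in $\nu X$: as $\bar X^{\mathrm e}$ and $\bar X^{\mathrm o}$ together cover $X$ outside a bounded set, the closures of $\bigcup X_n^{\mathrm e}$ and $\bigcup X_n^{\mathrm o}$ cover $\nu X$, and since $\tilde\varphi[\nu Y\setminus U]\subseteq\tilde\varphi[\nu Y\setminus U_{\mathrm e}]\cup\tilde\varphi[\nu Y\setminus U_{\mathrm o}]$, with each summand nowhere dense in the corresponding half, a short argument using that finite unions of nowhere dense sets are nowhere dense yields the conclusion.

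The principal obstacle will be the application of the lifting machinery. The results of \cite{mckenney2018forcing} and \cite{TrivIsoMetric} are stated for $^*$-homomorphisms whose domain is the full reduced product, whereas our $\Phi_i$ is defined only on the subalgebra $A_i$ of slowly oscillating classes. The technical work consists in verifying that the $\OCA$-based coloring dichotomies and the subsequent $\MA$-uniformization step go through with this restricted domain, the key input being that the sets $M_{f,\bar X^i}$, as $f$ ranges over $\mathbb N^{\mathbb N\uparrow}$, are rich enough both to witness any Borel failure of product-form structure in the lifts and to enable the $\mathfrak b$-bounding argument that combines countably many local product-form decompositions into a global one. Everything else, including the passage from the lift on $\bigcup X_n^i$ to a clopen $U_i\subseteq\nu Y$ with nowhere dense complementary image, is standard once this lifting step is in place.
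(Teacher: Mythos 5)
Your overall strategy is in the right spirit, and the second half of your plan (applying the product-form lifting theorem of \cite{TrivIsoMetric} to each reduced product $N_f=M_f/\bigoplus M_{n,f(n)}$ and then uniformizing over $f\in\mathbb N^{\mathbb N\uparrow}$ with an $\OCA$ coloring and a $\mathfrak b$-bounding argument) is essentially what the paper does. But there is a genuine gap in the first half: you never explain where the sets $Y_n^i\subseteq Y$ and the clopen set $U$ actually come from, and the obstacle you flag at the end (that $\Phi_i$ is only defined on the subalgebra $A_i$, not a full reduced product) is precisely what the paper resolves with a step that your proposal does not contain.

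The paper's resolution is a separate, preliminary application of the \emph{order-zero} lifting theorem (Theorem~\ref{thm:liftcpc}). One introduces marker functions $g_{n,i}$ supported near $X_n^i$, uses Lemma~\ref{lem:orderzero} to turn $S\mapsto\pi_X(g_{S,i})$ into an order-zero positive contraction $\sigma_i\colon\ell_\infty/c_0\to C_\nu(X)$, and then lifts the order-zero map $\rho_i=\Phi\circ\sigma_i\colon\ell_\infty/c_0\to C_\nu(Y)$ on a nonmeager dense ideal $\mathcal I_i$ to a strongly continuous, positive, order-zero $\tilde\rho_i\colon\ell_\infty\to\ell_\infty(Y)$, with pieces $q_{n,i}=\tilde\rho_i(p_n)$. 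The sets $Y_n^i=\{y\mid q_{n,i}(y)>1/2\}$ and $\tilde U=\bigcup_n Y_n^{\mathrm e}\cup Y_n^{\mathrm o}$ are then defined from the $q_{n,i}$, and the $q_{S,i}$ are used to prove that $\chi_{\tilde U}$ is slowly oscillating (Claim~\ref{claim:cutting}), that $\tilde\varphi[\nu Y\setminus U]$ is nowhere dense, and --- crucially --- that the modified lift of $\Phi^U$ is \emph{coordinate fixing} (Claim~\ref{claim:rightsupport}). This coordinate-fixing property is the hypothesis you need in order to invoke Theorem~\ref{thm:liftben} on each $N_f$, and it is exactly what is missing from your proposal. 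Your assertion that ``the complementary indices correspond to a projection in $C_\nu(Y)$'' is unjustified: a cofinal $S_i\subseteq\mathbb N$ on which a product-form lift exists does not by itself give a slowly oscillating projection, nor does it tell you where in $Y$ the lifted functions live. Also note that the paper takes a single $\tilde U$ as a union, not $U_{\mathrm e}\cap U_{\mathrm o}$; and the nowhere-density claim for each ``half'' that you invoke would also need the $q_{S,i}$ to prove. So the key missing idea is the preliminary order-zero lifting that builds the geometry on the $Y$-side before any product-form lifting of $\Phi$ can even be formulated.
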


Before moving on to the proof of Theorem~\ref{thm:local}, we isolate few of its corollaries. 
\begin{theorem}\label{thm:OCAwep}
Assume $\OCA$ and $\MA$. Then $\cWEP(s)$ holds.
\end{theorem}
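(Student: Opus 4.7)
The plan is to combine Theorem~\ref{thm:local}, which supplies local liftings of product form under $\OCA$ and $\MA$, with Theorem~\ref{thm:localtoglobal}, which converts product form on both sparse sequences $\bar X^\mathrm{e}$ and $\bar X^\mathrm{o}$ into global triviality provided one has surjectivity. Once these two results are in hand, the derivation of $\cWEP(s)$ will be almost immediate.

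First I would fix a unital surjective $^*$-homomorphism $\Phi\colon C_\nu(X)\to C_\nu(Y)$ with dual continuous injection $\tilde\varphi\colon\nu Y\to\nu X$, and apply Theorem~\ref{thm:local} to obtain a clopen $U\subseteq\nu Y$ (with corresponding lift $\tilde U\subseteq Y$) such that $\Phi^U=\chi_U\Phi\chi_U\colon C_\nu(X)\to C_\nu(\tilde U)$ is of product form on both $\bar X^\mathrm{e}$ and $\bar X^\mathrm{o}$, and such that $\tilde\varphi[\nu Y\setminus U]$ is nowhere dense in $\nu X$.

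The only substantive check is that $\Phi^U$ is itself a unital surjective $^*$-homomorphism, so that the hypotheses of Theorem~\ref{thm:localtoglobal} (with $Y$ replaced by $\tilde U$) are met. Unitality is automatic, since $\Phi^U(1)=\chi_U\Phi(1)\chi_U=\chi_U$ is the unit of $C_\nu(\tilde U)$. For surjectivity I would identify $C_\nu(\tilde U)$ with $\chi_U C_\nu(Y)$ and note that, given $g\in\chi_U C_\nu(Y)$, surjectivity of $\Phi$ yields some $f\in C_\nu(X)$ with $\Phi(f)=g$, whence $\Phi^U(f)=\chi_U g=g$. Applying Theorem~\ref{thm:localtoglobal} to $\Phi^U$ then gives that $\Phi^U$ is trivial; combined with the nowhere-density clause above, this is exactly $\cWEP(\Phi)$, and since $\Phi$ was an arbitrary unital surjective $^*$-homomorphism, $\cWEP(s)$ follows.

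The real obstacle in the overall argument is of course Theorem~\ref{thm:local} itself, which is where the axioms $\OCA$ and $\MA$ are consumed through the lifting results of \cite{mckenney2018forcing} and \cite{TrivIsoMetric}; once that theorem is available, the deduction of $\cWEP(s)$ is a formal consequence of the local-to-global machinery developed in \S\ref{S.localtoglobal} and requires no further set-theoretic input.
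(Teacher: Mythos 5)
Your proposal is correct and follows the same route as the paper: apply Theorem~\ref{thm:local} to obtain the clopen $U$ with $\Phi^U$ of product form on both sparse sequences and $\tilde\varphi[\nu Y\setminus U]$ nowhere dense, then feed $\Phi^U$ into Theorem~\ref{thm:localtoglobal}. The only additional content you supply — checking that $\Phi^U$ is unital and surjective so Theorem~\ref{thm:localtoglobal} applies — is a detail the paper leaves implicit, and your verification of it (identifying $C_\nu(\tilde U)$ with $\chi_U C_\nu(Y)$ and cutting down a preimage) is correct.
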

\begin{proof}
Fix u.l.f.\ metric spaces $X$ and $Y$, and a surjective $^*$-homomorphism $\Phi\colon C_\nu(X)\to C_\nu(Y)$ with dual $\tilde\varphi\colon \nu Y\to\nu X$. By Theorem~\ref{thm:local}, we can find a clopen set $U\subseteq\nu Y$ such that $\chi_U\Phi\chi_U\colon C_\nu(X)\to C_\nu(\tilde U)$ is of product form on the sparse sequences $\bar X^\mathrm e$ and $\bar X^\mathrm o$, and $\tilde\varphi[\nu Y\setminus U]$ is nowhere dense in $\nu X$. By Theorem~\ref{thm:localtoglobal}, $\chi_U\Phi\chi_U$ is trivial. This shows that $\cWEP(\Phi)$ holds.
\end{proof}
\begin{proof}[Proof of Theorems~\ref{thm:main} and \ref{thm:main2}]
Fix u.l.f.\ metric spaces $X$ and $Y$, and a surjective $^*$-homomorphism $\Phi\colon C_\nu(X)\to C_\nu(Y)$ with dual $\tilde\varphi\colon \nu Y\to\nu X$. 

To prove Theorem~\ref{thm:main}, assume $\Phi$ is an isomorphism. In this case, the set $U$ given by the $\cWEP$ localised at $\Phi$ equals $\nu Y$, as $\tilde\varphi[\nu Y\setminus U]$ is clopen and nowhere dense, and therefore empty. This shows that $\Phi$ is trivial, and therefore Proposition~\ref{prop:trivialhoms} gives a coarse equivalence $Y\to X$.

For Theorem~\ref{thm:main2}, we simply apply Proposition~\ref{prop:trivialhoms} to the surjective trivial unital $^*$-homomorphism $\chi_U\Phi\chi_U$ and obtain the desired coarse embedding $\tilde U\to Y$.
\end{proof}

A key use of $\OCA$ and $\MA$ in the proof of Theorem~\ref{thm:local} is to get access to two important lifting results for maps between quotient structures (Theorems~\ref{thm:liftben} and \ref{thm:liftcpc}). %($\OCA$ will also be used directly at the end of the proof of Theorem~\ref{thm:local}, to uniformize liftings obtained on certain reduced products contained in $C_h(X)$.) 
To state them, we first recall some terminology. 
\begin{itemize}
\item If $(M_n,d_n)$ are metric spaces of uniformly bounded diameter, the reduced product $\prod_nM_n/\Fin$ is the metric space obtained by identifying two sequences in $\prod_nM_n$ when they get closer and closer, i.e., quotienting $\prod_n M_n$ by the equivalence relation $\sim$ defined by 
\[
(a_n)\sim (b_n)\text{ if and only if }\lim_n d_n(a_n,b_n)= 0.
\]
 We denote by $\pi_M\colon\prod_nM_n\to\prod_nM_n/\Fin$ the canonical quotient map. We often write $[a]$ for $\pi_M(a)$, for $a\in\prod_nM_n$.
\item Again if $(M_n,d_n)$ are metric spaces of uniformly bounded diameter and $S\in\mathcal P(\mathbb N)/\Fin$, we have a pseudometric $d_S$ on $\prod_nM_n/\Fin$ which generalise to the metric setting the equivalence relation given by `being eventually equal on indices in $S$'. Specifically, if $a=(a_n)$ and $b=(b_n)$ are elements of $\prod_nM_n$, we let 
\[
d_S([a],[b])=\limsup_{n\in \dot S}d_n(a_n,b_n),
\]
where $\dot S$ is any lift of $S$ in $\mathcal P(\bbN)$.
It is routine to check that $d_S$ does not depend on the choice of the representatives for $[a]$ and $[b]$ nor on that of the lift $\dot S$. We write $[a]=_S[b]$ for $d_S([a],[b])=0$.
\item A function $\rho\colon \prod_nM_n/\Fin\to\prod_nN_n/\Fin$ between reduced products of sequences of metric spaces of uniformly bounded diameter is \emph{coordinate fixing} if 
\[
a=_Sb \Rightarrow \rho(a)=_S\rho(b)
\]
for all $a,b\in\prod_nM_n/\Fin$ and $S\in\mathcal P(\bbN)/\Fin$.

We say that $\rho$ is of \emph{product form} if there are maps $\alpha_n\colon M_n\to N_n$ such that $\prod_n\alpha_n\colon\prod_nM_n\to\prod_nN_n$ lifts $\rho$.
\end{itemize}

The following is the main result of \cite{TrivIsoMetric}:
\begin{theorem}\label{thm:liftben}
Assume $\OCA$ and $\MA$. Then all coordinate fixing functions between reduced products of sequences of separable metric spaces of uniformly bounded diameter are of product form.
\end{theorem}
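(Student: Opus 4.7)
The plan is to construct, under $\OCA$ and $\MA$, maps $\alpha_n\colon M_n\to N_n$ such that $\prod_n\alpha_n$ lifts $\rho$. Since each $M_n$ is separable, fix countable dense $D_n\subseteq M_n$. Using uniform boundedness of the diameters, a product-form lift defined on $\prod_nD_n/\Fin$ extends to one on $\prod_nM_n/\Fin$ by uniform continuity, so it suffices to build the $\alpha_n$ on the countable sets $D_n$. For each countable $\mathcal{A}\subseteq\prod_nD_n$, fix representatives $\rho^*(a)\in\prod_nN_n$ of $\rho([a])$ for $a\in\mathcal{A}$. Coordinate fixing forces a strong coherence: whenever $a,b\in\mathcal{A}$ agree on $S\in\mathcal{P}(\bbN)/\Fin$, the lifts $\rho^*(a)$ and $\rho^*(b)$ agree on $S$ up to a tail tending to $0$. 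Consequently for each $n$ we obtain partial \emph{approximate} maps $\alpha_n^{\mathcal{A}}\colon\{a_n\mid a\in\mathcal{A}\}\to N_n$, well-defined modulo an error going to $0$ in $n$. The main task is to glue these local approximate liftings into honest total maps.

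The first global step uses $\OCA$ to uniformize local liftings across $\omega_1$ many nested countable cores $\mathcal{A}_\xi$, $\xi<\omega_1$. Parametrize the approximations $\bar\alpha^\xi=(\alpha_n^{\mathcal{A}_\xi})_n$ as elements of a separable metrizable space $\mathcal{X}$ (enabled by separability of $M_n$ and $N_n$) and define an open coloring on $[\mathcal{X}]^2$ by declaring $\{\bar\alpha^\xi,\bar\alpha^\eta\}\in K_0$ to be \emph{robustly incompatible}: there are infinitely many $n$ and common-domain points $x$ on which $\bar\alpha^\xi$ and $\bar\alpha^\eta$ differ by more than a fixed positive amount. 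An uncountable $K_0$-homogeneous family would, after a $\Delta$-system reduction and a diagonal coding of the incompatibility witnesses into a single sequence $a\in\prod_nM_n$, produce two distinct $=_S$-values of $\rho([a])$ for the same $S$, contradicting coordinate fixing. $\OCA$ therefore returns a partition of $\omega_1$ into countably many $K_1$-homogeneous (pairwise compatible) classes of local liftings.

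Next, $\MA$ is used to stitch a $K_1$-homogeneous class $\mathcal{C}$ into a global lift. Consider the poset $\mathbb{P}$ whose conditions are finite partial specifications $p\subseteq\bigsqcup_n(D_n\times N_n)$ that are compatible (within a tolerance decreasing in $n$) with every $\bar\alpha^\xi\in\mathcal{C}$; the $K_1$-homogeneity of $\mathcal{C}$ is exactly what makes $\mathbb{P}$ ccc, since any pair of conditions can be amalgamated by taking their union. The dense sets to be met are: for each $x\in\bigcup_nD_n$, a dense set of conditions deciding $\alpha_n(x)$; and for each $a\in\bigcup_\xi\mathcal{A}_\xi$, a dense set forcing the specification to agree with $\rho^*(a)$ on a long enough tail. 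There are $\aleph_1$ of these, so $\MA$ provides a filter yielding the maps $\alpha_n$; one checks that $\prod_n\alpha_n$ lifts $\rho$ by using coordinate fixing together with the fact that each $[a]\in\prod_nM_n/\Fin$ lies in some core $\mathcal{A}_\xi$.

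The principal obstacle will be the design of the coloring $K_0$ in the second step. Unlike in the discrete Boolean setting of \cite{mckenney2018forcing}, where incompatibility is a crisp $\{0,1\}$-valued condition, here the approximations $\alpha_n^{\mathcal{A}}$ are only defined up to summable metric errors, and the coloring must (i) be genuinely open in a separable metrizable parametrization, (ii) be invariant under the ambiguities inherent in choosing $\rho^*(a)$, and (iii) yet be rigid enough that the $K_0$-homogeneous branch produces a violation of coordinate fixing — not merely an innocuous metric perturbation. Threading this needle, and extending the resulting forcing-and-absorption argument to handle metric data, is the core technical content of \cite{TrivIsoMetric}.
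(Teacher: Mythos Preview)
The paper does not prove this theorem: it is introduced as ``the main result of \cite{TrivIsoMetric}'' and used as a black box. There is therefore no proof in the paper to compare your proposal against, and your own final paragraph effectively concedes this by locating ``the core technical content'' in \cite{TrivIsoMetric}. What you have written is an architectural sketch of the expected strategy (local liftings on countable cores, an $\OCA$ dichotomy to force compatibility, an $\MA$/ccc amalgamation), which is the right shape for results in this family, but it is not a proof.

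One concrete gap worth flagging in the sketch itself: the opening reduction to countable dense $D_n\subseteq M_n$ ``by uniform continuity'' is not justified. Neither $\rho$ nor the maps $\alpha_n$ in the definition of product form carry any continuity hypothesis; they are arbitrary set maps. If you only construct $\alpha_n\colon D_n\to N_n$ lifting $\rho$ on $\prod_n D_n/\Fin$, there is no automatic extension to $M_n$: for $(x_n)\in\prod_n M_n$ one can choose $(d_n)\in\prod_n D_n$ with $[x_n]=[d_n]$, so $\rho([x_n])=[\alpha_n(d_n)]$, but to conclude $[\alpha_n(x_n)]=[\alpha_n(d_n)]$ you would need the $\alpha_n$ to be asymptotically equicontinuous, which is not part of the data. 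Any genuine proof must either work on all of $M_n$ from the start or establish this extra regularity of the $\alpha_n$ along the way.
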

For the next result, we again need additional terminology, and a few reminders. 
\begin{itemize}
\item Recall that $\mathcal P(\bbN)$ is endowed with the natural (Polish) compact metric product topology. All topological terminology refers to such topology. We will in particular focus on nonmeager and dense ideals in $\mathcal P(\bbN)$. For ideals, nonmeagerness can be characterised combinatorially (this is due to Jalali-Naini and Talagrand, see \cite[Theorem~3.10.1]{FarahBook2000}). An ideal $\mathcal I\subseteq\mathcal P(\mathbb N)$ is dense if every infinite $A\subseteq\mathbb N$ contains an infinite set in $\mathcal I$.
\item Let again $M_n$ be a sequence of metric spaces, and fix a point $0_n\in M_n$. If $a=(a_n)\in\prod_nM_n$, the support of $a$ is the set $\supp(a)=\{n\mid a_n\neq 0_n\}$. If $\rho\colon\prod_nM_n/\Fin\to\prod_nN_n/\Fin$ is a function and $\mathcal I\subseteq \mathcal P(\bbN)$ is an ideal, we say that a map $\tilde\rho\colon\prod_nM_n\to\prod_nN_n$ lifts $\rho$ on $\mathcal I$ if
\[
[\tilde\rho(a)]=\rho([a]) \text{ whenever }\supp(a)\in \mathcal I.
\]
\end{itemize}
The following result, although well-known to a few experts, was never specifically stated in these terms. For the reader's convenience, we provide an argument on how it can be derived from the results of \cite{vignati2018rigidity} and \cite{mckenney2018forcing}. If $S\subseteq\bbN$, we write $\chi_S\in\ell_\infty$ for the characteristic function on $S$. Recall that $\mathbb D^\mathbb N$ is identified with the unit ball of $\ell_\infty$.
\begin{theorem}\label{thm:liftcpc}
Assume $\OCA$ and $\MA$. Suppose that $\rho\colon\ell_\infty/c_0\to\ell_\infty/c_0$ is a positive order zero contraction. Then there is a nonmeager dense ideal $\mathcal I\subseteq\mathcal P(\bbN)$ containing all finite sets and a strongly continuous positive contractive order zero map $\tilde\rho\colon\ell_\infty\to\ell_\infty$ lifting $\rho$ on $\mathcal I$.
\end{theorem}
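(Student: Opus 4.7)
The strategy is to reduce to a known lifting theorem for $^*$-homomorphisms of $\ell_\infty/c_0$ via the Winter--Zacharias structure theorem for order zero maps. In the abelian setting, any positive contractive order zero map $\rho\colon \ell_\infty/c_0 \to \ell_\infty/c_0$ decomposes as $\rho(a) = h \cdot \Phi(a)$, where $h = \rho(1)$ is a positive contraction and $\Phi$ is a unital $^*$-homomorphism. Strictly speaking, the structure theorem produces $\Phi$ with codomain the multiplier algebra $C_b(V)$ of the hereditary subalgebra corresponding to the open support $V = \{h > 0\} \subseteq \beta\bbN\setminus\bbN$; but extending $\Phi$ off $V$ by composing with any fixed character of $\ell_\infty/c_0$ produces a unital $^*$-homomorphism $\ell_\infty/c_0 \to \ell_\infty/c_0$ without affecting the product $h \cdot \Phi(a)$.

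Next, I would invoke the principal lifting theorem for $^*$-homomorphisms of $\ell_\infty/c_0$ established under $\OCA$ and $\MA$ in \cite{vignati2018rigidity} (with \cite{mckenney2018forcing} providing the general framework for quotient structures): every unital $^*$-homomorphism $\Phi\colon \ell_\infty/c_0 \to \ell_\infty/c_0$ admits a strongly continuous $^*$-homomorphism lift $\tilde\Phi\colon \ell_\infty \to \ell_\infty$ that lifts $\Phi$ on a nonmeager dense ideal $\mathcal I \subseteq \mathcal P(\bbN)$ containing $\Fin$. Concretely, such $\tilde\Phi$ is of the form $\tilde\Phi(a) = a \circ \sigma$ for some $\sigma\colon \bbN \to \bbN$, so it is automatically of product form and strongly continuous. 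In parallel, pick any positive contractive lift $\tilde h \in \ell_\infty$ of $h$.

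Setting $\tilde\rho(a) := \tilde h \cdot \tilde\Phi(a)$ then yields the required lift. Positivity and contractivity are immediate; strong continuity holds since $\tilde\rho$ is a product of strongly continuous maps; and the order zero property follows from the multiplicativity of $\tilde\Phi$, since $ab = 0$ forces $\tilde\Phi(a)\tilde\Phi(b) = \tilde\Phi(ab) = 0$ and hence $\tilde\rho(a)\tilde\rho(b) = \tilde h^2 \tilde\Phi(a)\tilde\Phi(b) = 0$. Finally, for any $a \in \ell_\infty$ with $\supp(a) \in \mathcal I$, one has $[\tilde\rho(a)] = [\tilde h]\cdot[\tilde\Phi(a)] = h \cdot \Phi([a]) = \rho([a])$. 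The principal technical obstacle is the bridging step: ensuring that the Winter--Zacharias structure theorem produces a $^*$-homomorphism $\Phi$ of precisely the form to which the pre-existing lifting technology applies (namely $^*$-homomorphisms $\ell_\infty/c_0 \to \ell_\infty/c_0$, not into a proper multiplier algebra). The character extension trick above resolves this, and explains why the author presents the theorem as a corollary needing derivation from, rather than a direct citation of, \cite{vignati2018rigidity} and \cite{mckenney2018forcing}.
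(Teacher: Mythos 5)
Your proposal takes a genuinely different route from the paper's, and it contains a gap precisely at the bridging step you flag as ``the principal technical obstacle''.

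The Winter--Zacharias decomposition does produce a $^*$-homomorphism $\Phi\colon\ell_\infty/c_0\to C_b(V)$ where $V=\{h>0\}\subseteq\beta\NN\setminus\NN$. Since $V$ is a cozero set in the $F$-space $\beta\NN\setminus\NN$, it is $C^*$-embedded, so $C_b(V)=C(\overline V)$ and the values of each $\Phi(a)$ on $\overline V\setminus V$ are already forced. Your proposed extension of $\Phi$ to a unital $^*$-homomorphism into $\ell_\infty/c_0$ --- filling in a fixed character's value off $V$ --- produces discontinuous functions whenever the forced boundary values disagree with that character, and there is no reason for them to agree. The extension \emph{would} work if $\overline V$ were clopen, since then the boundary is empty; but closures of cozero sets in $\beta\NN\setminus\NN$ need not be clopen. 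Concretely, take $h\in\ell_\infty$ defined by $h(k)=2^{-v(k)}$, where $v(k)$ is the largest $n$ with $2^n\mid k$, and set $B_n=\{k:2^n\mid k\}$. Then $V=\bigcup_n(\NN\setminus B_n)^*$, and $\overline V$ being clopen would require a single $E\subseteq\NN$ that is a pseudointersection of the tower $\{B_n\}$ and that $\subseteq^*$-dominates every such pseudointersection; a short diagonalization against the progressions $\{j\cdot 2^n:n\in\NN\}$, $j$ odd, shows no such $E$ exists. So the reduction to $^*$-homomorphism lifting fails as stated.

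The paper's proof sidesteps the decomposition entirely. It applies the ``noncommutative $\OCA$ lifting theorem'' of \cite[Theorem 4.6]{mckenney2018forcing} directly to $\rho$, obtaining a contractive lift $\rho'$ of product form on a nonmeager dense ideal together with quantitative estimates $\varepsilon_n\to 0$ controlling how badly $\rho'$ fails to be linear, orthogonality-preserving, and positive on tails (via \cite[Lemma 3.11]{mckenney2018forcing} or \cite[Proposition 2.18]{vignati2018rigidity}). A coordinatewise truncation $(\,\cdot\,-2\varepsilon_n)_+$ then turns the approximate lift into one that is exactly positive and order zero. No $^*$-homomorphism is ever extracted; the machinery of \cite{mckenney2018forcing} is designed for exactly this broader class of maps, which is why the decomposition is unnecessary. (A secondary caveat with your outline: even for a genuine unital $^*$-homomorphism $\ell_\infty/c_0\to\ell_\infty/c_0$, $\OCA$ and $\MA$ do not give a lift by composition with a single $\sigma\colon\NN\to\NN$ agreeing on all of $\ell_\infty$ --- Dow's $\ZFC$ embedding of $\beta\NN\setminus\NN$ into itself with nowhere dense image, recalled in Example~\ref{example:nontrivial}, rules that out; one gets such a lift only on a dense ideal or after cutting by a clopen set.)
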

\begin{proof}
Here we use the `noncommutative $\OCA$ lifting theorem' as stated and proved in \cite[Theorem 4.6]{mckenney2018forcing}. Thanks to this result, we can find
\begin{itemize}
\item finite intervals $I_n,J_n\subseteq \bbN$ with the property that $\{I_n\}$ is a partition of $\bbN$ into consecutive finite intervals, and $J_n\cap J_m=\emptyset$ whenever $|n-m|\geq 2$,
\item a nonmeager dense ideal $\mathcal I\subseteq\mathcal P(\bbN)$ containing all finite sets, and
\item a contractive function of product form 
\[
\rho'=\sum\rho'_n\colon\mathbb D^\bbN \to\mathbb D^\bbN
\] 
where for every $k$, if $k\in I_n$ then $\rho'_n\colon \mathbb D\to \mathbb D^{J_n}$, and $\rho'$ lifts $\rho$ on elements supported on $\mathcal I$.
\end{itemize}
Moreover (see \cite[Lemma 3.11]{mckenney2018forcing} or \cite[Proposition 2.18]{vignati2018rigidity}), $\rho'$ satisfies the following property: there is a sequence of positive reals $(\varepsilon_n)$ such that $\lim_n\varepsilon_n= 0$ such that for every $n$ and every $a$ and $b$ in $\mathbb D^\bbN$ supported on $\bigcup_{m\geq n} I_m$ we have that 
\begin{enumerate}
\item $\norm{\rho'(a+b)-\rho'(a)-\rho'(b)}<\varepsilon_n$,
\item for every $\lambda\in\mathbb D$, $\norm{\rho'(\lambda a)-\lambda\rho'(a)}<\varepsilon_n$,
\item if $ab=0$ then $\norm{\rho'(a)\rho'(b)}<\varepsilon_n$, and
\item if $a$ is positive then $\rho'(a)\in\ell_\infty$ only takes values in $[-\varepsilon_n,1]$.
\end{enumerate}
For $k\in\mathbb N$, let $n$ be such that $k\in I_n$. For $\lambda\in\mathbb D$, we let $\tilde\rho_k(\lambda \chi_k)=\lambda(\rho'_n(\chi_n)-2\varepsilon_n)_+$. The map $\tilde\rho=\sum\tilde\rho_n$ is positive, order zero, and equals $\rho'$ modulo $c_0$. This shows that $\tilde\rho$ satisfies the thesis of the theorem.
\end{proof}

Before moving to the proof of Theorem~\ref{thm:local} we record a set theoretic fact. Consider the order of eventual domination $\leq^*$ on $\N^\N$, where $f\leq^* f'$ if and only if there is $n$ such that $f(m)\leq f'(m)$ for all $m\geq n$. The cardinal $\mathfrak b$ is defined to be the smallest cardinality of a $\leq^*$-unbounded subset of $\N^\N$, and $\OCA$ implies that $\mathfrak b=\omega_2$ (\cite[\S8]{Todorcevic.PPIT}). 
\begin{lemma}\label{lem:reverseorder}
If $\mathfrak b>\omega_1$, for every family $\mathcal A\subseteq\N^{\N\uparrow}$ of size $\aleph_1$ we can find $\bar f\in \N^{\N\uparrow}$ such that $\bar f\leq^* f$ for all $f\in\mathcal A$.
\end{lemma}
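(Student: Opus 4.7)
The plan is to reduce the problem of finding a slow-growing lower bound to the problem of finding an $\leq^*$-upper bound, by passing to inverse functions. For each $f \in \mathcal{A}$, define its \emph{inverse} $f^* \in \mathbb{N}^\mathbb{N}$ by
\[
f^*(n) = \min\{m \in \mathbb{N} : f(k) \geq n \text{ for all } k \geq m\}.
\]
This is finite precisely because $f \in \mathbb{N}^{\mathbb{N}\uparrow}$. Intuitively, $f^*(n)$ records how long we must wait before $f$ exceeds $n$ permanently; so ``$f$ grows quickly'' corresponds to ``$f^*$ grows slowly.''

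The family $\{f^* : f \in \mathcal{A}\}$ has cardinality at most $\aleph_1 < \mathfrak{b}$, so by definition of $\mathfrak{b}$ it is $\leq^*$-bounded: there exists $g \in \mathbb{N}^\mathbb{N}$ with $f^* \leq^* g$ for every $f \in \mathcal{A}$. By replacing $g$ with $n \mapsto \max(g(n), g(n-1)+1, n+1)$ if needed, we may assume $g$ is strictly increasing. Then define the ``inverse'' of $g$,
\[
\bar f(n) = \max\{m \in \mathbb{N} : g(m) \leq n\},
\]
which is well-defined for $n \geq g(0)$ (set $\bar f \equiv 0$ below that). Since $g$ is strictly increasing, $g(m) \to \infty$, so $\bar f(n) \to \infty$, giving $\bar f \in \mathbb{N}^{\mathbb{N}\uparrow}$.

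To verify $\bar f \leq^* f$ for each $f \in \mathcal{A}$: fix $n_0$ with $f^*(m) \leq g(m)$ for all $m \geq n_0$, and set $N = g(n_0)$. For $n \geq N$, monotonicity gives $\bar f(n) \geq n_0$; writing $m = \bar f(n)$, we have $g(m) \leq n$ and hence $f^*(m) \leq g(m) \leq n$. By the defining property of $f^*(m)$, this forces $f(n) \geq m = \bar f(n)$, as required. There is no real obstacle here — the only thing to notice is the familiar duality that inverting functions reverses the domination order, which turns the $\leq^*$-bounding number $\mathfrak{b}$ into exactly the invariant that controls the existence of slow-growing lower bounds in $\mathbb{N}^{\mathbb{N}\uparrow}$.
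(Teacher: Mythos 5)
Your proof is correct and follows essentially the same approach as the paper: the paper's $\tilde f$ is exactly your $f^*$, and its $\bar f$ (defined via the inverse of a strictly increasing $\leq^*$-bound $\bar g$ on the family $\{\tilde f\}$) is exactly your inverse of $g$. The only difference is that the paper states the verification as a one-line claim while you spell it out; the construction is identical.
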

\begin{proof}
For each $f\in\N^{\N\uparrow}$, let $\tilde f$ be defined by $\tilde f(n)=\min \{k\mid \forall k'\geq k \, ( f(k')\geq n)\}$. If $f\leq^* f'$, then $\tilde f'\leq^*\tilde f$. Since $\mathfrak b>\omega_1$, the family $\tilde{\mathcal A}=\{\tilde f\mid f\in\mathcal A\}$ is $\leq^*$-bounded by some strictly increasing $\bar g$, and we can assume that $\bar g(0)=0$. The function $\bar f$ defined by $\bar f(k)=n$ where $n$ is the maximum natural such that $\bar g(n)\leq k$ is as required.
\end{proof}

We are ready to give the proof of Theorem~\ref{thm:local}. If $g$ and $g'$ are in $\ell_\infty(X)$, we write $g=_{C_0(X)}g'$ for $g-g'\in C_0(X)$. (Similar notation will be used for functions in $\ell_\infty(Y)$).
\begin{proof}[Proof of Theorem~\ref{thm:local}]
Recall, $\OCA$ and $\MA$ are assumed along the way. $\MA$ will only be used to get access to Theorems~\ref{thm:liftben} and \ref{thm:liftcpc}, while $\OCA$ will additionally be used directly towards the end of the proof.

We fix two u.l.f.\ metric spaces $(X,d_X)$ and $(Y,d_Y)$, and a unital $^*$-homomorphism $\Phi\colon C_\nu(X)\to C_\nu(Y)$ whose dual is the continuous function $\tilde\varphi\colon \nu Y\to \nu X$. We also fix a distinguished point $x_0$, and let $\bar X^\mathrm e=(X_n^\mathrm e)$ and $\bar X^\mathrm o=(X_n^\mathrm o)$ be the sparse sequences defined in Definition~\ref{def:sparsesequences}. If $S\subseteq\mathbb N$ and $i\in \{\mathrm e,\mathrm o\}$, we let
\[
X_S^i=\bigcup_{n\in S}X_n^i.
\] 
We need to find $\tilde U\subseteq Y$ such that $\chi_{\tilde U}$ is a slowly oscillating projection (so that this gives a clopen $U\subseteq\nu Y$), $\tilde\varphi[Y\setminus U]$ is nowhere dense, and $\chi_U\Phi\chi_U$ is trivial on the sparse sequences $\bar X^\mathrm e$ and $\bar X^\mathrm o$, meaning that we can find, for $i\in\{\mathrm e,\mathrm o\}$, finite sets $Y_n^{i}\subseteq Y$ and maps $\alpha^{i}_n\colon\mathbb D^{X_n^i}\to\mathbb D^{Y^i_n}$ such that $\Lambda^i=\sum\alpha^i_n$ lifts $\chi_U\Phi\chi_U$ on slowly oscillating functions supported on $\bigcup X^i_n$. 

We first find the sets $Y_n^i$, for $n\in\mathbb N$ and $i\in\{\mathrm e,\mathrm o\}$, and consequently the set $U\subseteq Y$. Secondly, we prove that $U$ has the desired properties and only later we construct the required maps $\alpha_n^i$.

For $i\in\{\mathrm e,\mathrm o\}$, let $g_{n,i}\in\ell_\infty(X)$ be defined by
\[
g_{n,i}(x)=\max \{0,1-\frac{d_X(x,X_n^i)}{n}\}.
\]
Each $g_{n,i}$ has finite support and, for a fixed $i$ and $n\neq m$ we have that $g_{n,i}g_{m,i}=0$. In particular, for every $S\subseteq\mathbb N$, 
\[
g_{S,i}:=\sum_{n\in S}g_{n,i}
\]
is a positive slowly oscillating contraction, hence by Lemma~\ref{lem:orderzero} the map $S\mapsto \pi_X(g_{S,i})$ induces a positive contractive order zero map $\sigma_i\colon\ell_\infty\to\ell_\infty(X)$. Let
\[
\rho_i=\Phi\circ\sigma_i,
\]
so that
\[
\rho_i\colon \ell_\infty/c_0\to C_\nu(Y)\subseteq\ell_\infty(Y)/c_0(Y)
\]
is a positive contractive order zero map. Since we assumed $\OCA$ and $\MA$, we can apply Theorem~\ref{thm:liftcpc} to the map $\rho_i$, and obtain a nonmeager dense ideal $\mathcal I_i$ which contains all finite sets and a strongly continuous positive contractive order zero map
\[
\tilde\rho_i\colon\ell_\infty\to\ell_\infty(Y)
\] 
lifting $\rho_i$ on elements supported on $\mathcal I_i$. For $S\subseteq\bbN$, we let $p_S$ be the canonical projection on $S$ in $\ell_\infty$. We write $p_n$ for $p_{\{n\}}$. Let, for brevity, 
\[
q_{S,i}:=\tilde\rho_i(p_S),
\] 
so that, if $S\in\mathcal I_i$, then $\pi_Y(q_{S,i})=\Phi(\pi_X(g_{S,i}))$. Since $\tilde\rho_i$ is strongly continuous, for $i\in\{\mathrm e,\mathrm o\}$ and $S\subseteq\mathbb N$ we have $q_{S,i}=\sum_{n\in S}q_{n,i}$.

Since $\mathcal I_i$ is dense and for every $S\in \mathcal I_i$ we have that $q_{S,i}$ belongs to $C_\nu(Y)$ (being a lift for $\Phi(\pi_X(g_{S,i}))$), Lemma~\ref{lem:orderzero} (applied to $g=q_{\mathbb N,i}$ and $X_n=\supp(g_{n,i})$) gives that $q_{S,i}$ is slowly oscillating for each $S\subseteq\mathbb N$.
%\begin{proof}
%Suppose not, and fix $i$, $S\subseteq\mathbb N$, $\varepsilon>0$ and $R>0$ such that there are two infinite sequences $(y_n),(y'_n)\subseteq Y$ such that $d_Y(y_n,y_n')\leq R$ but $|q_{S,i}(y_n)-q_{S,i}(y_n')|\geq\varepsilon$. Since $q_{S,i}=\sum_{n\in S}q_{n,i}$ and the supports of the functions $q_{n,i}$, for $n\in\mathbb N$, are disjoint, we can assume there are two sequences $(k_n)$ and $(j_n)$ such that $q_{S,i}(y_n)=q_{k_n,i}(y_n)$ and $q_{S,i}(y'_n)=q_{j_n,i}(y'_n)$. By going (twice if needed) to a subsequence we can assume that $T=\{k_n\mid n\in\mathbb N\}\cup\{j_n\mid n\in\mathbb N\}$ belongs to the dense ideal $\mathcal I_i$. Since $T\in\mathcal I_i$, we have that $\pi_Y(q_{T,i})=\Phi(\pi_X(g_{T,i}))\in C_\nu(Y)$, hence $q_{T,i}$ is slowly oscillating. On the other hand, $q_{T,i}(y_n)=q_{S,i}(y_n)$ and $q_{T,i}(y'_n)=q_{S,i}(y'_n)$. This is a contradiction.
%\end{proof}
Let
\[
Y_n^i=\{y\in Y\mid q_{n,i}(y)>1/2\}, \, \, \text{ and } \tilde U=\bigcup_n Y_n^\mathrm e\cup Y_n^\mathrm o.
\]
\begin{claim}\label{claim:cutting}
For every $i\in\{\mathrm e,\mathrm o\}$ and $S\subseteq\mathbb N$ we have that $\chi_{\tilde U}q_{S,i}=_{C_0(Y)}q_{S,i}$.
\end{claim}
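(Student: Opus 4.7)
Plan. The claim reduces to the case $S=\mathbb N$. Since $\tilde\rho_i$ is order zero and the $p_n$ are pairwise orthogonal, the positive contractions $q_{n,i}=\tilde\rho_i(p_n)$ have pairwise disjoint supports, so strong continuity yields $q_{S,i}(y)=q_{n,i}(y)$ for the unique $n\in S$ (if any) with $y\in\supp(q_{n,i})$, and $q_{S,i}\leq q_{\mathbb N,i}=:v_i$ pointwise. Moreover each $q_{n,i}\in c_0(Y)$: since $\{n\}$ is finite hence in $\mathcal I_i$, and $\rho_i(p_{\{n\}})=\Phi(\pi_X(g_{n,i}))=0$ (because $g_{n,i}$ has finite support), $q_{n,i}$ is a lift of $0$. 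A key elementary consequence is that on $Y\setminus\tilde U$ one has $v_i(y)\leq 1/2$, since $y\notin\tilde U$ forces $q_{n,j}(y)\leq 1/2$ for every $n,j$, and disjoint supports give $v_i(y)=q_{n(y),i}(y)$ for at most one $n(y)$. So it suffices to show $\chi_{Y\setminus\tilde U}\,v_i\in c_0(Y)$.

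I would then argue by contradiction: suppose there exist $\varepsilon\in(0,1/2)$ and distinct $y_k\in Y\setminus\tilde U$ with $v_i(y_k)>\varepsilon$. For each $k$ let $n_k$ be the unique index with $y_k\in\supp(q_{n_k,i})$, so $q_{n_k,i}(y_k)>\varepsilon$. Since $\{q_{n,i}>\varepsilon\}$ is finite for every fixed $n$, after thinning $(n_k)$ may be taken strictly increasing; density of the ideal $\mathcal I_i$ then allows a further thinning so that $T:=\{n_k\}\in\mathcal I_i$, giving $\pi_Y(q_{T,i})=\rho_i(p_T)=\Phi(\pi_X(g_{T,i}))$ and $q_{T,i}(y_k)=v_i(y_k)>\varepsilon$.

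The final step derives the contradiction from the geometric design of the two sparse sequences. The annuli $\bar X^\mathrm e$ and $\bar X^\mathrm o$ are arranged so that every $x\in X$ sufficiently far from $x_0$ lies in the core of some $X_n^j$, whence $g_{\mathbb N,\mathrm e}+g_{\mathbb N,\mathrm o}\geq 1$ outside a finite set in $X$. Applying the unital $^*$-homomorphism $\Phi$ preserves this inequality, yielding $\pi_Y(v_\mathrm e+v_\mathrm o)\geq 1$ in $C_\nu(Y)$. Combined with $v_\mathrm e(y_k),v_\mathrm o(y_k)\leq 1/2$ (since $y_k\notin\tilde U$), this forces $v_\mathrm e(y_k)+v_\mathrm o(y_k)\to 1$ and thus both $v_\mathrm e(y_k),v_\mathrm o(y_k)\to 1/2$. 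Repeating the density argument for $\mathcal I_{-i}$ produces an infinite set $T'\in\mathcal I_{-i}$ with $q_{T',-i}(y_k)\to 1/2$ as well, and the lifts $q_{T,i}$ and $q_{T',-i}$ witness positive elements of $C_\nu(Y)$ lying below, respectively, $\Phi(\pi_X(g_{T,i}))$ and $\Phi(\pi_X(g_{T',-i}))$. Applying functional calculus (for instance $t\mapsto \min(2t,1)$) simultaneously on both sides of $\Phi$ allows one to amplify the values at the $y_k$ to be close to $1$, while the order-zero structure of $\tilde\rho_\mathrm e,\tilde\rho_\mathrm o$ gives that the two resulting lifts have disjoint essential supports on $(y_k)$ asymptotically, contradicting the pointwise identity $v_\mathrm e(y_k)+v_\mathrm o(y_k)\to 1$ in which both summands lie in $(\varepsilon,1/2]$.

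The main obstacle I expect is the final incompatibility argument: translating the quantitative information "$v_\mathrm e,v_\mathrm o$ both converge to $1/2$ along $(y_k)$" into a genuine contradiction with the fact that these functions are distinct lifts coming from separate applications of Theorem~\ref{thm:liftcpc} to $\rho_\mathrm e$ and $\rho_\mathrm o$. The argument must carefully exploit that the $\mathcal I_i$-lift is positive and order zero, that $\bar X^\mathrm e$ and $\bar X^\mathrm o$ interleave in just the right way to make the "covering" inequality sharp only on cores and not on transition regions, and that the order-zero structure forbids the simultaneous balanced scenario at infinitely many points once one passes to the algebraic constraints imposed by $\Phi$.
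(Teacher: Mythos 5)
Your setup (reducing to $S=\mathbb N$, noting $q_{\mathbb N,i}\restriction(Y\setminus\tilde U)\leq 1/2$ by orthogonality of the $q_{n,i}$, and thinning the bad sequence to an index set $T$ in the ideal $\mathcal I_i$) matches the paper's structure, but your final incompatibility argument has two genuine gaps that the paper's argument is engineered to avoid.

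The first gap is that the global covering inequality does not push through the lifts. You have $g_{\mathbb N,\mathrm e}+g_{\mathbb N,\mathrm o}\geq 1$ off a finite set, and $\Phi$ preserves this at the level of $C_\nu(X)$, giving $\Phi(\pi_X(g_{\mathbb N,\mathrm e}))+\Phi(\pi_X(g_{\mathbb N,\mathrm o}))\geq 1$ in $C_\nu(Y)$. But this is \emph{not} the inequality $\pi_Y(q_{\mathbb N,\mathrm e})+\pi_Y(q_{\mathbb N,\mathrm o})\geq 1$ that your argument needs. Theorem~\ref{thm:liftcpc} only guarantees $\pi_Y(q_{S,i})=\rho_i(\pi(p_S))=\Phi(\pi_X(g_{S,i}))$ when $S\in\mathcal I_i$; since $\mathcal I_i$ is a proper ideal, $\mathbb N\notin\mathcal I_i$, so $q_{\mathbb N,i}$ need not lift $\Phi(\pi_X(g_{\mathbb N,i}))$ at all. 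Your quantity $v_\mathrm e+v_\mathrm o$ therefore has no a priori lower bound coming from $\Phi$. The second gap, which you yourself flag as ``the main obstacle,'' is that even granting $v_\mathrm e(y_k)+v_\mathrm o(y_k)\to 1$ with both summands in $(\varepsilon,1/2]$, the claimed contradiction via ``disjoint essential supports asymptotically'' is not established; there is no reason two separate applications of Theorem~\ref{thm:liftcpc} to $\rho_\mathrm e$ and $\rho_\mathrm o$ should yield order-zero lifts with disjoint supports, and the proposed functional-calculus amplification is left unspecified.

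The paper's proof sidesteps both issues by working locally inside $\mathcal I_\mathrm e\cap\mathcal I_\mathrm o$. After thinning, it takes $S=\{k_n\}$ and a shifted copy $T=\{k_n\pm 1\}$, both in $\mathcal I_\mathrm e\cap\mathcal I_\mathrm o$, and introduces a continuous $j\colon[0,1]\to[0,1/2]$ that is the identity on $[0,1/2]$ and vanishes on $[2/3,1]$. The key geometric identity is $g_{S\cup T,\mathrm o}\,j(g_{S,\mathrm e})=j(g_{S,\mathrm e})$: the function $j$ kills $g_{S,\mathrm e}$ on the cores of the even annuli and retains it only on the transition shells, and the interleaving of $\bar X^\mathrm e$ and $\bar X^\mathrm o$ is arranged precisely so these shells sit inside odd annuli indexed by $S\cup T$. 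Since $S$, $T$, and $S\cup T$ all lie in $\mathcal I_\mathrm e\cap\mathcal I_\mathrm o$, the identity passes through $\Phi$ to give $q_{S\cup T,\mathrm o}\,j(q_{S,\mathrm e})=_{C_0(Y)}j(q_{S,\mathrm e})$. At the points $y_n\notin\tilde U$ one has $q_{k_n,\mathrm e}(y_n)\in(\varepsilon,1/2]$, so $j$ acts as the identity there and $j(q_{S,\mathrm e})(y_n)>\varepsilon$, forcing $q_{S\cup T,\mathrm o}(y_n)\to 1$; by orthogonality of the $q_{\ell,\mathrm o}$ this puts some $q_{\ell,\mathrm o}(y_n)\geq 2/3>1/2$, i.e.\ $y_n\in\tilde U$, a clean contradiction. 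If you want to repair your argument, you should replace the global covering step with this local one and use functional calculus as in the paper to isolate the transition regions, rather than attempting a global estimate on $q_{\mathbb N,\mathrm e}+q_{\mathbb N,\mathrm o}$.
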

\begin{proof}
We will prove that $\chi_{\tilde U}q_{\mathbb N,\mathrm e}=_{C_0(Y)}q_{\mathbb N,\mathrm e}$. 
Since $q_{S,\mathrm e}\leq q_{\mathbb N,\mathrm e}$ for all $S\subseteq\bbN$, this suffices. (The proof for $i=\mathrm o$ is exactly the same.)

Suppose thus that $\chi_{\tilde U}q_{\mathbb N,\mathrm e}-q_{\mathbb N,\mathrm e}\notin C_0(Y)$, so that there is $\varepsilon>0$ and a sequence $(x_n)\subseteq Y\setminus\tilde U$ such that $q_{\mathbb N,\mathrm e}(x_n)>\varepsilon$ for all $n$. Since $q_{\mathbb N,\mathrm e}=\sum_n q_{n,\mathrm e}$, all the $q_{n,\mathrm e}$, for $n\in\mathbb N$, are pairwise orthogonal, and the ideal $\mathcal I_\mathrm e$ is dense, by going to a subsequence we can find an infinite $\{k_n\mid n\in\mathbb N\}\in\mathcal I_\mathrm e$ such that $q_{k_n,\mathrm e}(y_n)>\varepsilon$ for all $n$. Let $S=\{k_n\mid n\in\mathbb N\}$. Again by going to a subsequence, we can assume that both $S$ and $T=\{k_n+1\mid n\in\mathbb N\}$ belong to $\mathcal I_\mathrm o\cap\mathcal I_\mathrm e$.

Let $j\colon [0,1]\to[0,1/2]$ be a continuous function such that $j$ is the identity on $[0,1/2]$ and equals $0$ on $[2/3,1]$. Since $g_{S\cup T,\mathrm o}j(g_{S,\mathrm e})=j(g_{S,\mathrm e})$ and $\tilde\rho_\mathrm e$ and $\tilde\rho_\mathrm o$ are liftings for $\rho_\mathrm e$ and $\rho_\mathrm o$ on elements supported on $\mathcal I_\mathrm e\cap\mathcal I_\mathrm o$, we have that
\[
q_{S\cup T,\mathrm o}j(q_{S,\mathrm e})=_{C_0(Y)}j(q_{S,\mathrm e}).
\]
As $p_{S\cup T}=\sum p_{k_n}+p_{k_n+1}$ and $\tilde\rho_\mathrm o$ is order zero and strongly continuous, we have that $q_{S\cup T,\mathrm o}=\sum_nq_{k_n,\mathrm o}+q_{k_n+1,\mathrm o}$. Since 
\[
\liminf_n q_{k_n,\mathrm e}(y_n)=\liminf_n j(q_{k_n,\mathrm e})(y_n)\geq\varepsilon,
\]
we have that
 \[
 \lim_n (q_{k_n,\mathrm o}+q_{k_n+1,\mathrm o})(y_n)=1.
 \]
As $q_{k_n,\mathrm o}q_{k_n+1,\mathrm o}=0$ we have that, eventually, $\max \{q_{k_n,\mathrm o}(y_n),q_{k_n+1,\mathrm o}(y_n)\}\geq 2/3$, which implies that $y_n\in Y_{k_n}^\mathrm o\cup Y_{k_n+1}^\mathrm o\subseteq\tilde U$, a contradiction.
\end{proof}

Note that $q_{\mathbb N,\mathrm e}+q_{\mathbb N,\mathrm o}$, modulo a $C_0(Y)$ function, is a slowly oscillating function which evaluates $\geq1/2$ on $\tilde U$ and vanishes outside of it. This shows that $\chi_{\tilde U}$ is slowly oscillating, and therefore 
\[
U\subseteq\nu Y\text{ is clopen}.
\]
By Claim~\ref{claim:cutting}, substituting $q_{\mathbb N,i}$ with $\chi_{\tilde U}q_{\mathbb N,i}$ we still have that $S\mapsto q_{S,i}$ induces a strongly continuous, positive, and order zero map lifting $\Phi\restriction \{\pi_X(g_{S,i})\mid S\subseteq\mathbb N\}$ on the nonmeager dense ideal $\mathcal I_i$.

\begin{claim}
$\tilde\varphi[\nu Y\setminus U]$ is nowhere dense.
\end{claim}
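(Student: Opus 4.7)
The plan is to show that the closed set $F:=\tilde\varphi[\nu Y\setminus U]\subseteq\nu X$ has empty interior, equivalently, that its vanishing ideal $I_F=\{f\in C_\nu(X):f|_F=0\}$ is essential in $C_\nu(X)$. Since $\tilde\varphi$ is the dual of $\Phi$, one has $f\in I_F$ if and only if $(1-\chi_U)\Phi(f)=0$. So the task reduces to: for every nonzero positive $g\in C_\nu(X)$, produce $h\in C_\nu(X)$ with $\chi_U\Phi(h)=\Phi(h)$ and $hg\neq 0$.

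Given such $g$, lift it to a positive contraction $g'\in C_h(X)$ and apply Lemma~\ref{lem:decomposition} to write $g'=g'_\mathrm{e}+g'_\mathrm{o}$ modulo $C_0(X)$, with each $g'_i\geq 0$ and $\supp(g'_i)\subseteq\bigcup_n X_n^i$. Since $\pi_X(g')\neq 0$, at least one of the two summands is not in $C_0(X)$; assume without loss of generality that $g'_\mathrm{e}\notin C_0(X)$. Then there are $c>0$, an infinite $S\subseteq\bbN$ and, for each $n\in S$, a nonempty $A_n\subseteq X_n^\mathrm{e}$ with $g'_\mathrm{e}\geq c$ on $A_n$. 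Density of the ideal $\mathcal I_\mathrm{e}$ supplies an infinite $S'\subseteq S$ with $S'\in\mathcal I_\mathrm{e}$, and Lemma~\ref{lem:orderzero}\eqref{o0c1} applied to the slowly oscillating function $g'_\mathrm{e}=\sum_n g'_\mathrm{e}\chi_{X_n^\mathrm{e}}$ with $T=S'$ yields an infinite $S''\subseteq S'$ (still in $\mathcal I_\mathrm{e}$, as it is an ideal) such that $h':=\sum_{n\in S''}g'_\mathrm{e}\chi_{X_n^\mathrm{e}}$ lies in $C_h(X)$.

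Two estimates close the argument. First, $g'_\mathrm{e}\chi_{X_n^\mathrm{e}}\leq\chi_{X_n^\mathrm{e}}\leq g_{n,\mathrm{e}}$ pointwise, so $0\leq h'\leq g_{S'',\mathrm{e}}$. Applying the positive $^*$-homomorphism $\Phi$ and invoking Claim~\ref{claim:cutting} together with $S''\in\mathcal I_\mathrm{e}$ gives $0\leq\Phi(\pi_X(h'))\leq\Phi(\pi_X(g_{S'',\mathrm{e}}))=\pi_Y(q_{S'',\mathrm{e}})=\chi_U\pi_Y(q_{S'',\mathrm{e}})$; in the commutative algebra $C_\nu(Y)$, being dominated by an element supported on $U$ forces $\Phi(\pi_X(h'))=\chi_U\Phi(\pi_X(h'))$, so $\pi_X(h')\in I_F$. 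Second, on $A_n$ for $n\in S''$ one has $h'=g'_\mathrm{e}\geq c$ and $g'\geq g'_\mathrm{e}\geq c$ modulo $C_0(X)$, hence $h'\cdot g'\geq c^2$ on the infinite set $\bigsqcup_{n\in S''}A_n$; this gives $\pi_X(h')\cdot g\neq 0$, the required witness of essentiality.

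The main obstacle is the simultaneous control of $S''$: it must lie in $\mathcal I_\mathrm{e}$ (so that $\pi_Y(q_{S'',\mathrm{e}})$ equals $\Phi(\pi_X(g_{S'',\mathrm{e}}))$ and is supported on $U$ via Claim~\ref{claim:cutting}), the truncated sum $\sum_{n\in S''}g'_\mathrm{e}\chi_{X_n^\mathrm{e}}$ must remain slowly oscillating, and $S''$ must still meet enough of the original $S$ to preserve the lower bound $g'_\mathrm{e}\geq c$ at infinitely many indices. Density of $\mathcal I_\mathrm{e}$ combined with the dense-ideal characterisation of slow oscillation in Lemma~\ref{lem:orderzero}\eqref{o0c1} is precisely what reconciles these three requirements on a common infinite $S''$.
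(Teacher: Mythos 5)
Your proof is correct. You reformulate the statement as essentiality of the vanishing ideal $I_F=\{f\in C_\nu(X):(1-\chi_U)\Phi(f)=0\}$, then, given a nonzero positive $g$, use the decomposition of Lemma~\ref{lem:decomposition} and density of the ideal $\mathcal I_\mathrm{e}$ (or $\mathcal I_\mathrm{o}$) to truncate $g'_\mathrm{e}$ down to a slowly oscillating $h'$ supported on $\bigcup_{n\in S''}X_n^\mathrm e$ with $S''\in\mathcal I_\mathrm{e}$, and the key steps $0\le h'\le g_{S'',\mathrm e}$, $\Phi(\pi_X(g_{S'',\mathrm e}))=\pi_Y(q_{S'',\mathrm e})=\chi_U\pi_Y(q_{S'',\mathrm e})$ (via Claim~\ref{claim:cutting} and $S''\in\mathcal I_\mathrm e$), plus positivity of $\Phi$, force $\Phi(\pi_X(h'))$ to be supported on $U$. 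All of these check out, including the lower bound $h'g'\ge c^2$ on the infinite set $\bigsqcup_{n\in S''}A_n$ that gives $\pi_X(h')g\neq 0$.

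Your route differs in a worthwhile way from the paper's. The paper argues by contradiction: assuming a nonempty open $V\subseteq \tilde\varphi[\nu Y\setminus U]$, it picks a norm-one $f$ supported on $V$, arranges $\tilde f=\tilde f g_{S,i}$ for some $S\in\mathcal I_i$, and then estimates $1=\|\Phi(f)\pi_Y(q_{S,i})\|\le 1/2$ from the pointwise bound $q_{S,i}\le 1/2$ off $\tilde U$. Your argument is the dual formulation (essentiality of $I_F$), but the mechanism is different: instead of bounding $\|\Phi(f)\pi_Y(q_{S,i})\|$ using the value bound for $q_{S,i}$ off $\tilde U$, you dominate $\Phi(\pi_X(h'))$ by $\pi_Y(q_{S'',\mathrm e})$ and invoke Claim~\ref{claim:cutting} to conclude exact support containment, then read off $\pi_X(h')\in I_F$ directly. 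This order-theoretic domination trick is clean and, notably, sidesteps having to control the piece of $\Phi(f)$ that might live over $U$ (a point the paper handles implicitly via the $\le 1/2$ estimate). One minor redundancy: since $g'_\mathrm e$ is already slowly oscillating, the restriction $(g'_\mathrm e)_{S'}$ to any $S'$ is automatically slowly oscillating, so you could take $S''=S'$ directly rather than invoking Lemma~\ref{lem:orderzero}\eqref{o0c1} again; this does not affect correctness.
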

\begin{proof}
Let $W=\tilde\varphi[\nu Y\setminus U]$. Since $\tilde\varphi$ is continuous and $\nu Y\setminus U$ is clopen, $\tilde\varphi[\nu Y\setminus U]$ is closed. We argue by contradiction and let $V\subseteq W$ be nonempty and open in $\nu X$. Since $V$ is open and included in the image of $\tilde\varphi$, if $f$ is supported on $V$ then $\norm{f}=\norm{\Phi(f)}$. Moreover by the definition of $U$, for all $S\subseteq\mathbb N$ and $i\in\{\mathrm e,\mathrm o\}$, if $f$ is a contraction supported on $V$ then $\norm{\Phi(f)\pi_Y(q_{S,i})}\leq 1/2$.

 Let $f\in C_\nu(X)$ be a norm one positive contraction supported on $V$, so that $\norm{\Phi(f)}=1$. Let $\tilde f$ be a positive contraction with $\pi_X(\tilde f)=f$, and pick an infinite sequence $(x_n)\subseteq X$ such that $\tilde f(x_n)\to 1$ as $n\to\infty$. By passing to a subsequence if necessary, we can assume that there is a sequence of naturals $(k_n)$ and $i\in\{\mathrm e,\mathrm o\}$ such that $x_n\in\mathrm{int}(X_{k_n}^i,n)$ and $S=\{k_n\mid n\in\mathbb N\}\in\mathcal I_i$. Multiplying $\tilde f$ by a positive contraction supported on $\bigcup X_{k_n}^i$ we can assume that $\tilde f=\sum \tilde f_n$ where $\tilde f_n$ is positive and supported on $X_{k_n}^i$. Note that $\tilde f=\tilde fg_{S,i}$. Since $S\in\mathcal I_i$ we have that $\pi_Y(q_{S,i})=\Phi(\pi_X(g_{S,i}))$, hence
 \[
1=\norm{\Phi(f)}=\norm{\Phi(f)\Phi(\pi_X(g_{S,i}))}=\norm{\Phi(f)\pi_Y(q_{S,i})}\leq 1/2.
\]
This contradiction concludes the proof.
\end{proof}
We are left to show that the $^*$-homomorphism 
\[
\Phi^U:=\chi_U\Phi\chi_U\colon C_\nu(X)\to C_\nu(\tilde U)
\]
 is of product form on the sparse sequences $\bar X^\mathrm e$ and $\bar X^\mathrm o$. We fix a set theoretic lifting $\tilde \Phi^U\colon C_h(X)\to \chi_{\tilde U}C_h(Y)\chi_{\tilde U}$ for $\Phi^U$, and also fix $i\in\{\mathrm e,\mathrm o\}$.
%The first step is to get rid of the ideals $\mathcal I_i$.
%\begin{claim}
%For $i\in\{\mathrm e,\mathrm o\}$ and $S\subseteq\mathbb N$, $q_{S,i}$ lifts $\Phi^U(\pi_X(g_{S,i}))$.
%\end{claim}
%\begin{proof}
%Fix $i$. Let $Z_n=\supp(g_{n,i})$, and, for $S\subseteq \mathbb N$, $Z_S=\bigcup_n Z_n$. Note that $Z_\mathbb Sq_{\mathbb N,i}=q_{S,i}$ for all $S\subseteq\mathbb N$. Furthermore, Consider the ideal
%
%\end{proof}

If $g\in \ell_\infty(X)$ is supported on $\bigcup_nX_n^i$ and $S\subseteq\mathbb N$, we write $g_S$ for $g\chi_{X_S}=g\restriction X_S^i$. By Lemma~\ref{lem:orderzero}, the function $g$ is slowly oscillating if and only if each $g_S$ is. 
 
\begin{claim}\label{claim:rightsupport}
For every $i\in\{\mathrm e,\mathrm o\}$, $g\in C_h(X)$ supported on $\bigcup X_n^i$, and $S\subseteq \mathbb N$ we have that
\[
\chi_{Y_S^i}\tilde \Phi^U(g)=_{C_0(Y)}\tilde\Phi^U(g_S).
\]
\end{claim}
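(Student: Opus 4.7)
The plan is to split the claim into two halves: $(1-\chi_{Y_S^i})\tilde\Phi^U(g_S)\in C_0(Y)$ and $\chi_{Y_S^i}\tilde\Phi^U(g_{S^c})\in C_0(Y)$. Together with $g=g_S+g_{S^c}$ modulo $c_0(X)$ and approximate additivity of the lift $\tilde\Phi^U$, these imply the claim. The central ingredient for both halves is the following orthogonality consequence of sparseness: for disjoint $A,T\subseteq\mathbb{N}$ with $T\in\mathcal{I}_i$, one has $\tilde\Phi^U(g_A)\cdot q_{T,i}\in C_0(Y)$. Indeed, the sparseness inequality $d(X_m^i,X_n^i)>2\max(m,n)$ together with $\supp(g_{n,i})\subseteq\{x:d(x,X_n^i)<n\}$ forces $g_A\cdot g_{T,i}\in c_0(X)$; approximate multiplicativity of $\tilde\Phi^U$ transfers this to $\tilde\Phi^U(g_A)\cdot\tilde\Phi^U(g_{T,i})\in C_0(Y)$, and $\tilde\Phi^U(g_{T,i})=_{C_0(Y)}q_{T,i}$ since $T\in\mathcal{I}_i$.

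The second half is straightforward. If it fails, I extract distinct $y_k\in Y_{n_k}^i$ with $n_k\in S$ and $|\tilde\Phi^U(g_{S^c})(y_k)|>\varepsilon$, and by density of $\mathcal{I}_i$ pass to a subsequence so that $T:=\{n_k\}\in\mathcal{I}_i$. Since $T\subseteq S$ is disjoint from $S^c$, the orthogonality tool gives $\tilde\Phi^U(g_{S^c})\cdot q_{T,i}\in C_0(Y)$, contradicting $q_{T,i}(y_k)\geq q_{n_k,i}(y_k)>1/2$.

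The first half is harder. A violating sequence $y_k\notin Y_S^i$ with $|\tilde\Phi^U(g_S)(y_k)|>\varepsilon$ sits, modulo $C_0(Y)$, inside $\tilde U=\bigcup_n(Y_n^{\mathrm e}\cup Y_n^{\mathrm o})$, so after extracting a subsequence I may assume $y_k\in Y_{n_k}^{j}$ for a fixed $j\in\{\mathrm e,\mathrm o\}$. When $j=i$, necessarily $n_k\in S^c$ and the argument just used applies with $T\subseteq S^c$ in $\mathcal{I}_i$. The main technical obstacle is the case $j\neq i$: the overlap computation from Definition~\ref{def:sparsesequences} yields $X_n^i\cap X_m^{j}\neq\emptyset$ precisely for $m\in\{n-1,n\}$, and $g_S\cdot g_{T,j}$ need not lie in $c_0(X)$ once $n_k$ interacts with $S\cup(S-1)$. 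My plan is to split further: if infinitely many $n_k$ avoid $S\cup(S-1)$, pass to this subsequence and, by density of $\mathcal{I}_{j}$, assume $T:=\{n_k\}\in\mathcal{I}_{j}$; then $g_S\cdot g_{T,j}\in c_0(X)$ and the $j$-analogue of the orthogonality tool yields a contradiction. In the residual subcase $n_k\in S\cup(S-1)$ cofinally, I combine $\tilde\Phi^U(g_S)=_{C_0(Y)}\tilde\Phi^U(g_S)\cdot\tilde\Phi^U(g_{S,i})$ (from $g_S\cdot g_{S,i}=_{c_0(X)}g_S$, which uses $g_{n,i}\equiv 1$ on $X_n^i$) with the bound $q_{S,i}(y_k)\leq 1/2$ (a consequence of $y_k\notin Y_S^i$ and pairwise orthogonality of the $(q_{n,i})$), and approximate $\tilde\Phi^U(g_{S,i})$ through decompositions of $S$ into pieces of $\mathcal{I}_i$ (available by density) to bound $\tilde\Phi^U(g_{S,i})(y_k)$ away from $1$ and force $\tilde\Phi^U(g_S)(y_k)\to 0$. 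The delicate point will be this comparison in the residual subcase, where $S$ itself may fail to lie in $\mathcal{I}_i$.
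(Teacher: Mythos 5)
Your split into two halves, your orthogonality tool ($g_A g_{T,i}=0$ with $T\in\mathcal I_i$ forces $\tilde\Phi^U(g_A)q_{T,i}\in C_0(Y)$), the easy half, and the first two cases of the hard half are all sound and track the paper's argument closely. But there is a genuine gap in the residual subcase ($j\neq i$, $n_k\in S\cup(S-1)$ cofinally), and it is in precisely the spot you flag. The plan to bound $\tilde\Phi^U(g_{S,i})(y_k)$ away from $1$ by comparing with $q_{S,i}(y_k)\leq 1/2$ has no footing: $q_{S,i}$ lifts $\Phi(\pi_X(g_{S,i}))$ \emph{only} when $S\in\mathcal I_i$, and when $S\notin\mathcal I_i$ there is no a priori relation between $q_{S,i}$ and $\tilde\Phi^U(g_{S,i})$. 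Decomposing $S$ into $\mathcal I_i$-pieces does not repair this, since those approximations control $\tilde\Phi^U(g_{S',i})$ for $S'\subsetneq S$ but give no handle on $\tilde\Phi^U(g_{S,i})$ itself (there is no convergence of lifts along an exhaustion of $S$).

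The paper's fix for the residual case is to never touch $q_{S,i}$ for the large set $S$: after passing to a subsequence so that $T=\{n_k\}$ and $T^+=T+1$ both lie in $\mathcal I_{\mathrm e}\cap\mathcal I_{\mathrm o}$, it uses the $j$-parity orthogonality $g_{T,j}\,g_{S\setminus(T\cup T^+)}=0$ to replace $g_S$ by $g_{(T\cup T^+)\cap S}$ on the sequence $(y_k)$. Now $(T\cup T^+)\cap S\in\mathcal I_i$, so $q_{T\cup T^+,i}$ genuinely lifts, $g_{T\cup T^+,i}\,g_{(T\cup T^+)\cap S}=g_{(T\cup T^+)\cap S}$ gives $q_{T\cup T^+,i}(y_k)\to 1$, hence eventually some $q_{\ell_n,i}(y_k)\geq 2/3$, i.e.\ $y_k\in Y_{\mathbb N}^i$. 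This contradicts $y_k\notin Y_{\mathbb N}^i$, a fact the paper establishes \emph{first} by applying the easy half with $\mathbb N\setminus S$ in place of $S$ (you get this automatically by having disposed of the $j=i$ case, but it is worth making explicit). So the missing idea is the localization to $T\cup T^+$ before invoking any $q_{\cdot,i}$, and this is the step your residual-case plan needs to absorb.
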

\begin{proof}
Fix $g$, $i$, and $S$. We will show that 
\[
\chi_{Y_S^i}\tilde \Phi^U(g_{\bbN\setminus S})=_{C_0(Y)}0=_{C_0(Y)} \chi_{Y_S^i} \tilde \Phi^U(g_{S})-\tilde \Phi^U(g_{S}),
\]
 so that, since $\tilde\Phi^U(g)=_{C_0(Y)}\tilde\Phi^U(g_S)+\tilde\Phi^U(g_{\mathbb N\setminus S})$, we have that
\[
\chi_{Y_S^i}\tilde \Phi^U(g)-\tilde\Phi^U(g_S)=_{C_0(Y)}\chi_{Y_S^i}(\tilde\Phi^U(g_S)+\tilde\Phi^U(g_{\mathbb N\setminus S}))-\tilde\Phi^U(g_S)=_{C_0(Y)}0.
\]
Suppose that $\chi_{Y_S^i}\tilde \Phi^U(g_{\bbN\setminus S})\notin C_0(Y)$. Let $(y_n)\subseteq Y_S^i$ be an infinite sequence such that $|\tilde\Phi^U(g_{\mathbb N\setminus S})(y_n)|>\varepsilon$ for some $\varepsilon>0$ and all $n\in\mathbb N$. By passing to a subsequence we can suppose there is an infinite $\{k_n\mid n\in\mathbb N\}\subseteq S$ belonging to $\mathcal I_i$ such that $y_n\in Y_{k_n}^i$. Let $T=\{k_n\mid n\in\mathbb N\}$. By the definition of $Y_{k_n}^i$, we have that $q_{T,i}(y_n)\geq 1/2$. Since $y_n$ is an infinite sequence and $T\in\mathcal I_i$, altogether we get
\[
0=\norm{\Phi^U(\pi_Y(q_{T_i}g_{\mathbb N\setminus S}))}\geq\liminf |q_{T,i}\tilde \Phi^U(g_{\mathbb N\setminus S})(y_n)|\geq\varepsilon/2,
\]
a contradiction. 

We now show that $\chi_{Y_S^i} \tilde \Phi^U(g_{S})-\tilde \Phi^U(g_{S})\in C_0(Y)$. Assume $i=\mathrm e$. Suppose by contradiction that there is $\varepsilon>0$ and a sequence $(y_n)\subseteq U\setminus Y_S^\mathrm e$ such that $|\tilde \Phi^U(g_{S})(y_n)|>\varepsilon$. Since $\chi_{Y_{\mathbb N\setminus S}^\mathrm e}\tilde \Phi^U(g_{S})\in C_0(Y)$ we have that $(y_n)\subseteq U\setminus Y_\mathbb N^\mathrm e\subseteq Y_\mathbb N^\mathrm o$. Let $(k_n)$ be an increasing sequence of naturals such that $y_n\in Y_{k_n}^\mathrm o$. Let 
\[
T=\{k_n\mid n\in\mathbb N\}\text{ and }T^+=\{k_n+1\mid n\in\mathbb N\}.
\]
 By going to a subsequence, we can assume that both $T$ and $T^+$ belong to $\mathcal I_\mathrm o\cap\mathcal I_\mathrm e$. Now, notice that $g_{T,\mathrm o}g_{S\setminus (T\cup T^+)}=0$, and therefore $q_{T,\mathrm o}\tilde\Phi^U(g_{S\setminus (T\cup T^+)})\in C_0(Y)$, which implies that $\lim|\tilde\Phi^U(g_{S\setminus (T\cup T^+)})(y_n)|=0$, and therefore 
\[
\liminf |\tilde\Phi^U(g_{T\cup T^+})(y_n)|\geq\varepsilon.
\]
Since $q_{T\cup T^+,\mathrm e}$ lifts $\Phi^U(\pi_X(g_{T\cup T^+,\mathrm e}))$ and $g_{T\cup T^+,\mathrm e}g_{T\cup T^+}=g_{T\cup T^+}$, we have that $\liminf q_{T\cup T^+,\mathrm e}(y_n)=1$. As all the $q_{\ell,\mathrm e}$ are pairwise orthogonal, for every large enough $n$ there is $\ell_n$ such that $q_{\ell_n,\mathrm e}(y_n)\geq 2/3$, i.e., $y_n\in Y_\mathbb N^\mathrm e$. This is a contradiction. The proof for $i=\mathrm o$ is exactly the same.
\end{proof}

We modify $\tilde \Phi^U$ in the following way: if $g\in C_h(X)$ is supported on $\bigcup X_n^i$ for some $i\in\{\mathrm e,\mathrm o\}$, we let $S_g=\{n\mid g\restriction X_n^i\neq 0\}$, and we redefine $\tilde\Phi^U(g)$ by $\chi_{Y_{S_g}^i}\tilde\Phi^U(g)$.  By Claim~\ref{claim:rightsupport}, this redefined $\tilde\Phi^U$ is still a lift for $\Phi^U$. This function is coordinate preserving in the following sense: if $g$ and $g'$ are both supported on $\bigcup X_{n}^i$ and $g_S=_{C_0(X)}g'_S$ for some $S\subseteq\mathbb N$ then $\tilde\Phi^U(g)\restriction Y_S^i=_{C_0(Y)}\tilde\Phi^U(g')\restriction Y_S^i$. This shows that if $g\in C_h(X)$ is supported on $\bigcup X_n^i$, then 
\[
\Phi^U(\pi_X(g))\in \prod_n \mathbb D^{Y_n^i}/\bigoplus \mathbb D^{Y_n^i}
\]
and that, for every $S\in \mathcal P(\mathbb N)/\Fin$ and $g,g'$ supported on $\bigcup X_n^i$,
\[
\text{ if } \pi_X(g)_S=\pi_X(g')_S\text{ then } \Phi^U(\pi_X(g))_S= \Phi^U(\pi_X(g'))_S.
\]
We are tempted to use Theorem~\ref{thm:liftben} at this moment. The only problem is that, if one looks at the set of slowly oscillating functions supported on $\bigcup X_n^i$ for a fixed $i\in\{\mathrm e,\mathrm o\}$, this is not a reduced product. For this reason, we return to the sets of the form $M_f$, for $f\in\mathbb N^{\mathbb N\uparrow}$, given after Definition~\ref{def:productformspseq}. Fix $i\in\{\mathrm e,\mathrm o\}$ and let
\[
M_{n,m}=M_{n,m,X_n^i}, \,\,\, M_{f}=M_{f,\bar X^i} (\text{for } f\in \N^{\N\uparrow}) \text{, and }N_{f}=M_{f}/\bigoplus M_{n,f(n)}.
\]
Note that $\{g\in C_h(X)\mid \supp(g)\subseteq\bigcup X_n^i\}=\bigcup_{f\in\N^{\N\uparrow}}M_{f}$. We will thus apply Theorem~\ref{thm:liftben} to 
\[
\Phi_f=\Phi^U\restriction N_f\colon N_f\to \prod\mathbb D^{Y_n^i}/\bigoplus \mathbb D^{Y_n^i}.
\]
and then uniformize the liftings using an $\OCA$ argument.

Fix $f\in\N^{\N\uparrow}$. By Claim~\ref{claim:rightsupport}, $\Phi_f$ is coordinate fixing, meaning that the hypotheses of Theorem~\ref{thm:liftben} are satisfied. $\OCA$, $\MA$, and Theorem~\ref{thm:liftben} give us functions $\alpha_{f,n}\colon M_{n,f(n)}\to \mathbb D^{Y^i_n}$ such that 
\[
\prod_n\alpha_{f,n}\colon M_{f}\to \prod_n \mathbb D^{Y_n^i}
\]
lifts $\Phi_f$. 
\begin{claim}
For every $f,f'\in\N^{\N\uparrow}$ and $\varepsilon>0$ there is $n=n(f,f',\varepsilon)$ such that for all $m\geq n$ and $g\in M_{f,m}\cap M_{f',m}$ we have that $\norm{\alpha_{f,m}(g)-\alpha_{f',m}(g)}<\varepsilon$.
\end{claim}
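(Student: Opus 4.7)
The plan is to prove the claim by a direct contradiction argument, rather than invoking $\OCA$ a second time; the set-theoretic content is already packaged in the existence of the $\alpha_{f,n}$ from Theorem~\ref{thm:liftben}. First I would negate the conclusion: suppose there exist $f,f'\in\N^{\N\uparrow}$, $\varepsilon>0$, a strictly increasing sequence $(m_k)$ of naturals, and contractions
\[
g_{m_k}\in M_{m_k,f(m_k)}\cap M_{m_k,f'(m_k)}
\]
with $\norm{\alpha_{f,m_k}(g_{m_k})-\alpha_{f',m_k}(g_{m_k})}\geq\varepsilon$ for every $k$.

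Next I would glue these local counterexamples into a single slowly oscillating witness. Define $g_n$ by $g_n=g_{m_k}$ if $n=m_k$ and $g_n=0$ otherwise, so that $g_n\in M_{n,f(n)}\cap M_{n,f'(n)}$ for every $n$. Setting $g=\sum_n g_n\in\ell_\infty(X)$ we get $g\in M_f\cap M_{f'}$, and Proposition~\ref{prop:Mf}\eqref{propMf4} gives $g\in C_h(X)$ with $\supp(g)\subseteq\bigcup_n X_n^i$.

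Now comes the key observation: both $\prod_n\alpha_{f,n}$ and $\prod_n\alpha_{f',n}$ serve as lifts of the same map $\Phi^U\circ\pi_X$ evaluated at $g$. Indeed, $\prod_n\alpha_{f,n}$ was chosen to lift $\Phi_f=\Phi^U\restriction N_f$, and by Claim~\ref{claim:rightsupport} together with the finiteness of each $Y_n^i$, the reduced product $\prod_n\mathbb D^{Y_n^i}/\bigoplus_n\mathbb D^{Y_n^i}$ injects canonically into $\ell_\infty(Y)/c_0(Y)$ (a block-supported function lies in $c_0(Y)$ iff its sup norm on each $Y_n^i$ goes to $0$). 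Hence $\pi_Y(\sum_n\alpha_{f,n}(g_n))=\Phi^U(\pi_X(g))$ in $\ell_\infty(Y)/c_0(Y)$, and symmetrically for $f'$. Subtracting,
\[
\sum_n \bigl(\alpha_{f,n}(g_n)-\alpha_{f',n}(g_n)\bigr)\in c_0(Y).
\]
Since the sets $Y_n^i$ are pairwise disjoint, the restriction of this difference to $Y_n^i$ has norm $\norm{\alpha_{f,n}(g_n)-\alpha_{f',n}(g_n)}$, which must therefore tend to $0$ as $n\to\infty$. But at each $n=m_k$ this norm is at least $\varepsilon$, giving the desired contradiction.

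The only delicate point is the identification between the two ambient quotients, namely that the reduced product $\prod_n\mathbb D^{Y_n^i}/\bigoplus_n\mathbb D^{Y_n^i}$ produced by Theorem~\ref{thm:liftben} embeds faithfully into $\ell_\infty(Y)/c_0(Y)$; this is where finiteness of the $Y_n^i$ and the preliminary truncation of $\tilde\Phi^U$ performed via Claim~\ref{claim:rightsupport} do all the work. Once this identification is fixed, the argument is a one-line pigeonhole obstruction and no $\OCA$ uniformization is needed at this stage.
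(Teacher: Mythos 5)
Your proof is correct and takes essentially the same approach as the paper: negate the conclusion, glue the local counterexamples into a single $g\in M_f\cap M_{f'}$ using Proposition~\ref{prop:Mf}, and derive a contradiction from the fact that $\prod_n\alpha_{f,n}$ and $\prod_n\alpha_{f',n}$ both lift $\Phi^U$ on $g$ so their difference must vanish at infinity. Your explicit remarks about the embedding of the reduced product into $\ell_\infty(Y)/c_0(Y)$ and the block-norm characterization of $c_0(Y)$ are just spelled-out versions of what the paper leaves implicit.
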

\begin{proof}
Suppose not, and fix an infinite sequence $(n_k)$ such that for every $k$ there is $g_k\in M_{n_k,f(n_k)}\cap M_{n_k,f'(n_k)}$ such that $\norm{\alpha_{f,n_k}(g_k)-\alpha_{f',n_k}(g_k)}>\varepsilon$. Let $g=\sum g_k$, and note that $g\in M_f\cap M_{f'}$. Since $(n_k)$ is an infinite sequence, we have that $\prod_n\alpha_{f,n}(g)-\prod_n\alpha_{f',n}(g)\notin C_0(Y)$. This contradicts that both $\prod_n\alpha_{f,n}(g)$ and $\prod_n\alpha_{f',n}(g)$ lift $\Phi(\pi_X(g))$.
\end{proof}

For a fixed $\bar X$ and $\varepsilon>0$, colour $[\N^{\N\uparrow}]^2=K_0^\varepsilon\sqcup K_1^\varepsilon$ by $\{f,f'\}\in K_0^{\varepsilon}$ if and only if
\[
\exists n \exists g\in M_{n,f(n)}\cap M_{n,f'(n)} (\norm{\alpha_{f,n}(g)-\alpha_{f',n}(g)}>\varepsilon).
\]
Let $\mathrm{Maps}(\mathbb D^{X_n^i},\mathbb D^{Y_n^i})$ be the set of all maps from $\mathbb D^{X_n^i}$ to $\mathbb D^{Y_n^i}$. This is a Polish space with the $\sup$-distance, and so we can endow $\prod_n\mathrm{Maps}(\mathbb D^{X_n^i},\mathbb D^{Y_n^i})$ a Polish topology. Looking at $K_0^\varepsilon$ as a subset of $[\prod_n\mathrm{Maps}(\mathbb D^{X_n^i},\mathbb D^{Y_n^i})\times \N^{\N\uparrow}]^2$ shows that $K_0^\varepsilon$ is an open colouring, and therefore we can apply $\OCA$ to it. (This should be compared with \cite[Lemma 6.7]{mckenney2018forcing} or \cite[Theorem 17.8.2]{Fa:STCstar}.)

\begin{claim}\label{claim:uniformising}
For every $\varepsilon>0$ there is no uncountable $K_0^\varepsilon$-homogeneous set.
\end{claim}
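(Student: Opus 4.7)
I argue by contradiction. Suppose $\mathcal A \subseteq \mathbb N^{\mathbb N\uparrow}$ is an uncountable $K_0^\varepsilon$-homogeneous set. Since $\OCA$ implies $\mathfrak b = \omega_2 > \omega_1$, Lemma~\ref{lem:reverseorder} yields $\bar f \in \mathbb N^{\mathbb N\uparrow}$ with $\bar f \leq^* f$ for every $f \in \mathcal A$. Pigeonhole on the finite initial data recording where $\bar f$ starts dominating each $f$ lets me refine $\mathcal A$, still uncountable, so that $\bar f(n) \leq f(n)$ for all $n$; in particular $M_{n, f(n)} \subseteq M_{n, \bar f(n)}$ throughout. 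A further pigeonhole on $\mathbb N^2$ uniformizes the parameters $(m, \bar n)$ provided by Proposition~\ref{prop:them1} for the lifting $\sum_n \alpha_{f, n}$: each $\alpha_{f, n}$ is then approximately linear, multiplicative, and contractive on $M_{n, m}$ for $n \geq \bar n$, with error below $\varepsilon/10$.

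Next I exploit a coherence property. For distinct $f, f' \in \mathcal A$, the product-form lifts $\sum_n \alpha_{f, n}$ and $\sum_n \alpha_{f', n}$ both lift $\Phi^U$ on the common domain $M_f \cap M_{f'} = M_{\max(f, f')}$. Hence $\|\alpha_{f, n}(g_n) - \alpha_{f', n}(g_n)\| \to 0$ as $n \to \infty$ for every $(g_n) \in M_{\max(f, f')}$. A diagonal extraction---concentrating any persistent disagreement to a single coordinate at a time and assembling the local witnesses into a global slowly oscillating function---upgrades this to
\[
\sup_{g \in M_{n, \max(f(n), f'(n))}} \|\alpha_{f, n}(g) - \alpha_{f', n}(g)\| \longrightarrow 0 \text{ as } n \to \infty,
\]
so the threshold $N(f, f') := \min\{N : \forall n \geq N, \text{ the above sup is } < \varepsilon/3\}$ is finite for every pair in $[\mathcal A]^2$.

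The final step combines pigeonhole with a Ramsey-type selection. By pigeonhole on $\mathbb N$, uncountably many pairs share a common threshold bound $N^*$; a secondary set-theoretic argument (applying $\OCA$ to a suitable open approximation of the condition $N(f, f') \leq N^*$, or alternatively an $\MA$-style refinement) extracts an uncountable $\mathcal A^* \subseteq \mathcal A$ on which every pair satisfies $N(f, f') \leq N^*$. Refining once more so that $f \restriction \{0, \dots, N^*-1\}$ is constant on $\mathcal A^*$, the finite tuple $(\alpha_{f, n})_{n < N^*}$ lives in a finite product of compact metric spaces---using the approximate $1$-Lipschitz property from the first step---so by separability distinct $f, f' \in \mathcal A^*$ exist with $\|\alpha_{f, n} - \alpha_{f', n}\|_\infty < \varepsilon/3$ on the common domain for every $n < N^*$. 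Combined with the threshold bound for $n \geq N^*$, this forces $\{f, f'\} \in K_1^\varepsilon$, contradicting the homogeneity assumption. The main obstacle is this Ramsey step---upgrading ``uncountably many pairs with bounded threshold'' to ``an uncountable set with pairwise bounded threshold''---which is delicate because the threshold condition is naturally $G_\delta$ rather than open, and it is here that I expect to need both $\OCA$ and $\MA$ to interact nontrivially with the coherence observation above.
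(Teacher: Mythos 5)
Your proof is on the right track structurally---locating $\bar f$ via Lemma~\ref{lem:reverseorder}, reconstructing the coherence claim, and closing with a separability argument on the finitely many initial coordinates---but the ``Ramsey step'' you flag as the main obstacle is a genuine gap, and neither of your suggested remedies is available. Pigeonhole over pairs gives uncountably many pairs sharing a common threshold bound $N^*$, but that does not yield an uncountable \emph{set} on which every pair has $N(f,f')\leq N^*$; and you cannot usefully reapply $\OCA$ here, since the condition $N(f,f')>N^*$ is essentially the open colouring $K_0^{\varepsilon/3}$ restricted to large coordinates, so the $\OCA$ dichotomy would simply hand back the homogeneity alternative you are trying to refute. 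No two-variable Ramsey extraction is needed.

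The idea you are missing is to pivot every comparison through the single bounding function $\bar f$, so that the pigeonhole becomes one-variable and hence trivial. Apply the coherence claim to the pair $(\bar f,f)$: this produces a threshold $n(\bar f,f,\varepsilon/2)$ that depends on $f$ \emph{alone}, so a straightforward pigeonhole over the uncountable set $\mathcal A$ yields an uncountable refinement on which this threshold is a single constant $\bar n$ (which you may also enlarge so that $\bar n\geq n_0$, the stage past which $\bar f\leq f$ holds for all $f$ in the refined $\mathcal A$). Now for $f,f'\in\mathcal A$ and any $n>\bar n$ with $g\in M_{n,f(n)}\cap M_{n,f'(n)}$, the domination $\bar f(n)\leq f(n),f'(n)$ places $g\in M_{n,\bar f(n)}$, so the triangle inequality through $\alpha_{\bar f,n}$ gives
\[
\norm{\alpha_{f,n}(g)-\alpha_{f',n}(g)}\leq\norm{\alpha_{f,n}(g)-\alpha_{\bar f,n}(g)}+\norm{\alpha_{\bar f,n}(g)-\alpha_{f',n}(g)}<\varepsilon/2+\varepsilon/2=\varepsilon.
\]
Combined with your separability refinement for $n\leq\bar n$, this contradicts $\{f,f'\}\in K_0^\varepsilon$ directly. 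Note also that the appeal to Proposition~\ref{prop:them1} (approximate linearity, multiplicativity, Lipschitzness) is superfluous for this claim; the coherence claim and the $\bar f$-pivot are all that is needed.
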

\begin{proof}
Suppose that $\mathcal A$ is $K_0^\varepsilon$-homogeneous and of size $\aleph_1$. Since $\mathfrak b>\omega_1$ we can find $\bar f$ such that $\bar f\leq^* f$ for all $f\in\mathcal A$ (Lemma~\ref{lem:reverseorder}). By refining $\mathcal A$, we can assume there is $n_0$ such that $\bar f(n)\leq f(n)$ whenever $n\geq n_0$ and $f\in\mathcal A$. A further refinement of $\mathcal A$ allows us to suppose that the natural $\bar n=n(\bar f,f,\varepsilon/2)$ is the same for all $f\in \mathcal A$. We enlarge it so that $\bar n\geq n_0$. Since $\prod_{n\leq\bar n}\{f\colon \mathbb D^{X_n^i}\to\mathbb D^{Y_n^i}\}$ is separable (endowed with the product topology), we can assume that for all $f,f'\in\mathcal A$ and $n\leq\bar n$ we have that $\norm{\alpha_{f,n}-\alpha_{f',n}}<\varepsilon$. This is our final refinement. 

Pick now $f,f'\in\mathcal A$, and find $n$ and $g\in M_{n,f(n)}\cap M_{n,f'(n)}$ witnessing that $\{f,f'\}\in K_0^\varepsilon$. By our choice of $\mathcal A$, $n>\bar n\geq n_0$, and therefore $\bar f(n)\leq f(n),f'(n)$, meaning that $g\in M_{n,\bar f(n)}$. By our choice of $\bar n$, we have that 
\begin{align*}
\norm{\alpha_{f,n}(g)-\alpha_{f',n}(g)}&\leq\norm{\alpha_{f,n}(g)-\alpha_{\bar f,n}(g)}+\norm{\alpha_{\bar f,n}(g)-\alpha_{f',n}(g)}\\&< \varepsilon/2+\varepsilon/2=\varepsilon.
\end{align*}
This shows that $\{f,f'\}\notin K_0^\varepsilon$, contradicting the homogeneity of $\mathcal A$.
\end{proof}
By applying $\OCA$ to the open colouring $K_0^\varepsilon\subseteq [\bbN^\bbN]^2$, we can find sets $\mathcal Y_{n,k}$ such that each $\mathcal Y_{n,k}$ is $K_1^{2^{-k}}$-homogeneous and $\N^{\N\uparrow}=\bigcup_n\mathcal Y_{n,k}$. 

The following uniformization proof is similar (in spirit) to the one of \cite[Theoremn 17.8.2]{Fa:STCstar} (see also \cite{Fa:All}).
\begin{claim}\label{claim1}
There are sets $\mathcal Y_k\subseteq\N^{\N\uparrow}$ with $\mathcal Y_k\supseteq\mathcal Y_{k+1}$ and an increasing sequence of naturals $(n_k)$ such that each $\mathcal Y_k$ is $K_1^{2^{-k}}$-homogeneous and
\[
\forall k \forall f\in\N^{\N\uparrow}\exists f'\in \mathcal Y_k (\forall n\text{ if } f(n)\geq n_k \text{ then } f'(n)<f(n)).
\]
\end{claim}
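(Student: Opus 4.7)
The plan is to build $\mathcal Y_k$ and $n_k$ by recursion on $k$, starting from the observation that $\N^{\N\uparrow}$ itself is $1$-cofinal (take $f'(i) = \max\{0, f(i) - 1\}$, which lies in $\N^{\N\uparrow}$). At stage $k$, assuming $\mathcal Y_{k-1}$ has been constructed and is $n_{k-1}$-cofinal and $K_1^{2^{-(k-1)}}$-homogeneous, I would decompose
\[
\mathcal Y_{k-1} = \bigcup_n (\mathcal Y_{k-1} \cap \mathcal Y_{n,k}),
\]
each piece being $K_1^{2^{-k}}$-homogeneous by inheritance from $\mathcal Y_{n,k}$. The goal is to select $n(k)$ such that $\mathcal Y_k := \mathcal Y_{k-1} \cap \mathcal Y_{n(k),k}$ is $n_k$-cofinal for some $n_k \geq n_{k-1} + 1$; iterating yields the desired nested sequence.

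The core of the argument is a uniformization sublemma: if $\mathcal Z \subseteq \N^{\N\uparrow}$ is $n'$-cofinal and $\mathcal Z = \bigcup_n \mathcal Z_n$, then some $\mathcal Z_n$ is $n''$-cofinal for some $n''$. The intended proof is by contradiction. Assuming each $\mathcal Z_n$ fails to be $m$-cofinal for every $m$, one selects witnesses $f_{n,m} \in \N^{\N\uparrow}$: for each $f' \in \mathcal Z_n$ there is some $i$ with $f'(i) \geq f_{n,m}(i) \geq m$. Partition $\N$ into pairwise disjoint infinite sets $\{I_{n,m}\}_{(n,m) \in \N^2}$ and define $\bar f \in \N^{\N\uparrow}$ by $\bar f \restriction I_{n,m} = f_{n,m} \restriction I_{n,m}$ for each pair; this is in $\N^{\N\uparrow}$ since each $f_{n,m}$ is. Applying the cofinality of $\mathcal Z$ to $\bar f$ yields $f' \in \mathcal Z$ with $f'(i) < \bar f(i)$ on $\{i : \bar f(i) \geq n'\}$. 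Picking $n^*$ with $f' \in \mathcal Z_{n^*}$ and any $m \geq n'$, the witness $f_{n^*,m}$ produces some $i^*$ with $f'(i^*) \geq f_{n^*,m}(i^*) \geq m$; if $i^* \in I_{n^*,m}$, this contradicts $f'(i^*) < \bar f(i^*) = f_{n^*,m}(i^*)$.

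The main obstacle is to force the witnessing index $i^*$ to land inside the pre-assigned $I_{n^*,m}$. In general the non-$m$-cofinality witness only delivers some $i^*$, not necessarily in $I_{n^*,m}$, so the naive partition does not directly close the contradiction. The natural resolution is to strengthen the notion of witness: show that whenever $\mathcal Z_n$ fails to be $m$-cofinal, the failure can be arranged to occur on any prescribed infinite set $I \subseteq \N$. Concretely, one replaces $f_{n,m}$ with a modified function that agrees with $f_{n,m}$ on $I$ and grows just fast enough outside $I$ to remain in $\N^{\N\uparrow}$, while verifying that every $f' \in \mathcal Z_n$ still has a witnessing index inside $I$. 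The technical crux of the proof is establishing precisely this combinatorial refinement of the witness. Once it is in place, the diagonal argument closes immediately, and the recursion produces the nested sequence $(\mathcal Y_k, n_k)$ by standard bookkeeping (setting $n_k$ at least as large as both $n_{k-1}+1$ and the witness level delivered by the sublemma).
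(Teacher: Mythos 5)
Your overall plan (recursion on $k$, decomposing $\mathcal Y_{k-1}=\bigcup_n(\mathcal Y_{k-1}\cap\mathcal Y_{n,k})$, extracting a cofinal piece) matches the shape of the paper's proof, and the ``uniformization sublemma'' you isolate is exactly the combinatorial heart of the matter. The gap is in the proof of the sublemma, and unfortunately the fix you sketch for the obstacle you identified is not only unproven but false.

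The problem is the ``strengthened witness'' step. You want: whenever $\mathcal Z_n$ fails to be $m$-cofinal, one can choose a witness $g$ so that for every $f'\in\mathcal Z_n$ the index where $f'(i)\geq g(i)\geq m$ can be taken inside a prescribed infinite $I\subseteq\N$. This is not true. Fix $h\in\N^{\N\uparrow}$ and take $\mathcal Z_n=\{f'\in\N^{\N\uparrow}:f'\restriction(2\N+1)=h\restriction(2\N+1)\}$ and $I=2\N$. Then $\mathcal Z_n$ fails $m$-cofinality (the function $h$ itself witnesses this, with witnessing indices at odd positions). But for any candidate $g\in\N^{\N\uparrow}$, define $f'(2j)=g(2j)-1$ when $g(2j)\geq m$ and $f'(2j)=g(2j)$ otherwise, and let $f'$ agree with $h$ on the odds; then $f'\in\mathcal Z_n$ and for every even $i$ either $g(i)<m$ or $f'(i)<g(i)$, so $g$ has no witnessing index in $I$ for this $f'$. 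In other words, the ``failure of cofinality'' lives on the odds by construction and cannot be pushed onto the evens. So the naive block-gluing diagonalization cannot be repaired in the way you propose. (A secondary issue you should also address: the glued $\bar f$ need not lie in $\N^{\N\uparrow}$ for an arbitrary partition $\{I_{n,m}\}$ of $\N$ into infinite blocks, since the sets $\{i\in I_{n,m}:f_{n,m}(i)\leq L\}$ can sum over $(n,m)$ to an infinite set.)

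The paper avoids this difficulty entirely: it does not glue witnesses along a partition of indices. Instead, it works with the $\sigma$-directed poset $(\N^{\N\uparrow},R)$, where $fRg$ iff $g\leq^*f$ and $R=\bigcup_n R_n$, and cites Farah's lemma that a $\sigma$-directed poset cannot be partitioned into countably many non-cofinal pieces, together with the recursive refinement technique of Lemma~6.8 of \cite{mckenney2018forcing} to upgrade $R$-cofinality of a piece to $R_{n_k}$-cofinality and to iterate while preserving nestedness and homogeneity. The essential mechanism there is to take $\leq^*$-lower bounds of countably many functions at once (using $\sigma$-directedness), rather than matching witnesses block by block; this sidesteps the witness-localization problem your construction runs into. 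If you want to complete your argument, you should replace the block partition of $\N$ by a common $\leq^*$-lower bound of the family $\{f_{n,m}\}$, and then the crux becomes showing how the cofinality parameter $n_k$ can be bounded — this is precisely the content of the cited recursive argument, and it is not as elementary as the sketch suggests.
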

\begin{proof}
Consider the poset $(\mathbb N^{\mathbb N\uparrow},R)$, where $fRg$ if and only if $g\leq^* f$. This is a $\sigma$-directed poset (i.e., every countable subset of it has an upper bound). Furthermore, $R=\bigcup R_n$ where we define $R_n$ by $fR_n g$ if and only if for all $k$, if $f(k)\geq m$ then $g(k)\leq f(k)$. A recursive argument (see e.g. the proof of Lemma 6.8 in \cite{mckenney2018forcing}), using that if a $\sigma$-directed poset is partitioned into countably many pieces then one of such pieces must be cofinal (\cite[Lemma 2.2.2]{FarahBook2000}), allows us to construct an increasing sequence of naturals $(n_k)$ and sets $\mathcal Y_k$ such that each $\mathcal Y_k$ is $K_1^{2^{-k}}$-homogeneous and $\mathcal Y_k$ is $R_{n_k}$-cofinal. This is the thesis.
\end{proof}
Fix a sequence of naturals $(n_k)$ as provided by Claim~\ref{claim1}, and let $n_{-1}=0$. We are ready to define a map $\Lambda=\prod\alpha_n\colon \prod_n\mathbb D^{X_n^i}\to \prod_n\mathbb D^{Y_n^i}$ which lifts $\Phi^U$ on contractions in $C_h(X)$ supported on $\bigcup X_n^i$.

 Fix $n\in\mathbb N$ and a contraction $g\in C_h(X)$ supported on $X_n^i$. If $g$ is constant, just send $g$ to the constant function (with the same value as $g$ has) on $Y_n$. If not, since $g\in M_{n,0}$, $M_{n,m}\subseteq M_{n,m'}$ whenever $m'\leq m$, and $\bigcap_m M_{n,m}\subseteq\{g\mid g\text{ is constant}\}$, there is a maximal $m=m_g$ such that $g\in M_{n,m}\setminus M_{n,m+1}$. Let $k$ be such that $n_k\leq m_g<n_{k+1}$, and find $f=f_{g,n}\in\mathcal Y_k$ such that $f(n)\leq m_g$, so that $g\in M_{n,f(n)}$. Let 
 \[
 \alpha_n(g)=\alpha_{f,n}(g).
 \]
\begin{claim}
$\Lambda=\sum\alpha_n$ lifts $\Phi^U$ on $\{g\in C_h(X)\mid\supp(g)\subseteq \bigcup X_n^i\}$.
\end{claim}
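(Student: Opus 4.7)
The plan is to compare $\Lambda(g) = \sum_n \alpha_n(g_n)$ with a known lift. By clause (3) of Proposition~\ref{prop:Mf}, pick $f^* \in \mathbb{N}^{\mathbb{N}\uparrow}$ with $g \in M_{f^*}$, so that $\sum_n \alpha_{f^*,n}(g_n)$ already lifts $\Phi^U(\pi_X(g))$ by construction. Since the $Y_n^i$ are pairwise disjoint and $\alpha_n(g_n), \alpha_{f^*,n}(g_n) \in \mathbb{D}^{Y_n^i}$, it suffices to show that $\norm{\alpha_n(g_n) - \alpha_{f^*,n}(g_n)} \to 0$ as $n \to \infty$; the difference will then lie in $c_0(Y) = C_0(Y)$, yielding $\pi_Y(\Lambda(g)) = \Phi^U(\pi_X(g))$.

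Fix $\varepsilon > 0$ and $k$ with $2^{-k} < \varepsilon/2$. Using the $R_{n_k}$-cofinality of $\mathcal{Y}_k$ provided by Claim~\ref{claim1}, choose $f' \in \mathcal{Y}_k$ such that $f'(n) \leq f^*(n)$ whenever $f^*(n) \geq n_k$. Since $f^*(n) \to \infty$, we have $f'(n) \leq f^*(n)$ past some index $N_0$, and the truncation $g'$ obtained by zeroing $g_n$ for $n < N_0$ lies in both $M_{f^*}$ and $M_{f'}$. Hence both $\prod_n \alpha_{f^*,n}$ and $\prod_n \alpha_{f',n}$ are liftings of $\Phi^U(\pi_X(g'))$, and so their coordinatewise difference lies in $\bigoplus_n \mathbb{D}^{Y_n^i}$. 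This forces $\norm{\alpha_{f^*,n}(g_n) - \alpha_{f',n}(g_n)} \to 0$ as $n \to \infty$.

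Since $m_{g_n} \geq f^*(n) \to \infty$, for all sufficiently large $n$ we have $m_{g_n} \geq n_k$, hence $k_n \geq k$ and $f_{g_n,n} \in \mathcal{Y}_{k_n} \subseteq \mathcal{Y}_k$. Applying the $K_1^{2^{-k}}$-homogeneity of $\mathcal{Y}_k$ to the pair $\{f', f_{g_n,n}\}$, together with the fact that $g_n \in M_{n,f'(n)} \cap M_{n,f_{g_n,n}(n)}$, yields $\norm{\alpha_n(g_n) - \alpha_{f',n}(g_n)} = \norm{\alpha_{f_{g_n,n},n}(g_n) - \alpha_{f',n}(g_n)} \leq 2^{-k} < \varepsilon/2$. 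Combined with the previous paragraph via the triangle inequality, $\norm{\alpha_n(g_n) - \alpha_{f^*,n}(g_n)} < \varepsilon$ for all sufficiently large $n$. Since $\varepsilon$ was arbitrary, this completes the proof. The main obstacle, the $n$-dependence of the choice $f_{g_n,n}$, is bypassed by routing through an intermediate $f' \in \mathcal{Y}_k$ supplied by cofinality: this brings $f_{g_n,n}$ and the reference $f^*$ into the same $K_1^{2^{-k}}$-homogeneous set, up to an error controlled by $2^{-k}$.
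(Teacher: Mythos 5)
Your proof is correct and follows essentially the same strategy as the paper's: compare $\alpha_n(g_n)$ to $\alpha_{f,n}(g_n)$ for a reference $f$ in the homogeneous set $\mathcal{Y}_k$, using that $m_{g_n}\to\infty$ forces $f_{g_n,n}\in\mathcal{Y}_k$ eventually, then let $k\to\infty$. The only difference is cosmetic: the paper directly picks a single $f\in\mathcal{Y}_k$ with $g\in M_f$, whereas you route through $f^*$ and an intermediate $f'\in\mathcal{Y}_k$ obtained from $R_{n_k}$-cofinality and then compare $\alpha_{f^*,n}$ with $\alpha_{f',n}$ via both being lifts on a truncated tail — which in fact makes explicit a small point the paper glosses over (namely that $f'$ need only dominate $f^*$ eventually, so one must pass to the tail of $g$).
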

\begin{proof}
Fix a slowly oscillating contraction $g$ supported on $\bigcup X_n^i$, and write $g=\sum g_n$ where $\supp(g_n)\subseteq X_n^i$ for all $n$. Fix $k>0$, and let $f$ be such that $g\in M_f$ and $f\in \mathcal Y_k$, so that $\sum_n\alpha_{f,n}(g_n)$ lifts $\Phi(g)$. We will show that
\[
\norm{\pi_Y(\sum_n\alpha_{f,n}(g_n)-\Lambda(g))}=\limsup\norm{\alpha_{f,n}(g_n)-\alpha_n(g_n)}<2^{-k}.
\]
Since $f\in\N^{\N\uparrow}$, we can find $n$ such that $f(m)\geq n_k$ for all $m\geq n$. For every $m\geq n$, we have that the natural $k_{g_m}$ is greater than or equal to $k$, and therefore $f_{g_m,m}\in\mathcal Y_{k_{g_m}}\subseteq\mathcal Y_k$, which gives that
\[
\norm{\alpha_m(g_m)-\alpha_{f,m}(g_m)}=\norm{\alpha_{f_{g_m,m},m}(g_m)-\alpha_{f,m}(g_m)}<2^{-k}.
\]
Since this happens for every $m\geq n$, we have that 
\[
\limsup\norm{\alpha_{f,n}(g_n)-\alpha_n(g_n)}\leq 2^{-k}.
\]
As $k$ is arbitrary, this gives the thesis.
\end{proof}
The last claim shows that $\Phi^U$ is trivial on the sparse sequence $\bar X^i$. Since $i$ is arbitrary, this concludes the proof.
\end{proof}

\bibliographystyle{amsplain}
\bibliography{bibliography}

\providecommand{\bysame}{\leavevmode\hbox to3em{\hrulefill}\thinspace}
\providecommand{\MR}{\relax\ifhmode\unskip\space\fi MR }
% \MRhref is called by the amsart/book/proc definition of \MR.
\providecommand{\MRhref}[2]{%
  \href{http://www.ams.org/mathscinet-getitem?mr=#1}{#2}
}
\providecommand{\href}[2]{#2}
\begin{thebibliography}{10}

\bibitem{ARS}
U.~Abraham, M.~Rubin, and S.~Shelah, \emph{On the consistency of some partition
  theorems for continuous colorings, and the structure of {$\aleph\sb 1$}-dense
  real order types}, Ann. Pure Appl. Logic \textbf{29} (1985), no.~2, 123--206.

\bibitem{AlvarezCandel}
J.~A. \'Alvarez~L\'opez and A.~Candel, \emph{Algebraic characterization of
  quasi-isometric spaces via the {H}igson compactification}, Topology Appl.
  \textbf{158} (2011), no.~13, 1679--1694.

\bibitem{SquareInventiones}
F.~Baudier, B.~M. Braga, I.~Farah, A.~Khukhro, A.~{Vignati}, and R.~Willett,
  \emph{Uniform {R}oe algebras of uniformly locally finite metric spaces are
  rigid}, Invent. Math. \textbf{230} (2022), no.~3, 1071--1100.

\bibitem{SquareBij}
F.~Baudier, B.~M. Braga, I.~Farah, A.~Vignati, and R.~Willett, \emph{Coarse
  equivalence versus bijective coarse equivalence of expander graphs}, Math. Z.
  \textbf{307} (2024), no.~3, Paper No. 44, 24.

\bibitem{SquareEmb}
\bysame, \emph{Embeddings of von {N}eumann algebras into uniform {R}oe algebras
  and quasi-local algebras}, J. Funct. Anal. \textbf{286} (2024), no.~1, Paper
  No. 110186, 37.

\bibitem{Baum.Aleph1}
J.~E. Baumgartner, \emph{All {$\aleph _{1}$}-dense sets of reals can be
  isomorphic}, Fund. Math. \textbf{79} (1973), no.~2, 101--106. \MR{317934}

\bibitem{Drani.AsymDim}
G.~Bell and A.~Dranishnikov, \emph{Asymptotic dimension}, Topology Appl.
  \textbf{155} (2008), no.~12, 1265--1296.

\bibitem{BragaFarahV.Roecoronas}
B.~M. Braga, I.~Farah, and A.~Vignati, \emph{Uniform {R}oe coronas}, Adv. Math.
  \textbf{389} (2021), Paper No. 107886, 35.

\bibitem{BrianFarah}
W.~Brian and I.~Farah, \emph{Conjugating trivial automorphisms of $\mathcal
  {P}(\mathbb {N})/\mathrm{Fin}$}, arXiv:2410.08789.

\bibitem{TrivIso}
B.~De~Bondt, I.~Farah, and A.~Vignati, \emph{Trivial isomorphisms between
  reduced products}, to appear in Israel Journal of Mathematics,
  arXiv:2307.06731.

\bibitem{TrivIsoMetric}
B.~De~Bondt and A.~Vignati, \emph{A metric lifting theorem}, arXiv:2411.11127,
  to appear in Comptes Rendus de l'Acad{\'e}mie de Sciences - S{\'e}rie
  Math{\'e}matique.

\bibitem{dow2014non}
A.~Dow, \emph{A non-trivial copy of {$\beta N \backslash N$}}, Proc. Amer.
  Math. Soc. \textbf{142} (2014), no.~8, 2907--2913.

\bibitem{Drani.HigsonRoe}
A.~N. Dranishnikov and S.~Ferri, \emph{On the {H}igson-{R}oe corona}, Uspekhi
  Mat. Nauk \textbf{52} (1997), no.~5(317), 133--146.

\bibitem{EmersonMeyer.Dual}
H.~Emerson and R.~Meyer, \emph{Dualizing the coarse assembly map}, J. Inst.
  Math. Jussieu \textbf{5} (2006), no.~2, 161--186.

\bibitem{EmersonMeyer.Dirac}
\bysame, \emph{A descent principle for the {D}irac--dual-{D}irac method},
  Topology \textbf{46} (2007), no.~2, 185--209.

\bibitem{FarahBook2000}
I.~Farah, \emph{Analytic quotients: theory of liftings for quotients over
  analytic ideals on the integers}, Mem. Amer. Math. Soc. \textbf{148} (2000),
  no.~702, xvi+177. \MR{1711328}

\bibitem{Fa:All}
\bysame, \emph{All automorphisms of the {C}alkin algebra are inner}, Ann. of
  Math. (2) \textbf{173} (2011), 619--661.

\bibitem{Fa:STCstar}
\bysame, \emph{Combinatorial set theory and \cstar-algebras}, Springer
  Monographs in Mathematics, Springer, 2019.

\bibitem{CoronaRigidity}
I.~Farah, S.~Ghasemi, A.~Vaccaro, and A.~Vignati, \emph{Corona rigidity},
  arXiv:2201.11618.

\bibitem{FukayaOguni}
T.~Fukaya and S.-I. Oguni, \emph{Coronae of product spaces and the coarse
  baum--connes conjecture}, Advances in Mathematics \textbf{279} (2015),
  201--233.

\bibitem{Gromov93}
M.~Gromov, \emph{Asymptotic invariants of infinite groups}, Geometric group
  theory, {V}ol. 2, LMS Lecture Note Ser., vol. 182, Cambridge Univ. Press,
  1993, pp.~1--295.

\bibitem{Higson.ExtDuality}
N.~Higson, \emph{{$C^*$}-algebra extension theory and duality}, J. Funct. Anal.
  \textbf{129} (1995), no.~2, 349--363.

\bibitem{MartinezVigolo}
D.~Mart{\'\i}nez and F.~Vigolo, \emph{\cstar-rigidity of bounded geometry
  metric spaces}, arXiv:2501.03128.

\bibitem{mckenney2018forcing}
P.~McKenney and A.~Vignati, \emph{Forcing axioms and coronas of
  {$\mathrm{C}^*$}-algebras}, J. Math. Log. \textbf{21} (2021), no.~2, Paper
  No. 2150006, 73.

\bibitem{MineYamashita}
K.~Mine and A.~Yamashita, \emph{Metric compactifications and coarse
  structures}, Canad. J. Math. \textbf{67} (2015), no.~5, 1091--1108.

\bibitem{Pa:Universal}
I.I. Parovi\v{c}enko, \emph{A universal bicompact of weight $\aleph$}, Soviet
  Mathematics Doklady \textbf{4} (1963), 592--592.

\bibitem{Phillips-Weaver}
N.~C. Phillips and N.~Weaver, \emph{The {C}alkin algebra has outer
  automorphisms}, Duke Math. J. \textbf{139} (2007), no.~1, 185--202.

\bibitem{Protasov.Corona}
I.~V. Protasov, \emph{Coronas of ultrametric spaces}, Comment. Math. Univ.
  Carolin. \textbf{52} (2011), no.~2, 303--307.

\bibitem{Roe:1988qy}
J.~Roe, \emph{An index theorem on open manifolds, {I}}, J. Differential
  Geometry \textbf{27} (1988), 87--113.

\bibitem{Roe.CoarseCoh}
\bysame, \emph{Coarse cohomology and index theory on complete {R}iemannian
  manifolds}, Mem. Amer. Math. Soc. \textbf{104} (1993), no.~497, x+90.

\bibitem{RoeBook}
\bysame, \emph{Lectures on coarse geometry}, University Lecture Series,
  vol.~31, American Mathematical Society, Providence, RI, 2003. \MR{2007488}

\bibitem{Ru}
W.~Rudin, \emph{Homogeneity problems in the theory of \v{C}ech
  compactifications}, Duke Mathematics Journal \textbf{23} (1956), 409--419.

\bibitem{Sh:PIF}
S.~Shelah, \emph{Proper and improper forcing}, Perspectives in Mathematical
  Logic, Springer, 1998.

\bibitem{ShSte:PFA}
S.~Shelah and J.~Stepr{\=a}ns, \emph{{PFA} implies all automorphisms are
  trivial}, Proceedings of the American Mathematical Society \textbf{104}
  (1988), 1220--1225.

\bibitem{SpakulaWillett2013}
J.~{\v{S}}pakula and R.~Willett, \emph{On rigidity of {R}oe algebras}, Adv.
  Math. \textbf{249} (2013), 289--310. \MR{3116573}

\bibitem{Todorcevic.PPIT}
S.~Todor{\v{c}}evi{\'c}, \emph{Partition problems in topology}, Contemporary
  Mathematics, vol.~84, American Mathematical Society, Providence, RI, 1989.

\bibitem{Velickovic.OCA1}
B.~Veli{\v{c}}kovi{\'c}, \emph{Applications of the open coloring axiom}, Set
  Theory of the Continuum (New York, NY) (Haim Judah, Winfried Just, and Hugh
  Woodin, eds.), Springer US, 1992, pp.~137--154.

\bibitem{Ve:OCA}
\bysame, \emph{{OCA} and automorphisms of {${\mathcal P}(\omega) /\Fin$}}, Top.
  Appl. \textbf{49} (1992), 1--13.

\bibitem{vignati2018rigidity}
A.~Vignati, \emph{Rigidity conjectures for continuous quotients}, Ann. Sci.
  \'{E}c. Norm. Sup\'{e}r. (4) \textbf{55} (2022), no.~6, 1687--1738.

\bibitem{VY:wep}
A.~Vignati and D.~Yilmaz, \emph{The weak {E}xtension {P}rinciple},
  arXiv:2407.20791.

\bibitem{Willett.Thesis}
R.~Willett, \emph{Band-dominated operators and the stable {H}igson corona},
  ProQuest LLC, Ann Arbor, MI, 2009, Thesis (Ph.D.)--The Pennsylvania State
  University.

\bibitem{Willett.Homological}
\bysame, \emph{Some `homological' properties of the stable {H}igson corona}, J.
  Noncommut. Geom. \textbf{7} (2013), no.~1, 203--220.

\end{thebibliography}
\end{document}